\documentclass[11pt, longbibliography, reqno]{amsart}     
\usepackage{amsfonts, amssymb}	
\usepackage[a4paper, margin=2.7cm]{geometry}
\usepackage{amsmath}
\usepackage[colorlinks,linkcolor=blue]{hyperref}
\usepackage[nameinlink]{cleveref}
\usepackage{mathtools}
\usepackage{xcolor}
\usepackage{graphicx}
\usepackage{physics}
\usepackage[dvipsnames]{xcolor}
\usepackage{epstopdf}
\usepackage{amsthm}
\usepackage{verbatim}
\usepackage{pdfpages}
\newcommand{\q}[1]{``#1''}

\usepackage{subcaption}

\theoremstyle{plain}
\newtheorem{theorem}{Theorem}[section]
\newtheorem{corollary}[theorem]{Corollary}
\newtheorem{proposition}[theorem]{Proposition}
\newtheorem{lemma}[theorem]{Lemma}
\theoremstyle{definition}
\newtheorem{definition}[theorem]{Definition}
\newtheorem{example}[theorem]{Example}
\theoremstyle{remark}
\newtheorem{remark}[theorem]{Remark}
\newtheorem*{note*}{Note}
\newtheorem*{remark*}{Remark}

\usepackage{tikz}
\usepackage{tikz-cd}
\usepackage{caption}

\newcommand{\eqn}[0]{\begin{array}{rcl}}
	\newcommand{\eqnend}[0]{\end{array} }  	

\newcommand{\T}{\mathbb{T}}
\newcommand{\D}{\mathbb{D}}
\newcommand{\C}{\mathbb{C}}

\usepackage{bbm} 
\let\svthefootnote\thefootnote
\newcommand\freefootnote[1]{%
  \let\thefootnote\relax%
  \footnotetext{#1}%
  \let\thefootnote\svthefootnote%
}

\title[Higher-dimensional singularities of rational inner functions]{Higher-dimensional singularities of rational inner functions}
\author{Leonora Krajina}
\address{Department of Mathematics, Stockholm University, 106 91 Stockholm, Sweden}
\email{nora.kraj3@gmail.com}
\date{\today}                                           
\subjclass[2020]{32A08, 32A40}
\keywords{Rational inner functions, singularities, derivative integrability, slice matrix}

\begin{document}
\maketitle

    \begin{abstract}
        We study singularities of rational inner functions (RIFs) $\phi$ in three and more variables. We focus on curve and higher-dimensional singularities of $\phi$ in $\T^n$ and 
        we show that, under certain conditions, derivative integrability conditions of partial derivatives of $\phi=\frac{q}{p}$ stay constant along irreducible components of the zero set of $p$, and that the presence of vertical line singularities does not affect global integrability. This answers Question 2 in \cite{bickel2022singularities}. Those results are then generalized to $n$ variables. Finally, we consider slice matrices, which we use to obtain derivative integrability conditions of unidirectional compositions $\phi^N$. We present several new examples, including an RIF with two vertical line singularities and one with vertical surface singularities.
    \end{abstract}

   \section{Introduction}
    A holomorphic function is called \textit{inner} if it is bounded on a domain in $\C^n$ and has boundary values of modulus one almost everywhere. This paper deals with a special type of such functions, defined on the unit polydisk
    \[\D^n=\{z=(z_1,\dots,z_n)\in\mathbb{C}^n \,:\,|z_j|<1, \, j=1,\dots,n\}\subset\mathbb{C}^n, \]
    with potential singularities on its distinguished boundary, the $n$-torus
    \[\T^n=\{z=(z_1,\dots,z_n)\in\mathbb{C}^n \,:\,|z_j|=1, \, j=1,\dots,n\}\subset\mathbb{C}^n, \]
    called rational inner functions. Their denominator polynomials $p\in\mathbb{C}[z_1,\dots,z_n]$ are \textit{stable} with regards to the unit polydisk $\D^n$, i.e. have the property that 
    \[\mathcal{Z}_p=\{z\in\C^n\,:\, p(z)=0\}\cap \D^n=\varnothing.\]

    Then, given two polynomials $p,q\in\mathbb{C}[z_1,\dots,z_n]$ with no common factors, where $p$ is stable, the function $\phi=\frac{q}{p}$ is called a \textit{rational inner function (RIF)} if $|\phi(z)|=1$ for almost every $z\in\T^n$. Rational inner functions are by definition holomorphic on $\D^n$, with $|\phi(z)|<1$ on $\D^n$, and
    \vspace{-0.5pc}
    \[|\phi^*(z)|=1\quad \text{for }z\in\T^n,\]
    where $\phi^*(z)$ is the radial limit of $\phi$, i.e.
    \[\phi^*(z)=\lim_{r\to1^-}\phi(rz):=\lim_{r\to1^-}\phi(rz_1,\dots,rz_n).\]
    This limit indeed exists at all points of the $n$-torus, by \cite[Theorem C]{knese_nontanglimits}. 
    
    \vspace{1mm}
    Rational inner functions play a significant role in multiple areas of mathematics, where the most prominent one is in multivariable function and operator theory. They appear as solutions to Nevanlinna-Pick interpolation problems \cite{agler_pick_int} and approximations of Schur functions on the polydisk \cite{rudin1969function}, playing an important role in the construction of Agler decompositions on the bidisk \cite{bickel_agler}. Moreover, stable polynomials appear in the study of dynamical systems \cite{jury}, \cite{dynamics_sola_tullydoyle}.

    \vspace{1mm}
    There is an alternate description of RIFs, using the notion of a \textit{reflection polynomial} $\tilde{p}$ of $p$, which is defined as 
    \vspace{-0.5pc}
    \[\tilde{p}(z_1,\dots,z_n)=z_1^{m_1}\cdots z_n^{m_n}\,\overline{p\bigg(\frac{1}{\overline{z_1}},\dots,\frac{1}{\overline{z_n}}}\bigg),\]
    where $m=\deg p=(m_1,\dots,m_n)$ is the polydegree of $p$. Here each $m_j$ denotes the degree of $p$ in the variable $z_j$. The following classic result by Rudin and Stout \cite{rudin-stout} (also found in \cite[Chapter 5]{rudin1969function}) and independently by Pfister \cite{pfister}, then states:

    \begin{theorem}
        Given polynomials $p,q\in\mathbb{C}[z_1,\dots,z_n]$ with no common factors, where $p$ is stable, the function $\phi=\frac{q}{p}$ is a rational inner function (RIF) if and only if $q(z)$ is of the form
        \[az^m\, \tilde{p}(z)\]
        for $a\in\T$ and $m\in\mathbb{Z}^n_{\geq 0}$ (written in multi-index notation). Moreover, every RIF $\phi$ is of the form 
        \vspace{-0.5pc}
        \[\phi(z)=az^m\frac{\tilde{p}(z)}{p(z)}\]
        for some stable $p$ that has no common factors with $\tilde{p}$.
    \end{theorem}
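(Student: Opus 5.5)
The plan is to prove the easy direction by direct computation, and the hard direction by an algebraic comparison of $q$ and $\tilde p$ on the torus.

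\emph{Sufficiency.} Suppose $q=az^m\tilde p$. On $\T^n$ we have $1/\overline{z_j}=z_j$, so $\tilde p(z)=z^{\deg p}\,\overline{p(z)}$ there. Hence
\[
|q(z)|=|z^m|\,|z^{\deg p}|\,|p(z)|=|p(z)| .
\]
This gives $|\phi|=1$ on $\T^n\setminus\{p=0\}$. That set has full measure, because $p\not\equiv 0$ and so its zero set in $\T^n$ is a real-analytic subvariety of measure zero. Stability of $p$ makes $\phi$ holomorphic on $\D^n$, so $\phi$ is a RIF.

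\emph{Necessity.} Suppose $\phi=q/p$ is a RIF. Then $|q|^2=|p|^2$ a.e. on $\T^n$, and by continuity on all of $\T^n$. On the torus $\overline{p(z)}=\overline{p}(1/z)$, where $\overline p$ denotes the polynomial with conjugated coefficients. So the identity becomes
\[
q(z)\,\overline{q}(1/z)=p(z)\,\overline{p}(1/z) \quad\text{on } \T^n .
\]
Both sides are Laurent polynomials. A Laurent polynomial vanishing on $\T^n$ has all Fourier coefficients zero, so the identity holds for all $z\in(\C\setminus\{0\})^n$.

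Let $N\ge$ both $\deg q$ and $\deg p$ componentwise, and multiply through by $z^{N}$. Writing $\tilde q_N(z)=z^N\,\overline q(1/z)$, and similarly $\tilde p_N$, we obtain the polynomial identity
\[
q\,\tilde q_N=p\,\tilde p_N .
\]
Since $\gcd(p,q)=1$ and $\C[z]$ is a UFD, $p$ divides $\tilde q_N$, and symmetrically $\tilde p_N=z^{N-\deg p}\tilde p$ is divisible by $q$.

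The main obstacle is to pin down the quotient and show it is a monomial times a unimodular constant. I would use stability. First, $\tilde p$ has no monomial factor $z_j$. If it did, then $p$ would have degree less than $m_j$ in $z_j$ after reflection, contradicting the definition of $m_j$.

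So write $z^{N-\deg p}\tilde p=q\,r$. Every irreducible factor of $q$ therefore divides $z^{N-\deg p}\tilde p$. Next, apply reflection to the first divisibility $p\mid\tilde q_N$, which gives $\tilde q_N=p\,s$. Reflecting yields $q$ equal to (monomial)$\cdot\tilde p\cdot\tilde s$. Combining $q=z^{\alpha}\tilde p\,\tilde s$ with $\tilde p\, z^{N-\deg p}=q\,r$ gives $z^{N-\deg p}=z^{\alpha}\tilde s\, r$. Hence $r$ and $\tilde s$ are monomials up to constants, which makes $q=c z^{m}\tilde p$.

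Finally, $|q|=|p|=|\tilde p|$ on $\T^n$ forces $|c|=1$. For the "moreover" clause, note that a common factor of $p$ and $\tilde p$ would then divide $q$, contradicting $\gcd(p,q)=1$. I expect the delicate bookkeeping to be the reflection degrees (the choice of $N$ versus $\deg p$), and I would handle it by consistently using $\tilde f_N=z^{N-\deg f}\tilde f$.
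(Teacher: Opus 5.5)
The paper gives no proof of this statement. It quotes the result from Rudin--Stout (also Rudin's book, Chapter 5) and Pfister, so there is no internal argument to compare with. Your necessity argument is a correct, self-contained proof by the standard unique-factorization route:
\begin{itemize}
\item the passage from $|q|=|p|$ on $\T^n$ to the polynomial identity $q\,\tilde q_N=p\,\tilde p_N$ is sound;
\item coprimality gives $p\mid \tilde q_N$ and $q\mid z^{N-\deg p}\tilde p$;
\item reflecting $\tilde q_N=ps$ gives $q=z^{\alpha}\tilde p\,\tilde s$ with $\alpha=N-\deg p-\deg s\geq 0$, because $\deg(ps)=\deg \tilde q_N\leq N$;
\item cancelling $\tilde p$ leaves $\tilde s\, r=z^{\deg s}$, so $\tilde s$ is a constant times a monomial.
\end{itemize}
Contrary to what you announce, stability plays no role here. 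The fact that no $z_j$ divides $\tilde p$ holds for every polynomial, by the definition of $m_j$, and you never use it: cancelling $\tilde p$ only needs $\tilde p\neq 0$. So your route shows that $q=az^m\tilde p$ follows purely algebraically from $|q|=|p|$ on $\T^n$ and $\gcd(p,q)=1$; stability matters only for the converse. The ``moreover'' clause is handled correctly.

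The one step to tighten is sufficiency. You check holomorphy on $\D^n$ and $|\phi|=1$ a.e.\ on $\T^n$, which matches the paper's literal definition. However, an inner function must also be bounded on $\D^n$, and the paper immediately uses $|\phi|<1$ there. This does not follow from what you wrote. Since $p$ may vanish on $\T^n$ (for instance at $(1,1,1)$ in the favourite example), you cannot apply the maximum principle on $\overline{\D^n}$ directly.

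A short fix uses one-variable slices. Let $d=\deg p$ and $|d|=d_1+\dots+d_n$. Fix $\zeta\in\T^n$ and put $g(\lambda)=p(\lambda\zeta)$. Then:
\begin{itemize}
\item $g(0)=p(0)\neq 0$ and $g$ has no zeros in $\D$;
\item $e:=\deg g\leq |d|$;
\item $\tilde p(\lambda\zeta)=\lambda^{|d|}\zeta^{d}\,\overline{g(1/\bar\lambda)}$.
\end{itemize}
Writing $g(\lambda)=c\prod_{k=1}^{e}(\lambda-a_k)$ with $|a_k|\geq 1$, you get
\[
\frac{\tilde p(\lambda\zeta)}{p(\lambda\zeta)}=\zeta^{d}\,\frac{\bar c}{c}\,\lambda^{|d|-e}\prod_{k=1}^{e}\frac{1-\bar a_k\lambda}{\lambda-a_k},\qquad \lambda\in\D .
\]
Each factor in the product has modulus at most $1$ on $\D$. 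If $|a_k|=1$ it is the constant $-\bar a_k$. If $|a_k|>1$ it is a unimodular multiple of the Blaschke factor with zero $1/\bar a_k$.

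Hence $|\tilde p/p|\leq 1$ on $r\T^n$ for every $0<r<1$. The map $z\mapsto \tilde p(rz)/p(rz)$ is holomorphic near $\overline{\D^n}$, so the maximum principle gives $|\tilde p/p|\le 1$ on $r\overline{\D^n}$. Together with $|az^m|\leq 1$, this yields $|\phi|\leq 1$ on $\D^n$.
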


    Rational inner functions are precisely the multivariable generalization of finite Blaschke products (see \cite{garcia2016finiteblaschkeproductssurvey} for an extensive overview), which are rational, inner and holomorphic on the unit disk $\D$. Unlike the one-variable Blaschke products, even though they have radial limits at all points of $\T^n$, RIFs are possibly not continuous on the whole of $\overline{\D^n}$. Unless $\tilde{p}(z)=\text{const}\cdot p(z)$, the function $\phi$ will have boundary singularities at points where $p(z)=0$. For example, the \q{favourite example}
    \vspace{-0.5pc}
    \[\phi(z_1,z_2,z_3)=\frac{\tilde{p}(z_1,z_2,z_3)}{p(z_1,z_2,z_3)}=\frac{3z_1z_2z_3-z_1z_2-z_1z_3-z_2z_3}{3-z_1-z_2-z_3}\]
    has a boundary singularity at the point $\zeta=(1,1,1)$. Even though both $p$ and $\tilde{p}$ vanish at the point $\zeta$, it is not possible to factorize and cancel out common factors. To make matters simpler, we will be concerned only with RIFs of the form $\phi=\frac{\tilde{p}}{p}$. The following properties are well-known and simple to check.
    
    \begin{proposition}{\label{p and tilde p similarities}}
    For $p\in\C[z_1,\dots, z_n]$ stable, and $\tilde{p}$ its reflection polynomial,
        \begin{itemize}
            \item[i)] The polydegrees of $p$ and $\tilde{p}$ are equal. 
            \item[ii)] $\mathcal{Z}_p\cap\T^n=\mathcal{Z}_{\tilde{p}}\cap\T^n$.
            \item[iii)] $|p|=|\tilde{p}|$ on $\T^n$.
        \end{itemize}
    \end{proposition}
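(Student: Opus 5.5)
The plan is to work directly from the definition of $\tilde p$, writing $p(z)=\sum_{\alpha} c_\alpha z^\alpha$ where $\alpha$ ranges over multi-indices with $0\le \alpha_j\le m_j$. Substituting into the definition gives
\[\tilde p(z)=z_1^{m_1}\cdots z_n^{m_n}\sum_\alpha \overline{c_\alpha}\,\overline{\Big(\tfrac{1}{\overline{z}}\Big)^{\alpha}}=\sum_\alpha \overline{c_\alpha}\, z^{m-\alpha},\]
so $\tilde p$ is again a polynomial. Its coefficients are those of $p$, conjugated and re-indexed by $\alpha\mapsto m-\alpha$. All three items will be read off from this formula.

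For i), fix $j$. Since $m_j=\deg_{z_j}p$, some $c_\alpha\neq 0$ has $\alpha_j=m_j$, which shows $\deg_{z_j}\tilde p\le m_j$. For the reverse inequality, I would show that some $c_\alpha\neq0$ has $\alpha_j=0$; that monomial then contributes $z_j^{m_j}$ to $\tilde p$. This is the one step where stability is genuinely needed, and I expect it to be the main obstacle. Suppose instead that every $\alpha$ with $c_\alpha\neq0$ had $\alpha_j\ge1$. Then $z_j$ divides $p$, so $p$ vanishes on $\{z_j=0\}\cap\D^n\neq\varnothing$, contradicting $\mathcal Z_p=\varnothing$. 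Hence $\deg_{z_j}\tilde p=m_j$ for each $j$.

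For iii), take $z\in\T^n$. Then $1/\overline{z_j}=z_j$ and $|z_j|=1$, so $\tilde p(z)=z^m\,\overline{p(z)}$, and taking moduli gives $|\tilde p(z)|=|p(z)|$. Item ii) follows immediately, since on $\T^n$ the polynomial $p$ vanishes exactly where $|p|=0$, which is exactly where $|\tilde p|=0$. Both of these items are routine; only i) requires care.
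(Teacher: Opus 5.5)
Your proof is correct. The paper gives no proof of this proposition, describing it only as ``well-known and simple to check'', so there is no route to compare against. Your computation $\tilde p(z)=\sum_\alpha \overline{c_\alpha}\,z^{m-\alpha}$ is the natural way to fill it in. You also correctly see that stability is needed only for i): for example, $p=z_1$ gives $\tilde p=1$, while ii) and iii) hold for any polynomial.

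One sentence in i) gives the wrong reason for the upper bound. The bound $\deg_{z_j}\tilde p\le m_j$ follows simply from $\alpha_j\ge 0$ for every $\alpha$. It does not follow from the existence of some $c_\alpha\neq 0$ with $\alpha_j=m_j$; that fact instead shows $z_j\nmid\tilde p$, which you don't need.

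For the lower bound, say explicitly that $\alpha\mapsto m-\alpha$ is injective. Then the term $\overline{c_\alpha}z^{m-\alpha}$ coming from a $c_\alpha\neq0$ with $\alpha_j=0$ cannot be cancelled by another term. This is implicit in your ``that monomial then contributes $z_j^{m_j}$''.
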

    
    Since $p$ and $\tilde{p}$ are of the same polydegree, we will say that an RIF $\phi=\frac{\tilde{p}}{p}$ is of polydegree $m$ if $p$ (and equivalently, $\tilde{p}$) is of polydegree $m$. 
    
    \vspace{2mm}
    Singularities of rational inner functions can be examined using different methods. One way would be to study the geometry of the unimodular level sets of the RIF, like in \cite{Bickel_2020} or more recently in \cite{bickel2022singularities}. We could also look at finite iterations $\phi^N$ of $\phi$ and observe the changes in the singular sets as we iterate. This is something that is investigated in detail in Section \ref{Compositions} via the characterization of the partial derivative integrability around the singularities. More specifically, for an RIF $\phi$ in $n$ variables, the main interest will be to find $\mathfrak{p}\geq1$ such that $\frac{\partial \phi}{\partial z_j}\in L^{\mathfrak p}(\T^n)$, for $j=1,\dots,n$, but we will mostly investigate the case when $j=n$. An extensive analysis had been done for two-variable RIFs in \cite{Bickel_2017}, while \cite{bickel2022singularities} dealt with RIFs in three and more variables and displayed behaviour previously unseen in the two variable case. As we increase the number of variables, the zero set $\mathcal{Z}_{p}$ on the torus can increase in dimension, so instead of point singularities, the problem becomes one of curve singularities, which are generally harder to work with. Narrowing the question to a special type of RIFs linear in the last variable, and with the singular set comprised purely of isolated points, previously known results from two variables have been extended to more variables in \cite{sola_2023}. 
    
    \vspace{1mm}
    The structure of the paper is the following. In Section \ref{Prerequisites}, we give a brief overview of the main integrability results from \cite{Bickel_2017}, \cite{bickel2022singularities} and \cite{sola_2023}. In Section \ref{section on curve singularities}, we move to $(m_1,m_2,1)$-degree RIFs in three variables with curve singularities and relate the zero set of $\phi$ on $\T^3$ to the zero set of an auxiliary function $\rho_{\phi}$ on $\T^2$, in a similar manner as was done in \cite{sola_2023}. We present a series of results for partial integrability bounds for such RIFs, extending those in \cite{bickel2022singularities} and \cite{sola_2023}. For instance, we answer Question 2 in \cite{bickel2022singularities} by showing that the presence of vertical line singularities does not affect global integrability. Further, in Section \ref{section higher dimensions}, we extend these results to higher-dimensional RIFs linear in the last variable, with singular sets of codimension one.
    
    Section \ref{Compositions} turns to an investigation of iterations of such RIFs, by describing the singular set of $\phi^N$ on $\T^n$ for $N\in\mathbb{N}$, in terms of the singular set of $\phi$ on $\T^n$. After showing that the singular sets are preserved under iteration, we extract integrability conditions for $\frac{\partial\phi^N}{\partial z_n}$ from those for $\frac{\partial\phi}{\partial z_n}$.
    
    Finally, Section \ref{Examples} is dedicated to constructing an original example of a three-variable rational inner function with two vertical line singularities.
    
    \section{Preliminaries}{\label{Prerequisites}}
     We start the paper with an overview of known integrability results for two-variable RIFs, and then move to the $n$-variable ones. The reason for this separation of the two cases lies in the following theorem extracted from \cite[Theorem 2.4]{aglermccarthystankus2005toralalgebraicsetsfunction}:
    \begin{theorem}{\label{agler-mccarthy-stankus2006}}
        For an irreducible RIF $\phi=\frac{\tilde{p}}{p}$ in $n$ variables, 
        $\dim (\mathcal{Z}_p\cap\T^n)\leq n-2$.
    \end{theorem}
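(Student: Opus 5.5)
I would prove the bound by contradiction, playing the stability of $p$ against the reflection symmetry on the torus. Set $Z=\mathcal{Z}_p\cap\T^n$, which is a real semi-algebraic subset of the real $n$-dimensional manifold $\T^n$. Suppose $\dim Z\geq n-1$. Then $Z$ contains a smooth real-analytic piece $M$ of dimension exactly $n-1$; that is, a hypersurface in $\T^n$. Near a smooth point $\zeta\in M$, use local coordinates $z_j=e^{i\theta_j}$. After reordering the variables, I would write $M$ locally as a graph $\theta_n=g(\theta_1,\dots,\theta_{n-1})$ with $g$ real-analytic.

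The first step is to complexify. The function $p(e^{i\theta_1},\dots,e^{i\theta_n})$ vanishes on the real hypersurface $M$, and $p$ is holomorphic. Consider the complexification of $\theta\mapsto(\theta',g(\theta'))$, defined for $\theta'\in\C^{n-1}$ near real points. Restricting $p$ to it gives a holomorphic function of $\theta'$ that vanishes on the totally real set $\R^{n-1}$, so it vanishes identically. Hence $\mathcal{Z}_p$ contains a complex hypersurface germ $V$ through $\zeta$ that meets $\T^n$ in the real $(n-1)$-dimensional set $M$.

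The second step, which I expect to be the main obstacle, is to show that $V$ must enter $\D^n$, contradicting stability. Take $\theta'=x+iy$ with small $y$. Then $|z_j|=e^{-y_j}$ for $j<n$, and
\[|z_n|=\exp\bigl(-\operatorname{Im} g(x+iy)\bigr)=\exp\bigl(-\nabla g(x)\cdot y+O(|y|^2)\bigr).\]
We need to choose $y$ with every $y_j>0$ and $\nabla g(x)\cdot y>0$.
\begin{itemize}
\item If some partial derivative $\partial_j g(x)$ is positive, such a $y$ clearly exists, and then $V$ meets $\D^n$.
\item If all $\partial_j g\le 0$ on an open set, I would instead exploit the symmetry $\tilde p$: since $M\subset\mathcal{Z}_{\tilde p}$ as well, the reflected germ $V^*=\{1/\bar z: z\in V\}$ lies in $\mathcal{Z}_p$ too.
\end{itemize}
Taking $y$ with all $y_j<0$ and $\nabla g\cdot y<0$ then lands $V$ in the exterior $\{|z_j|>1\ \forall j\}$. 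Its reflection therefore lies in $\D^n$, and it is a zero of $p$, again a contradiction. Any nonzero linear form admits either a positive vector making it positive or one making it negative. The degenerate case $\nabla g\equiv 0$ gives $\theta_n$ constant. There I would perturb by choosing the coordinate labelled $n$ differently, or argue directly that then $z_n-c$ divides both $p$ and $\tilde p$, contradicting irreducibility and coprimality.

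Finally, I would tidy up the role of irreducibility. It is used to rule out factors of $p$ that are unimodular in one variable, such as $z_n-c$ with $|c|=1$, whose zero sets give exactly $(n-1)$-dimensional toral pieces. With those excluded, the argument above yields $\dim(\mathcal{Z}_p\cap\T^n)\le n-2$.
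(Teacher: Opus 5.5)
The paper gives no proof of this statement; it quotes it from Agler--McCarthy--Stankus \cite{aglermccarthystankus2005toralalgebraicsetsfunction}. Your Step 1 and your first bullet are fine, but the second bullet fails.

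The reflected germ is not new. For $z=(e^{iw_1},\dots,e^{iw_{n-1}},e^{ig(w)})\in V$ you have $1/\bar z_j=e^{i\bar w_j}$ and $1/\bar z_n=e^{i\overline{g(w)}}=e^{ig(\bar w)}$, because $g$ has real Taylor coefficients. So $V^*=V$, and your ``exterior'' condition ($y_j<0$ for all $j$, $\nabla g\cdot y<0$) is just the interior condition after $y\mapsto -y$. Concretely, if all $\partial_j g\le 0$ and all $y_j<0$, then $\nabla g\cdot y\ge 0$, so the vector you ask for does not exist. Your remark about linear forms pairs the wrong cases: a positive $y$ with $\nabla g\cdot y<0$ only produces points of $\D^{n-1}\times\{|z_n|>1\}$, which stability allows.

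No stability argument can close this case. Take $p=1-z_1z_2\cdots z_n$. It is stable, and its torus zero set $\{\theta_n=-(\theta_1+\dots+\theta_{n-1})\}$ is $(n-1)$-dimensional with $\nabla g=(-1,\dots,-1)$. Its zero variety never meets $\D^n$. It is excluded only because $\tilde p=-p$. So your closing claim, that coprimality is needed only against factors like $z_n-c$, is false.

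The repair is to use coprimality in the main case, which in fact makes Step 2 unnecessary.
\begin{itemize}
\item Since $|\tilde p|=|p|$ on $\T^n$ for any polynomial, $\tilde p$ also vanishes on $M$. Your Step 1 applied to $\tilde p$ then gives $\tilde p\equiv 0$ on $V$.
\item $V$ is a connected $(n-1)$-dimensional complex manifold inside $\{p=0\}$, so some irreducible factor $q$ of $p$ vanishes on all of $V$.
\item Then $\{q=0\}\cap\{\tilde p=0\}\supseteq V$ is an algebraic subset of the irreducible hypersurface $\{q=0\}\subset\C^n$ of full dimension $n-1$, hence equals it.
\item The Nullstellensatz, with $(q)$ prime, gives $q\mid\tilde p$. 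So $q$ is a common factor of $p$ and $\tilde p$, contradicting the hypothesis.
\end{itemize}
This is the toral/atoral mechanism behind the cited theorem. It uses only that $p$ and $\tilde p$ are coprime, not stability.
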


    \noindent
    This means that two-variable RIFs can only have point singularities, while the singular sets of three-variable ones may also contain curves. See \cite[Example 5.3.]{bickel2022singularities} for an example of an RIF with both an isolated singularity and a curve of singularities. In the same manner, four-variable RIFs may have surface, curve and/or point singularities and so on. In this section we will review results on RIFs containing only point singularities. 

    \subsection{Two-variable RIFs}
    Singularities of two-variable RIFs have been thoroughly investigated in \cite{Bickel_2017} through multiple viewpoints, but mostly through the geometry of $\mathcal{Z}_p$ on the faces of the bidisk $\D\times\T$ and $\T\times\D$ near a singular point $(\xi_1,\xi_2)\in\T^2$. Without loss of generality, we restrict to $\D\times\T$ by fixing $\zeta_2\in\T$ near $\xi_2$. Let $\alpha_1(\zeta_2),\dots \alpha_k(\zeta_2)\in\D$ denote the points where $\tilde{p}(\alpha_i(\zeta_2),\zeta_2)=0$, i.e. $(\alpha_i(\zeta_2),\zeta_2)\in \mathcal{Z}_{\tilde{p}}\cap (\D\times \T)$. It was shown that $\mathcal{Z}_{\tilde{p}}\cap (\D\times \T)$ approaches the singularity $(\xi_1,\xi_2)$ such that there exists an even number $K_1$ satisfying 
    \[\min_{1\leq i \leq k} \big(1-|\alpha_i(\zeta_2)|\big)\approx|\xi_2-\zeta_2|^{K_1}\]
    for all $\zeta_2\in\T$ sufficiently close to $\xi_2$. The integer $K_1$ was called the  \textit{local $z_1$-contact order of $\phi$ at $(\xi_1,\xi_2)$.} 
    
    Taking the maximum $K_1$ over all of the singularities of $\phi$ gives us the notion of the \textit{$z_1$-contact order of $\phi$}. This in turn characterizes the \textit{critical integrability index} $\mathfrak p^*$ of $\frac{\partial \phi}{\partial z_1}$. This is the number $\mathfrak p^*:=\sup \mathfrak p$, where $\mathfrak p \geq 1$ (see Remark \ref{remark p in Lp^1}), such that $\frac{\partial \phi}{\partial z_1}\in L^{\mathfrak p}(\T^2)$, in the following way. By \cite[Theorem 4.1.]{Bickel_2017}, for $1\leq \mathfrak p <\infty$, 
    \[\frac{\partial \phi}{\partial z_1}\in L^{\mathfrak p}(\T^2) \iff K_1<\frac{1}{\mathfrak{p}-1}.\]

    We can define the $z_2$-contact order of $\phi$ in a similar manner, starting with restricting to $\T\times\D$ by fixing $\zeta_1\in\T$ near $\xi_1$. The even integer $K_2$ would then serve to obtain an analogous result of the partial integrability, i.e. that for $1\leq \mathfrak r <\infty$, 
    \[\frac{\partial \phi}{\partial z_2}\in L^{\mathfrak r}(\T^2) \iff K_2<\frac{1}{\mathfrak{r}-1}.\]

    \vspace{0.5pc}
    \noindent
    As it turns out, in two variables we have $\mathfrak p^* = \mathfrak r^*$, that is, by \cite[Theorem 4.3]{Bickel_2020},
    \[\frac{\partial \phi}{\partial z_1}\in L^{\mathfrak p}(\T^2) \iff \frac{\partial \phi}{\partial z_2}\in L^{\mathfrak p}(\T^2).\]
    However, in higher dimensions, this result does not hold and critical integrability indices can differ from a variable to another. See \cite[Example 3.1.]{bickel2022singularities}. Moreover, a notion of contact order hasn’t been systematically developed for RIFs in three and more variables, but there are other ways of characterizing the derivative integrability.

    \subsection{RIFs in three and more variables}

    Previously in \cite{bickel2022singularities}, three-dimensional rational inner functions with isolated singularities were investigated in detail. The paper \cite{sola_2023} focused on such RIFs as well, but in the general higher dimensional ($n\geq 3$) setting. Specifically, in order to characterize the integrability of the partial derivative in the last variable, the analysis was restricted to $(m_1,\dots,m_{n-1},1)$-degree irreducible RIFs. In this case, the root of the defining polynomial $p(z_1,\dots,z_{n-1},\tau)$ can be expressed as a rational function $\tau=\psi(z_1,\dots z_{n-1})$, yielding a single-valued parametrization of $\tau\in\T$. When the degree in $z_n$ is two or higher, this is typically no longer possible.

    \vspace{0.5mm}
    Now, the linearity in the last variable allows us to write
    \begin{align}{\label{general formula phi}}
    \phi(z)=\frac{\tilde{p}_1(z_1,\dots,z_{n-1})\cdot z_n+ \tilde{p}_2(z_1,\dots,z_{n-1})}{p_1(z_1,\dots,z_{n-1})+ p_2(z_1,\dots,z_{n-1}) \cdot z_n}
    \end{align}
    for $z=(z_1,\dots,z_n)$ and where $p_j,\tilde{p}_j\in\C[z_1,\dots,z_{n-1}]$, for $j=1,2$. By explicitly writing $z_n$ in terms of the other variables, we obtain a parametrization of $\mathcal{Z}_{\tilde{p}}$,
    \[\tilde{p}_1(z_1,\dots,z_{n-1})\cdot z_n+ \tilde{p}_2(z_1,\dots,z_{n-1}) =0 \iff z_n=-\frac{\tilde{p}_2(z_1,\dots,z_{n-1})}{\tilde{p}_1(z_1,\dots,z_{n-1})}\]
    whenever $\tilde{p}_1\neq 0$. This is always the case for degree $(m_1,m_2,1)$ RIFs with isolated zero sets; see the discussion in the beginning of Section \ref{section on curve singularities} for a clear explanation of why. After setting 
    \[z_n=\psi^0(z_1,\dots,z_{n-1}):=-\frac{\tilde{p}_2(z_1,\dots,z_{n-1})}{\tilde{p}_1(z_1,\dots,z_{n-1})},\] 
    an auxiliary function $\rho_{\phi}$ is then defined as 
    \begin{align*}
    \rho_{\phi}(z_1,\dots,z_{n-1}) :&= 1-|\psi^0(z_1,\dots,z_{n-1})|^2 \\
    &=\frac{|\tilde{p}_1(z_1,\dots,z_{n-1})|^2-|\tilde{p}_2(z_1,\dots,z_{n-1})|^2}{|\tilde{p}_1(z_1,\dots,z_{n-1})|^2}.
    \end{align*}
    On the torus, the zero set of the function $\rho_{\phi}$ is closely connected to the zero set of $p$. Specifically, for a point $\zeta=(\zeta_1,\dots,\zeta_n)\in\T^n$, we have that 
    \begin{align}
    p(\zeta)=0 \iff \rho_{\phi}(\zeta_1,\dots,\zeta_{n-1})=0 \iff (|\tilde{p}_1|^2-|\tilde{p}_2|^2)(\zeta_1,\dots,\zeta_{n-1})=0.
    {\label{equation in beginning, phi equiv with rho_phi}}
    \end{align}
    This is a consequence of \cite[Lemma 2]{sola_2023}, the proof of which we now outline. By fixing  $\hat{\zeta}=(\zeta_1,\dots,\zeta_{n-1})\in\T^{n-1}$, a one-variable rational function 
    \[\phi_{\hat{\zeta}}(z_n)=\phi(\hat{\zeta},z_n)\]
    is obtained, which by \cite[Theorem C]{knese_nontanglimits}
    has unimodular boundary values at every point $z_n\in\T$. Then it follows that $\phi_{\hat{\zeta}}(z_n)$ has to be either a Möbius transformation or constant, in which case the unimodularity would imply it to be equal to some point on the torus $\T$. Now the Lemma proves that in this latter case when $\phi_{\hat{\zeta}}(z_n)=\alpha\in\T$, we have that $|\tilde{p}_1(\hat{\zeta})|^2-|\tilde{p}_2(\hat{\zeta})|^2=0$. This however means that the whole vertical line $\{\hat{\zeta}\}\times\T$ is contained in the level set 
    \[C_{\alpha}=\{z\in\T:\tilde{p}(z)-\alpha p(z)=0\}.\]
    Now the Lemma invokes \cite[Theorem 3.3, Lemma 3.4]{clarkmeasuresrationalinner2021}, where the equality $\hat{\zeta}=(\tau_1,\dots,\tau_{n-1})$ comes from, for some singularity $(\tau_1,\dots,\tau_n)\in\T^n$ of $\phi$ (the result was actually shown in two variables, but it was nowhere directly used that $\hat{\zeta}\in\T$). The other direction is proven in detail in Lemma \ref{Z_p and Z_rho_phi (almost) the same}.

    \vspace{0.1pc}
    This result turns out to be crucial in the characterization of the partial derivative integrability of $\phi$ around its singularities (i.e. zeros of $p$), as it is enough to observe the behaviour of the function $\rho_{\phi}$ around the points restricted to their first $n-1$ variables. In the absence of contact order, we state the following result from \cite[Theorem 2.1, Section 3.1]{bickel2022singularities} and \cite[Lemma 2]{sola_2023}, which serves as an analogue for higher dimensions in the setting of $z_n$-linear RIFs.

    \begin{theorem}{\label{thm equiv Lp and integral}}
    For $1\leq\mathfrak p<\infty$,
        \[\frac{\partial\phi}{\partial z_n}\in L^{\mathfrak p}(\T^n) \iff \int_{\T^{n-1}} (1-|\psi^0(z_1,\dots,z_{n-1})|^2)^{1-\mathfrak p}\, |dz_1|\cdots|dz_{n-1}|<\infty. \]
    \end{theorem}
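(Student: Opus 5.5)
The plan is to integrate out the last variable explicitly. For fixed $\hat z=(z_1,\dots,z_{n-1})\in\T^{n-1}$, $\phi(\hat z,\cdot)$ is a one-variable Möbius map, and the inner integral can be computed in closed form; Fubini then reduces everything to an integral over $\T^{n-1}$.

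Write $\phi$ as in \eqref{general formula phi}. Differentiating in $z_n$ gives
\[
\frac{\partial\phi}{\partial z_n}=\frac{\tilde p_1 p_1-\tilde p_2 p_2}{(p_1+p_2z_n)^2}.
\]
On $\T^n$, the identity $\tilde p(z)=z^m\overline{p(1/\bar z)}$ with polydegree $(m',1)$ yields, for $\hat z\in\T^{n-1}$, the relations $\tilde p_1(\hat z)=\hat z^{m'}\overline{p_1(\hat z)}$ and $\tilde p_2(\hat z)=\hat z^{m'}\overline{p_2(\hat z)}$. In particular $|\tilde p_j|=|p_j|$ on $\T^{n-1}$.

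For fixed $\hat z$ with $\tilde p_1(\hat z)\neq0$ and $\rho_\phi(\hat z)\neq 0$, the slice is
\[
\phi_{\hat z}(z_n)=\hat z^{m'}\,\frac{\bar a z_n+\bar b}{a+bz_n},\qquad a=p_1(\hat z),\ b=p_2(\hat z).
\]
This is a unimodular constant times the Blaschke factor with zero $\psi^0(\hat z)=-\bar b/\bar a$. The modulus $|\partial_{z_n}\phi_{\hat z}|$ on $\T$ equals $\frac{|a|^2-|b|^2}{|a+bz_n|^2}$. Stability of $p$ forces $|a|\ge|b|$, so this equals the Poisson kernel $P_w(z_n)=\frac{1-|w|^2}{|1-\bar w z_n|^2}$ with $w=\psi^0(\hat z)$, up to the choice of conjugates.

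The key one-variable step is the standard estimate
\[
\int_\T P_w^{\mathfrak p}\,|dz_n|\asymp (1-|w|^2)^{1-\mathfrak p},
\]
uniformly in $w\in\D$, for fixed $\mathfrak p>1$. For $\mathfrak p=1$ both sides are constants.

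To prove the estimate, substitute $z_n=\frac{\zeta+w}{1+\bar w\zeta}$. Then $P_w(z_n)\,|dz_n|=|d\zeta|$ and $P_w(z_n)=|1+\bar w\zeta|^2/(1-|w|^2)$, so the integral becomes
\[
(1-|w|^2)^{1-\mathfrak p}\int_\T|1+\bar w\zeta|^{2(\mathfrak p-1)}|d\zeta|.
\]
The last integral lies between positive constants depending only on $\mathfrak p$. The upper bound is trivial; for the lower bound, take the part of $\T$ where $\operatorname{Re}(\bar w\zeta)\ge0$. This gives an exact comparability, not just a bound.

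Tonelli then gives
\[
\int_{\T^n}\Big|\frac{\partial\phi}{\partial z_n}\Big|^{\mathfrak p}\asymp\int_{\T^{n-1}}\rho_\phi^{1-\mathfrak p},
\]
which is the claim.

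The main obstacle is the exceptional set of $\hat z$: points where $\tilde p_1(\hat z)=0$, or where $\rho_\phi(\hat z)=0$ and the slice degenerates to a constant. I will show this set is Lebesgue-null in $\T^{n-1}$ by cases.

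1. The polynomial $|p_1|^2-|p_2|^2$ restricted to $\T^{n-1}$ is real-analytic. It is not identically zero, since otherwise $\phi$ would be degenerate in $z_n$, contradicting polydegree $(m',1)$ with no common factors. Hence its zero set is null.
2. The zero set of $\tilde p_1$ on $\T^{n-1}$ is null unless $\tilde p_1\equiv0$, which is excluded by the degree.

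On the remaining full-measure set the computation above is valid. The convention that $\rho_\phi^{1-\mathfrak p}=+\infty$ on the null set does not affect the integral, by the discussion around \eqref{equation in beginning, phi equiv with rho_phi}.

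The remaining point to check is that the formula $|\partial_{z_n}\phi|=P_{\psi^0}$ holds pointwise a.e. on $\T^n$ as boundary values. This follows because $\phi$ is continuous off $\mathcal Z_p\cap\T^n$, and that set is null by Theorem \ref{agler-mccarthy-stankus2006}.
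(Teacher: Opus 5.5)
Your proof is correct and is essentially the argument behind the statement. The paper gives no proof of its own; it imports the result from \cite[Theorem 2.1, Section 3.1]{bickel2022singularities} and \cite[Lemma 2]{sola_2023}, which argue as you do: freeze $\hat z\in\T^{n-1}$, recognize $\phi(\hat z,\cdot)$ as a unimodular multiple of a Blaschke factor with zero $\psi^0(\hat z)$ so that $|\partial_{z_n}\phi(\hat z,\cdot)|=P_{\psi^0(\hat z)}$, use $\int_\T P_w^{\mathfrak p}\,|dz_n|\asymp(1-|w|^2)^{1-\mathfrak p}$, and integrate out with Tonelli.

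Two cosmetic remarks. First, $\tilde p_1\not\equiv 0$ ultimately comes from stability: if $p_1\equiv 0$, then $p=p_2z_n$ vanishes at $z_n=0$. Second, the identity $|\partial_{z_n}\phi|=P_{\psi^0}$ off $\mathcal Z_p\cap\T^n$ is purely algebraic, so you need neither a boundary-value nor a dimension argument there, since any nonzero polynomial has a null zero set on $\T^n$.
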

    The above theorem can be stated locally as well, that is, on small neighbourhoods around points of the zero set of $\rho_\phi$ on $\T^{n-1}$. In the following chapters, we will usually work with this local definition.
    \vspace{-1pc}

    \section{Curve singularities of $(m_1,m_2,1)$-degree RIFs}{\label{section on curve singularities}}

    We first study three-variable $(m_1,m_2,1)$-degree RIFs with curve singularities. As before, $\phi=\frac{\tilde{p}}{p}$ has no singularities inside the tridisk, which together with $|z_3|\leq 1$  on $\D$ gives $|p_1|\geq|p_2|$, by (\ref{general formula phi}). Passing to the boundary, we obtain the following inequality for points on $\T^2$:
    \vspace{-0.5pc}
    \begin{align}{\label{inequalities for p1 and p2}}
    |\tilde{p}_1|=|p_1|\geq|p_2|=|\tilde{p}_2|.
    \end{align}
    Therefore, if $p_1$ (or equivalently, $\tilde{p}_1$) vanishes at some point $(\xi_1,\xi_2)\in\T^2$, so does $p_2$ (and $\tilde{p}_2$). Then $z_3\in\T$ can be chosen arbitrarily, so $\mathcal{Z}_p\cap\T^3$ and $\mathcal{Z}_{\tilde{p}}\cap\T^3$ contain the whole vertical line $\{(\xi_1,\xi_2)\}\times\T$. Previously (see \cite{bickel2022singularities}), their presence has been problematic in the study of integrability of $\frac{\partial \phi}{\partial z_3}$, since the function
    \vspace{-0.5pc}
    \[\rho_{\phi}(z_1,z_2)=\frac{|\tilde{p}_1(z_1,z_2)|^2-|\tilde{p}_2(z_1,z_2)|^2}{|\tilde{p}_1(z_1,z_2)|^2}\]
    has a singularity at $(\xi_1,\xi_2)$ and is no longer analytic everywhere on $\T^2$. From now on, for a more concise notation, let $\sigma(z_1,z_2):=|\tilde{p}_1(z_1,z_2)|^2 - |\tilde{p}_2(z_1,z_2)|^2$.

    \begin{lemma}{\label{sigma real analytic}}
        The function $\sigma$ is real-analytic and non-negative on the closed bidisk $\overline{\D^2}$.
    \end{lemma}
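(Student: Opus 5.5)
Real-analyticity is immediate, so the plan is to dispose of it first and put the effort into non-negativity. Write $z_j = x_j + i y_j$. Since $\tilde{p}_1$ and $\tilde{p}_2$ are polynomials in $z_1,z_2$, we have
\[|\tilde{p}_j(z_1,z_2)|^2=\tilde{p}_j(z_1,z_2)\overline{\tilde{p}_j(z_1,z_2)},\]
which is a real polynomial in $x_1,y_1,x_2,y_2$. Hence $\sigma$ is a real polynomial on $\C^2\cong\mathbb{R}^4$, and so it is real-analytic everywhere, in particular on $\overline{\D^2}$.

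For non-negativity I would first work on the torus, then pass inside the bidisk by a maximum-principle argument.

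\textbf{Step 1 (torus).} On $\T^2$ the polynomial $\tilde{p}_j$ agrees, up to a unimodular monomial factor, with $\overline{p_{3-j}}$. Concretely, for $\phi$ of polydegree $(m_1,m_2,1)$,
\[\tilde{p}(z)=z_1^{m_1}z_2^{m_2}z_3\,\overline{p(1/\bar z)},\]
so $\tilde{p}_1=z_1^{m_1}z_2^{m_2}\,\overline{p_2(1/\bar z_1,1/\bar z_2)}$ and $\tilde{p}_2=z_1^{m_1}z_2^{m_2}\,\overline{p_1(1/\bar z_1,1/\bar z_2)}$. Together with stability of $p$ and (\ref{inequalities for p1 and p2}), this gives $\sigma\ge 0$ on $\T^2$.

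\textbf{Step 2 (interior).} I would argue via the $z_3$-root of $\tilde{p}$. For fixed $(z_1,z_2)\in\D^2$, write $\tilde{p}(z_1,z_2,z_3)=\tilde{p}_1 z_3+\tilde{p}_2$. The key point to prove is that $|\tilde{p}_1|\ge|\tilde{p}_2|$ on $\D^2$. The direct route is to note that $\tilde{p}_1=z_1^{m_1}z_2^{m_2}\,\overline{p_2(1/\bar z)}$ is only defined this way on the torus, so inside the disk one should instead use the polynomial identity and a maximum principle.

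Consider $g=\tilde{p}_2/\tilde{p}_1$, which is meromorphic on $\D^2$. On $\T^2$ we have $|g|\le1$ wherever it is defined, since $|\tilde{p}_2|=|p_1|$ and $|\tilde{p}_1|=|p_2|$ there.

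\begin{itemize}
\item[a)] Show that $\tilde{p}_1$ has no zeros in $\D^2$. Note that $p_2$ is the reflection of $\tilde{p}_1$, and it is not immediately stable, so this needs an argument. I would try to use that $\phi(z_1,z_2,\cdot)$ is a Möbius map, or a unimodular constant, for each $(z_1,z_2)\in\D^2$. Indeed, $\phi(z_1,z_2,\cdot)$ is holomorphic on $\overline{\D}$ with modulus less than $1$, so its zero $z_3=-\tilde{p}_2/\tilde{p}_1$ must lie outside $\D$. But that gives the wrong direction: it yields $|\tilde{p}_2|\ge|\tilde{p}_1|$ inside.
\item[b)] So I expect the correct inequality inside $\D^2$ to come instead from the one-variable Schur--Cohn type fact for $p=p_1+p_2z_3$, where stability gives $|p_1|\ge|p_2|$ on $\D^2$. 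One then transfers this to $\sigma$ through the definition.
\end{itemize}

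This transfer step is the main obstacle. The fallback is a maximum principle: apply it to the subharmonic function $\log|\tilde{p}_2|-\log|\tilde{p}_1|$ slice-wise in each variable, with boundary data from Step 1. This yields $\sigma\ge0$ on $\overline{\D^2}$ provided $\tilde{p}_1$ has no zeros in $\D^2$, which itself must be justified (for instance by a Hurwitz-type perturbation argument, $p\mapsto p(rz)$ with $r\to1$).
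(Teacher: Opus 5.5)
\textbf{Verdict: genuine gap, and Step 2 cannot be completed, because the statement is false in the open bidisk. It holds only on $\T^2$, which is also all the paper's proof actually establishes.}

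\textbf{Real-analyticity and Step 1.} Your real-analyticity argument is fine and is the paper's argument: $\sigma$ is a real polynomial in $\operatorname{Re} z_j,\operatorname{Im} z_j$. In Step 1 your conclusion is right, but the identification is swapped. Expanding $\tilde p(z)=z_1^{m_1}z_2^{m_2}z_3\overline{p(1/\bar z)}$ with $p=p_1+p_2z_3$ gives $\tilde p_1=z_1^{m_1}z_2^{m_2}\overline{p_1(1/\bar z_1,1/\bar z_2)}$ and $\tilde p_2=z_1^{m_1}z_2^{m_2}\overline{p_2(1/\bar z_1,1/\bar z_2)}$. So $\tilde p_j$ is the reflection of $p_j$, not of $p_{3-j}$. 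With your formulas you would get $|\tilde p_1|=|p_2|\le|p_1|=|\tilde p_2|$ on $\T^2$, i.e. $\sigma\le 0$, and likewise $|g|\ge 1$ rather than $|g|\le 1$. With the correct identification, Step 1 is exactly the paper's argument:
\begin{itemize}
\item stability gives $|p_1|\ge|p_2|$ on $\D^2$, since the root $-p_1/p_2$ of $z_3\mapsto p_1+p_2z_3$ is not in $\D$;
\item this extends to $\overline{\D^2}$ by continuity;
\item $|\tilde p_j|=|p_j|$ on $\T^2$.
\end{itemize}

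\textbf{Why Step 2 fails.} Non-negativity fails in $\D^2$. Take the paper's favourite example $p=3-z_1-z_2-z_3$. Then $p_1=3-z_1-z_2$, $p_2=-1$, $\tilde p_1=3z_1z_2-z_1-z_2$ and $\tilde p_2=-z_1z_2$, so
\[
\sigma\bigl(\tfrac23,\tfrac23\bigr)=0-\tfrac{16}{81}<0,\qquad \sigma(t,0)=t^2>0\quad(0<t<1).
\]
Hence $\sigma$ changes sign in $\D^2$. The same example refutes both of your auxiliary claims.
\begin{itemize}
\item $\tilde p_1$ vanishes at $(\tfrac23,\tfrac23)\in\D^2$. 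This is typical: $\tilde p_1$ is the reflection of $p_1=p(\cdot,0)$, which is zero-free on $\D^2$, so zeros of $\tilde p_1$ tend to lie inside $\D^2$.
\item The zero of $z_3\mapsto\phi(z_1,z_2,z_3)$ need not lie outside $\D$; for instance $\phi(t,0,z_3)=-tz_3/(3-t-z_3)$ vanishes at $z_3=0$. A self-map of the disc can vanish inside it, so the ``wrong direction'' inequality in your (a) is false as well.
\end{itemize}
Your maximum-principle fallback needs exactly the zero-freeness of $\tilde p_1$ that fails, and no Hurwitz perturbation can rescue a false claim. The underlying point is that $|p_1|\ge|p_2|$ does not transfer to $\tilde p_1,\tilde p_2$ off the torus, because $|\tilde p_j|=|p_j|$ holds only on $\T^2$.

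\textbf{The paper's own proof.} It has the same limitation. It deduces $\sigma\ge0$ from (\ref{inequalities for p1 and p2}), which is stated only for points of $\T^2$. The correct lemma is: $\sigma$ is real-analytic on $\overline{\D^2}$ (indeed on $\C^2$) and non-negative on $\T^2$. Everything later in the paper only uses $\sigma$ on $\T^2$. Your proof of that corrected version is complete once the identification in Step 1 is fixed and Step 2 is dropped.
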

    \begin{proof}
        The function $\sigma$ is defined as sum of real-valued polynomials on $\overline{\D^2}$. Such polynomials are real-analytic, so their sum is real-analytic as well. By the discussion in the beginning of Section \ref{section on curve singularities}, $|\tilde{p}_1|\geq |\tilde{p}_2|$. This inequality is preserved with squaring, so it immediately follows that $\sigma$ is non-negative as well.    
    \end{proof}

     In cases when $\rho_{\phi}$ turns out to have singularities and ceases to be analytic (Example \ref{Example 5.2.}), the following series of results will help us deal with them. It follows immediately from \cite[Lemma 3.4]{bickel2022singularities} that $\mathcal{Z}_{|\tilde{p}_1|^2}\cap \T^2$ consists of at most finitely many isolated points, as well as that:
     
    \begin{lemma}{\label{finitely VLS}}
        An irreducible $(m_1,m_2,1)$-degree RIF has at most finitely many vertical line singularities. 
    \end{lemma}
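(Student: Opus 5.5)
Vertical line singularities $\{(\xi_1,\xi_2)\}\times\T$ of $\phi$ are exactly the points $(\xi_1,\xi_2)\in\T^2$ where $\tilde{p}_1$ vanishes, so the plan is to show that $\mathcal{Z}_{\tilde{p}_1}\cap\T^2$ is finite. First I reduce the lemma to a statement about the common zeros of $p_1$ and $p_2$ on $\T^2$. By the discussion at the beginning of Section \ref{section on curve singularities}, if $(\xi_1,\xi_2)\times\T\subset\mathcal{Z}_p$, then $p_1(\xi_1,\xi_2)+p_2(\xi_1,\xi_2)z_3=0$ for all $z_3\in\T$. This forces $p_1(\xi_1,\xi_2)=p_2(\xi_1,\xi_2)=0$. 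Conversely, by (\ref{inequalities for p1 and p2}), if $p_1(\xi_1,\xi_2)=0$ then also $p_2(\xi_1,\xi_2)=0$. Since $|p_1|=|\tilde{p}_1|$ on $\T^2$, vertical line singularities correspond bijectively to $\mathcal{Z}_{p_1}\cap\T^2=\mathcal{Z}_{\tilde{p}_1}\cap\T^2=\mathcal{Z}_{|\tilde p_1|^2}\cap\T^2$.

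Finiteness of this set is exactly the consequence of \cite[Lemma 3.4]{bickel2022singularities} quoted just before the statement, so I could simply invoke it. To make the argument self-contained, I would instead prove it as follows. Suppose $\mathcal{Z}_{p_1}\cap\mathcal{Z}_{p_2}\cap\T^2$ were infinite. Since it is a real algebraic subset of $\T^2$, it would then contain a real one-dimensional arc. The key step is to show that $p_1$ and $p_2$ must then share a nontrivial common factor $g\in\C[z_1,z_2]$.

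To get the common factor, I would use the fact that $p_1$ has no zeros in $\D^2$. This follows from stability: take $z_3=0$ in $p=p_1+p_2z_3$. So $p_1$ is a stable polynomial in two variables. A two-variable polynomial vanishing on an infinite subset of $\T^2$ has an irreducible factor $g$ whose zero set meets $\T^2$ in infinitely many points. By the dimension bound of Theorem \ref{agler-mccarthy-stankus2006}, applied to stable two-variable polynomials (whose zero sets meet $\T^2$ in a set of dimension at most $0$ unless the factor is essentially self-reflective, i.e.\ $\tilde g=c\,g$), such a factor $g$ must satisfy $\tilde g = c g$ and its zero set contains a curve in $\T^2$. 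The same argument applies to $p_2$, after noting that $|p_2|\le|p_1|$ on the closed bidisk forces $p_2$ to vanish wherever $p_1$ does on the arc. The zero sets of $p_1$ and $p_2$ then share a real-analytic arc in $\T^2$, which is Zariski dense in the complex curve $\mathcal{Z}_g$. Hence $g$ divides both $p_1$ and $p_2$, and therefore $g$ divides $p=p_1+p_2z_3$, with $g$ depending only on $(z_1,z_2)$.

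This contradicts irreducibility. If $g$ divides $p$ and satisfies $\tilde g=cg$, then $g$ also divides $\tilde p=\tilde p_1 z_3+\tilde p_2$, because $\tilde{p}_j$ is the reflection of $p_j$ up to monomial factors. So $p$ and $\tilde p$ have a common factor, contrary to the assumption that $\phi$ is an irreducible RIF with $p,\tilde p$ coprime. The main obstacle is this middle step: rigorously showing that infinitely many common zeros on $\T^2$ produce a common factor that is self-reflective. I would first try the route of intersecting the real-analytic arc with algebraic curves, using Bézout: two coprime plane curves have only finitely many intersection points.
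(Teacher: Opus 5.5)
Your proposal is correct. Reducing vertical lines to $\mathcal{Z}_{p_1}\cap\T^2=\mathcal{Z}_{\tilde p_1}\cap\T^2$ and then appealing to \cite[Lemma 3.4]{bickel2022singularities} is exactly the paper's argument; the paper gives nothing beyond that citation.

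Your self-contained alternative is a genuine addition and it is sound. The step you flag as the main obstacle is easier than you expect, though. The arc, Zariski density, stability of $p_1$, Theorem \ref{agler-mccarthy-stankus2006} and the self-reflectivity $\tilde g=cg$ can all be dropped.

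Suppose $\mathcal{Z}_{p_1}\cap\T^2$ is infinite. Then some irreducible factor $g$ of $p_1$ vanishes at infinitely many points of $\T^2$. At each of these points, $p_2$ vanishes by (\ref{inequalities for p1 and p2}), and $\tilde p_1,\tilde p_2$ vanish because $|\tilde p_j|=|p_j|$ on $\T^2$. Since $g$ is irreducible, Bézout applies: an irreducible $g$ and a polynomial it does not divide have only finitely many common zeros in $\C^2$. So $g$ divides each of $p_1,p_2,\tilde p_1,\tilde p_2$, hence both $p=p_1+p_2z_3$ and $\tilde p=\tilde p_1z_3+\tilde p_2$. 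This contradicts the coprimality of $p$ and $\tilde p$.

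Your own route also closes with Bézout. Since $\mathcal{Z}_g\cap\T^2\subseteq\mathcal{Z}_{\tilde g}$ for every polynomial $g$, you get $g\mid\tilde g$ directly, with no need for stability or Theorem \ref{agler-mccarthy-stankus2006}.

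The self-contained version makes the lemma independent of the external reference and shows that finiteness is just coprimality of $p$ and $\tilde p$. The citation buys brevity.
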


    \noindent
    We can now classify the zero set of $p$ on $\T^3$ in terms of the zero set of $\rho_{\phi}$ on $\T^2$.
    
    \begin{lemma}{\label{Z_p and Z_rho_phi (almost) the same}}
        Let $\phi$ be an irreducible $(m_1,m_2,1)$-degree RIF. Then the following holds. 
        \begin{enumerate}
            \item[(i)] If $\phi$ has no vertical line singularities, $\rho_{\phi}(\zeta_1,\zeta_2)=0$ if and only if $(\zeta_1,\zeta_2,\tau)$ is a singularity of $\phi$ for some $\tau\in\T$.
            \item[(ii)] If $\phi$ has $k$ vertical line singularities $\{\xi^1\}\times\T,\dots,\{\xi^k\}\times\T$, then $\rho_{\phi}(\zeta_1,\zeta_2)=0$ if and only if $(\zeta_1,\zeta_2,\tau)\in\mathcal{Z}_p\cap\T^3\,\backslash\big( \{(\xi^1,\dots,\xi^k\}\times\T\big)$.
        \end{enumerate}  
    \end{lemma}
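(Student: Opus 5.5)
Both parts rest on one computation. Fix $\hat\zeta=(\zeta_1,\zeta_2)\in\T^2$ and look at the denominator as a linear polynomial in the last variable, $p(\hat\zeta,z_3)=p_1(\hat\zeta)+p_2(\hat\zeta)z_3$. By (\ref{inequalities for p1 and p2}) we have $|p_1|=|\tilde p_1|\geq|p_2|=|\tilde p_2|$ on $\T^2$. I will prove both parts by splitting into cases according to whether $\tilde p_1(\hat\zeta)$ vanishes. By Lemma \ref{finitely VLS} and the discussion before it, $\tilde p_1(\hat\zeta)=0$ happens only at the finitely many points $\xi^1,\dots,\xi^k$ that carry vertical line singularities. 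At every other point, $\rho_\phi(\hat\zeta)$ is well defined and vanishes exactly when $\sigma(\hat\zeta)=0$.

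\emph{Case $\tilde p_1(\hat\zeta)\neq 0$.} Suppose first that $\rho_\phi(\hat\zeta)=0$. Then $|p_2(\hat\zeta)|=|p_1(\hat\zeta)|\neq0$, so $\tau:=-p_1(\hat\zeta)/p_2(\hat\zeta)$ lies in $\T$ and satisfies $p(\hat\zeta,\tau)=0$. Hence $(\hat\zeta,\tau)\in\mathcal Z_p\cap\T^3$, and by Proposition \ref{p and tilde p similarities}(ii) the same point lies in $\mathcal Z_{\tilde p}\cap\T^3$. This point is a genuine singularity: $p$ and $\tilde p$ share no factor, and the intersection lies off the vertical lines because $\hat\zeta\neq\xi^j$. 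Conversely, suppose $p(\hat\zeta,\tau)=0$ for some $\tau\in\T$. Then $p_1(\hat\zeta)=-p_2(\hat\zeta)\tau$, and taking moduli gives $|p_1(\hat\zeta)|=|p_2(\hat\zeta)|$. Therefore $\sigma(\hat\zeta)=0$, and so $\rho_\phi(\hat\zeta)=0$. This proves (i): when there are no vertical line singularities, $\tilde p_1$ has no zeros on $\T^2$ and this case covers everything. The equality $|\tilde p_j|=|p_j|$ on $\T^2$ that I use here follows from the relation $\tilde p(z)=z_3\tilde p_1+\tilde p_2$, where $\tilde p_1$ and $\tilde p_2$ are, up to monomials, the reflections of $p_2$ and $p_1$ respectively.

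\emph{Case $\tilde p_1(\hat\zeta)=0$.} Then $\hat\zeta=\xi^j$ for some $j$ and $p_2(\hat\zeta)=0$ as well, so the whole line $\{\xi^j\}\times\T$ lies in $\mathcal Z_p\cap\T^3$. At such a point $\rho_\phi$ is not defined, since it has the form $0/0$. I therefore read the statement of (ii) as asserting the equivalence only on $\T^2\setminus\{\xi^1,\dots,\xi^k\}$, where it holds by the first case. Combining the two cases: the projection of $\mathcal Z_p\cap\T^3$ minus the vertical lines onto the first two coordinates is exactly $\mathcal Z_{\rho_\phi}\cap\T^2$, which is (ii). One more point needs checking. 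Suppose a point $(\xi^j,\tau)$ on a vertical line could also arise as a limit of zeros of $\rho_\phi$. This does not contradict the lemma, since the lemma only concerns points where $\rho_\phi$ is defined.

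The main obstacle is the bookkeeping in (ii): I must make sure that no singularity off the vertical lines has its projection equal to some $\xi^j$. This is immediate, because every point above $\xi^j$ lies on the vertical line itself. What remains is to state carefully that $\rho_\phi$ is only considered where $\tilde p_1\neq0$. Beyond that, the argument is just the elementary root computation for a polynomial linear in $z_3$, together with the modulus identities on the torus.
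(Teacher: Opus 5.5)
Your proof is correct. For the implication $\rho_\phi(\hat\zeta)=0\Rightarrow(\hat\zeta,\tau)\in\mathcal Z_p$ it is more elementary than the paper's.

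\textbf{Comparison with the paper.} The paper proves by hand only the direction from a zero of $p$ to $\rho_\phi=0$: it solves $\tilde p(\zeta)=0$ for $\zeta_3$ and takes moduli, which is your converse step with $\tilde p$ in place of $p$. For the other direction it refers back to the argument of \cite{sola_2023} sketched in the preliminaries. There, the slice $\phi(\hat\zeta,\cdot)$ is a M\"obius map or a unimodular constant. In the constant case $\{\hat\zeta\}\times\T$ lies in a level set $C_\alpha$, and results on Clark measures \cite{clarkmeasuresrationalinner2021} then place a singularity above $\hat\zeta$. You bypass all of this. If $\tilde p_1(\hat\zeta)\neq0$ and $\sigma(\hat\zeta)=0$, the root $-p_1(\hat\zeta)/p_2(\hat\zeta)$ of the linear polynomial $p(\hat\zeta,\cdot)$ is unimodular. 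This uses only linearity in $z_3$ and $|p_j|=|\tilde p_j|$ on $\T^2$; the paper's route adds a link to the level-set picture, which the lemma does not need.

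Your explicit split on whether $\tilde p_1(\hat\zeta)=0$ also makes (ii) cleaner. The paper obtains that $\mathcal Z_\sigma\cap\T^2$ is the projection of $\mathcal Z_p\cap\T^3$ ``by (i)'', although (i) assumes there are no vertical lines. Your two cases prove this identity directly.

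\textbf{Two small repairs.}

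First, your parenthetical justification of $|\tilde p_j|=|p_j|$ has the indices swapped. Expanding $\tilde p(z)=z_1^{m_1}z_2^{m_2}z_3\,\overline{p(1/\bar z_1,1/\bar z_2,1/\bar z_3)}$ with $p=p_1+p_2z_3$ shows that $\tilde p_1$ is the reflection of $p_1$ and $\tilde p_2$ that of $p_2$, with respect to $(m_1,m_2)$. Your version would give $|\tilde p_1|=|p_2|\le|p_1|=|\tilde p_2|$, the reverse of (\ref{inequalities for p1 and p2}). The identity you actually use is the correct one in (\ref{inequalities for p1 and p2}), so only that sentence needs fixing.

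Second, in your case $\tilde p_1(\hat\zeta)=0$, the deduction should run
$\tilde p_1(\hat\zeta)=0\Rightarrow\tilde p_2(\hat\zeta)=0\Rightarrow p_1(\hat\zeta)=p_2(\hat\zeta)=0\Rightarrow\{\hat\zeta\}\times\T\subset\mathcal Z_p\Rightarrow\hat\zeta=\xi^j$ for some $j$. You need $p_1(\hat\zeta)=0$ as well as $p_2(\hat\zeta)=0$ to get the whole vertical line.
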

    \begin{proof}
        (i)  Since the zero set of $p$ contains no vertical lines, neither does the zero set of $\tilde{p}$. By the observations made in the beginning of this section, this means that $|\tilde{p}_1|^2$ is non-vanishing on $\T^2$, and therefore bounded below on $\T^2$. Thus,
        \vspace{-0.5pc}
        \[\rho_{\phi}(z_1,z_2)=\frac{\sigma(z_1,z_2)}{|\tilde{p}_1(z_1,z_2)|^2}\]
       is analytic on $\T^2$. Therefore, $\mathcal{Z}_{\rho_{\phi}}=\mathcal{Z}_{\sigma}$, and we have 
       \begin{align*}
           \tilde{p}(\zeta_1,\zeta_2,\zeta_3)=0 &\iff \zeta_3\,\tilde{p}_1(\zeta_1,\zeta_2)+ \tilde{p}_2(\zeta_1,\zeta_2)=0 \\
           &\iff \zeta_3=-\frac{\tilde{p}_2(\zeta_1,\zeta_2)}{\tilde{p}_1(\zeta_1,\zeta_2)} \\
           & \Longrightarrow 1=|\zeta_3|^2=\frac{|\tilde{p}_2(\zeta_1,\zeta_2)|^2}{|\tilde{p}_1(\zeta_1,\zeta_2)|^2} \\
           &\Longrightarrow |\tilde{p}_1(\zeta_1,\zeta_2)|^2-|\tilde{p}_2(\zeta_1,\zeta_2)|^2=0 
        \end{align*}
        so $\rho_{\phi}(\zeta_1,\zeta_2)=0$. The other direction follows from the discussion after (\ref{equation in beginning, phi equiv with rho_phi}).
        \vspace{0.5pc}

        (ii) First, notice that (\ref{inequalities for p1 and p2}) implies $\mathcal{Z}_{|\tilde{p}_1|^2}\subset \mathcal{Z}_{\sigma}$. If $|\tilde{p}_1(\zeta_1,\zeta_2)|^2=0$, this means that $\tilde{p}_1(\zeta_1,\zeta_2)=0$, and therefore $\tilde{p}_2(\zeta_1,\zeta_2)=0$ as well. But then $|\tilde{p}_2(\zeta_1,\zeta_2)|^2=0$, so clearly $|\tilde{p}_1(\zeta_1,\zeta_2)|^2-|\tilde{p}_2(\zeta_1,\zeta_2)|^2=0$. Thus, $(\zeta_1,\zeta_2)\in\mathcal{Z}_{\sigma}$.
        
        But then $\mathcal{Z}_{\rho_{\phi}}=\mathcal{Z}_{\sigma}\,\backslash\, \{\xi^1,\dots,\xi^k\}$. By (i), $\mathcal{Z}_{\sigma}$ coincides with the projection of $\mathcal{Z}_p$ onto the first two coordinates. Now the claim follows immediately.   
    \end{proof}

\begin{example}{\label{Example 5.2.}}
    We take a look at the RIF from \cite[Example 5.2]{bickel2022singularities}:
	\begin{align*}
		\phi(z) & = \frac{1-z_1-z_1 z_2 +z_1^2z_2-2z_3-z_1z_3-z_1^2z_3+z_2z_3-z_1z_2z_3+4z_1^2z_2z_3}{4-z_1+z_1^2-z_2-z_1z_2-2z_1^2z_2+z_3-z_1z_3-z_1z_2z_3+z_1^2z_2z_3}.
	\end{align*}
	In the mentioned paper it has been shown that 
    \vspace{-0.5pc}
	\[\mathcal{Z}_p \,\cap \mathbb{T}^3 = \bigg\{(0,0,u) \, : \, u\in[-\pi,\pi]\} \,\cup\, \{s,\arg m(e^{is}),\pi) \,:\,s\in[-\pi,\pi] \, ,\, m(z)=\frac{3+z^2}{1+3z^2}\bigg\},\]
    written in terms of the arguments of the variables. This means that $\mathcal{Z}_p \,\cap \mathbb{T}^3$ contains the vertical line $\{(1,1)\}\times \mathbb{T}$. Using that the coefficients are real, we expand:
	\begin{align*}
		\rho_{\phi}(z_1,z_2) &  = \frac{|\tilde{p}_1(z_1,z_2)|^2-|\tilde{p}_2(z_1,z_2)|^2}{|\tilde{p}_1(z_1,z_2)|^2} = \frac{\tilde{p}_1(z_1,z_2)\tilde{p}_1(\bar{z}_1,\bar{z}_2) - \tilde{p}_2(z_1,z_2)\tilde{p}_2(\bar{z}_1,\bar{z}_2)}{\tilde{p}_1(z_1,z_2)\tilde{p}_1(\bar{z}_1,\bar{z}_2)}\\
		& = \frac{20+6\bar{z}_1^2-6\bar{z}_2-9\bar{z}_1^2\bar{z}_2+6z_1^2-z_1^2\bar{z}_2-6z_2-\bar{z}_1^2z_2-9z_1^2z_2}{24-2\bar{z}_1+6\bar{z}_1^2-5\bar{z}_2-2\bar{z}_1\bar{z}_2-8\bar{z}_1^2\bar{z}_2-2z_1+6z_1^2-z_1^2\bar{z}_2-5z_2-\bar{z}_1^2z_2-2z_1z_2-8z_1^2z_2}
	\end{align*}
	We're interested in the zero set of $\rho_{\phi}$ on the 2-torus, where \[z_j\bar{z}_j=|z_j|^2=1 \, ,\;j=1,2.\]
    Therefore, we can write each of the variables $z_j$ in terms of their arguments only, $z_j=e^{i\theta_j}$ for $\theta_j\in(-\pi,\pi]$. So as Lemma \ref{sigma real analytic} stated for the function $\sigma$, it is clear that $\rho_{\phi}$ is real-valued and non-negative at all points in which it is defined in $\overline{\D^2}$. Now we simplify
	\begin{align}
		\rho_{\phi}(z_1,z_2) & = \frac{20+\frac{6}{z_1^2}-\frac{6}{z_2}-\frac{9}{z_1^2 z_2}+6z_1^2-\frac{z_1^2}{z_2}-6z_2-\frac{z_2}{z_1^2}-9z_1^2z_2}{24-\frac{2}{z_1}+\frac{6}{z_1^2}-\frac{5}{z_2}-\frac{2}{z_1 z_2}-\frac{8}{z_1^2 z_2}-2z_1+6z_1^2-\frac{z_1^2}{z_2}-5z_2-\frac{z_2}{z_1^2}-2z_1z_2-8z_1^2z_2}\nonumber\\
        & = \frac{-9-6z_1^2-z_1^4+6z_2+20z_1^2z_2+6z_1^4z_2-z_2^2-6z_1^2z_2^2-9z_1^4z_2^2}{-(-4+z_1-z_1^2+z_2+z_1z_2+2z_1^2z_2)(-2-z_1-z_1^2+z_2-z_1z_2+4z_1^2z_2)} \nonumber\\
		& = \frac{(-3-z_1^2+z_2+3z_1^2z_2)^2}{(-4+z_1-z_1^2+z_2+z_1z_2+2z_1^2z_2)(-2-z_1-z_1^2+z_2-z_1z_2+4z_1^2z_2)}. \nonumber
	\end{align}
	To see where $\rho_{\phi}$ vanishes on the torus, we have to find the zero sets of the numerator and of the denominator, and then remove the latter set, i.e. the set of singularities of $\rho_{\phi}$. The denominator vanishes in two cases:
	\[-4+z_1-z_1^2+z_2+z_1z_2+2z_1^2z_2 = 0 \iff z_2=\frac{4+z_1+z_1^2}{1+z_1+2z_1^2}\]
    and 
     \[-2-z_1-z_1^2+z_2-z_1z_2+4z_1^2z_2=0 \iff z_2=\frac{2+z_1+z_1^2}{1-z_1+4z_1^2}.\]
	Using the fact that $|z_2|=1$, and taking the absolute value of the expressions above, the equations simplify to
    \vspace{-0.5pc}
	\begin{align*}
		z_1^4+z_1^3+6z_1^2+z_1+1=0,
	\end{align*}
	which has no solutions on $\T$, and 
    \vspace{-0.2pc}
	\[-2(z_1-1)^4=0,\]
	which has the solution $z_1=1$. After substituting into $p=0$, this gives $z_2=1$ as well. This is in accordance with Lemma \ref{Z_p and Z_rho_phi (almost) the same} and the zero set of $p$ on $\T^3$, since
	\begin{equation*}
		\rho_{\phi}(z_1,z_2)\cap\T^2 = \bigg\{\Big(z_1,\frac{3+z_1^2}{1+3z_1^2}\Big)\, :\, z_1\in\T\backslash\{1\}\bigg\}.
	\end{equation*} 
    \end{example}
    
	\begin{theorem}{\label{theorem_rho=p1-p2}}
		Let $\phi$ be a $(m_1,m_2,1)$-degree RIF with no vertical line singularities. Then for a point $\zeta=(\zeta_1,\zeta_2,\zeta_3)\in\mathcal{Z}_p\cap\T^3$, we have that $\frac{\partial\phi}{\partial z_3}\in L^{\mathfrak p}_{loc}(\T^3)$ at $\zeta$ if and only if 
		\[\int_{B_{\varepsilon}(\zeta_1,\zeta_2)}\sigma(z_1,z_2)^{1-\mathfrak
        p} \,dm(z_1,z_2) <\infty \]
		if and only if
		\[\int_{B_{\varepsilon}(\zeta_1,\zeta_2)} \rho_{\phi}(z_1,z_2)^{1-\mathfrak p} \,dm(z_1,z_2) <\infty \]
		for sufficiently small $\varepsilon>0.$
	\end{theorem}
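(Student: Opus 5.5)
The plan is to reduce everything to the local form of Theorem \ref{thm equiv Lp and integral} and then compare $\rho_\phi$ with $\sigma$.

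\textbf{Step 1: localization.} First, I will localize Theorem \ref{thm equiv Lp and integral}. Its proof in \cite{bickel2022singularities} and \cite{sola_2023} computes $\int_{\T}|\partial\phi/\partial z_3(\hat z, z_3)|^{\mathfrak p}\,|dz_3|$ for fixed $\hat z=(z_1,z_2)\in\T^2$, using that $\phi_{\hat z}$ is a Möbius map with pole at $1/\overline{\psi^0(\hat z)}$. This gives, for $\hat z$ with $\rho_\phi(\hat z)$ small, a fibrewise estimate
\[
\int_{\T}\Big|\frac{\partial\phi}{\partial z_3}(\hat z,z_3)\Big|^{\mathfrak p}|dz_3|\approx \rho_\phi(\hat z)^{1-\mathfrak p}.
\]
By Fubini, local $L^{\mathfrak p}$-integrability at $\zeta$ is therefore equivalent to integrability of $\rho_\phi^{1-\mathfrak p}$ over a neighbourhood $B_\varepsilon(\zeta_1,\zeta_2)$ of the projected point. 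The restriction of $z_3$ to a neighbourhood of $\zeta_3$ causes no trouble. By Lemma \ref{Z_p and Z_rho_phi (almost) the same}(i), the fibre over $(\zeta_1,\zeta_2)$ meets $\mathcal Z_p\cap\T^3$ only at the single point $\zeta_3=-\tilde p_2/\tilde p_1$. Away from that point, $\partial\phi/\partial z_3$ is bounded on a slightly smaller neighbourhood, which can be absorbed. Alternatively, I would phrase ``$L^{\mathfrak p}_{loc}$ at $\zeta$'' as integrability over $B_\varepsilon(\zeta_1,\zeta_2)\times\T$, which is how the local version of Theorem \ref{thm equiv Lp and integral} is read. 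This gives the equivalence with the $\rho_\phi$-integral.

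\textbf{Step 2: comparing $\sigma$ and $\rho_\phi$.} Next, I will show the two integrals are comparable. Since $\phi$ has no vertical line singularities, $\tilde p_1$ does not vanish on $\T^2$; this is the observation made at the start of Section \ref{section on curve singularities} and used in Lemma \ref{Z_p and Z_rho_phi (almost) the same}(i). As $\T^2$ is compact, there are constants $0<c\le C$ with $c\le|\tilde p_1|^2\le C$ on $\T^2$. Then $\rho_\phi=\sigma/|\tilde p_1|^2$ gives
\[
C^{-1}\sigma\le\rho_\phi\le c^{-1}\sigma .
\]
Since $1-\mathfrak p\le 0$ and both functions are nonnegative by Lemma \ref{sigma real analytic}, raising to the power $1-\mathfrak p$ yields pointwise two-sided bounds $\rho_\phi^{1-\mathfrak p}\approx\sigma^{1-\mathfrak p}$, with constants depending only on $c,C,\mathfrak p$. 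Integrating over $B_\varepsilon(\zeta_1,\zeta_2)$ then shows that one integral is finite if and only if the other is. The case $\mathfrak p=1$ is trivial.

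\textbf{Main obstacle.} The delicate point is Step 1: making the localized version of Theorem \ref{thm equiv Lp and integral} precise. This requires the uniform fibrewise estimate above to hold with constants independent of $\hat z$ near $(\zeta_1,\zeta_2)$. I would derive it directly. Writing $\phi_{\hat z}(z_3)=\frac{\tilde p_1 z_3+\tilde p_2}{p_1+p_2z_3}$ gives
\[
\partial_{z_3}\phi=\frac{\tilde p_1p_1-\tilde p_2p_2}{(p_1+p_2z_3)^2}.
\]
On $\T^2$ we have $|\tilde p_1p_1-\tilde p_2p_2|=|\tilde p_1|^2-|\tilde p_2|^2=\sigma$, which follows from $p_j=\overline{\tilde p_{3-j}}\cdot$(monomial) on the torus. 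The standard integral
\[
\int_{\T}|p_1+p_2z_3|^{-2\mathfrak p}\,|dz_3|\approx(|p_1|-|p_2|)^{1-2\mathfrak p}
\]
then produces $\sigma^{\mathfrak p}\,(|p_1|+|p_2|)^{-1}\,(|p_1|-|p_2|)^{1-2\mathfrak p}\approx\sigma^{1-\mathfrak p}$. This uses that $|p_1|+|p_2|$ is bounded above and below, which in turn again relies on the absence of vertical line singularities. This computation also gives the $\sigma$-equivalence directly.
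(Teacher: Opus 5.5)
Your proposal is correct and follows essentially the same route as the paper. The paper cites the local form of the integral criterion and then compares $\rho_\phi=\sigma/|\tilde p_1|^2$ with $\sigma$, using that $|\tilde p_1|^2$ is bounded away from zero (and, implicitly, above) on $\T^2$ when there are no vertical lines; your fibrewise computation of $\int_{\T}|\partial\phi/\partial z_3|^{\mathfrak p}\,|dz_3|$ just makes that cited localization self-contained. One small slip: on $\T^2$ the relation is $p_j=z_1^{m_1}z_2^{m_2}\overline{\tilde p_j}$ with the same index $j$, not $3-j$, and it is this relation that gives $|\tilde p_1p_1-\tilde p_2p_2|=|\tilde p_1|^2-|\tilde p_2|^2=\sigma$.
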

	\begin{proof}
        The first equivalence follows from \cite[Theorem 2.1]{bickel2022singularities} the same way as Theorem \ref{thm equiv Lp and integral}. Secondly, since $\zeta\in\mathcal{Z}_p\cap\T^3$, Lemma \ref{Z_p and Z_rho_phi (almost) the same} gives $(\zeta_1,\zeta_2)\in \mathcal{Z}_{\rho_{\phi}}\cap\T^2$.
		Since $\phi$ contains no vertical line singularities on $\T^3$, the zero sets of $\rho_{\phi}$ and $\sigma$ are equal. The function $|\tilde{p}_1|^2$ is bounded from below on $\T^2$, so the integral 
        \[\int_{B_{\varepsilon}(\zeta_1,\zeta_2)}\bigg(\frac{\sigma(z_1,z_2)}{|\tilde{p}_1(z_1,z_2)|^2}\bigg)^{1-\mathfrak p} \,dm(z_1,z_2) \]
        converges precisely when
        \[\int_{B_{\varepsilon}(\zeta_1,\zeta_2)}{\sigma(z_1,z_2)}^{1-\mathfrak p} \,dm(z_1,z_2) \]
        does.
	\end{proof}

    \begin{remark}{\label{remark p in Lp^1}}
    For a general $n$-dimensional rational inner function $\phi$, it is important to note that around a  singularity $\zeta\in\T^n$, the partial derivative $\frac{\partial \phi}{\partial z_k}\in L^1_{loc}(\T^n)$ for all $k\in\{1,\dots,n\}$. This was shown for two variables in \cite[Proposition 4.7.]{Bickel_2017}, in a proof that used a restriction to a Blaschke product, on which the argument principle was then applied. The general $n$-dimensional case was then proven analogously in \cite[Lemma 3]{linusbergqvist}.
    \end{remark}
	
	\begin{lemma}{\label{lemma_rho}}
		Assume $\phi$ has k vertical line singularities at $\{\xi^1\}\times\T,\dots,\{\xi^k\}\times\T\subset\T^3$. Then at all points $\zeta\in\mathcal{Z}_p\cap\T^3$ not contained in $\{\xi^1,\dots,\xi^k\}\times\T$,
        \[\frac{\partial\phi}{\partial z_3}\in L^{\mathfrak p}_{loc}(\T^3) \iff \int_{B_{\varepsilon}(\zeta_1,\zeta_2)} \rho_{\phi}(z_1,z_2)^{1-\mathfrak p} \,dm(z_1,z_2) <\infty \]
		for sufficiently small $\varepsilon>0.$
	\end{lemma}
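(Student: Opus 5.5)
The plan is to localize away from the vertical lines and then repeat the argument of Theorem \ref{theorem_rho=p1-p2}. Fix $\zeta=(\zeta_1,\zeta_2,\zeta_3)\in\mathcal{Z}_p\cap\T^3$ with $(\zeta_1,\zeta_2)\notin\{\xi^1,\dots,\xi^k\}$. By Lemma \ref{finitely VLS} and the remark preceding it, $\mathcal{Z}_{|\tilde p_1|^2}\cap\T^2$ is a finite set. The inequality (\ref{inequalities for p1 and p2}) shows that every zero of $\tilde p_1$ on $\T^2$ is also a zero of $\tilde p_2$, and hence gives a vertical line singularity. So $\mathcal{Z}_{\tilde p_1}\cap\T^2=\{\xi^1,\dots,\xi^k\}$. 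Since $(\zeta_1,\zeta_2)$ lies at positive distance from this finite set, we can choose $\varepsilon_0>0$ so small that $\overline{B_{\varepsilon_0}(\zeta_1,\zeta_2)}$ contains no $\xi^j$. On this compact ball, $|\tilde p_1|^2$ is continuous and nonvanishing, so it satisfies $c\le|\tilde p_1|^2\le C$ for constants $0<c\le C$. Consequently $\rho_\phi=\sigma/|\tilde p_1|^2$ is real-analytic there, and Lemma \ref{Z_p and Z_rho_phi (almost) the same}(ii) gives $(\zeta_1,\zeta_2)\in\mathcal{Z}_{\rho_\phi}$.

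Next we invoke the local form of \cite[Theorem 2.1]{bickel2022singularities}, exactly as in the first equivalence of Theorem \ref{theorem_rho=p1-p2}. The point is that the local version of Theorem \ref{thm equiv Lp and integral} only uses the one-variable slice structure of $\phi$ in $z_3$ over points $(z_1,z_2)$ in a neighbourhood. Over the ball this structure is the Möbius map $z_3\mapsto(\tilde p_1 z_3+\tilde p_2)/(p_1+p_2z_3)$, which is well defined because $\tilde p_1\neq0$ there. This yields
\[\frac{\partial\phi}{\partial z_3}\in L^{\mathfrak p}_{loc}(\T^3)\text{ at }\zeta \iff \int_{B_\varepsilon(\zeta_1,\zeta_2)}(1-|\psi^0|^2)^{1-\mathfrak p}\,dm<\infty,\]
where $1-|\psi^0|^2=\rho_\phi$. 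To pass between $\rho_\phi$ and $\sigma$ if needed, use $C^{\mathfrak p-1}\sigma^{1-\mathfrak p}\le \rho_\phi^{1-\mathfrak p}\le c^{\mathfrak p-1}\sigma^{1-\mathfrak p}$ on the ball, so the two integrals converge together. For $\mathfrak p=1$ both sides are trivially finite, consistent with Remark \ref{remark p in Lp^1}.

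The main obstacle is justifying that the local statement of \cite[Theorem 2.1]{bickel2022singularities} really applies near $\zeta$ even though $\phi$ has vertical line singularities elsewhere on $\T^3$. Global hypotheses there, such as analyticity of $\rho_\phi$ on all of $\T^2$, fail. I would handle this by rederiving the local estimate directly. On $\T^2\cap B_\varepsilon\times\T$ one computes
\[\left|\frac{\partial\phi}{\partial z_3}\right|=\frac{|\tilde p_1|^2-|\tilde p_2|^2}{|p_1+p_2z_3|^2}\]
using $|p_j|=|\tilde p_j|$, or the analogous Möbius derivative formula. Integrating the $\mathfrak p$-th power in $z_3$ over $\T$ then gives, by the standard Poisson-kernel computation $\int_\T |1-\bar a z|^{-2\mathfrak p}|dz|\approx(1-|a|^2)^{1-2\mathfrak p}$ with $a=-p_2/p_1$, a quantity comparable to $\rho_\phi^{1-\mathfrak p}$ with constants uniform on the ball. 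Fubini then finishes the proof, since every constant used depends only on $c$ and $C$.
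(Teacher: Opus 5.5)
Your proposal is correct and follows the paper's proof. The paper simply notes that $\tilde p_1(\zeta_1,\zeta_2)\neq 0$ at such points and reruns the argument of Theorem \ref{theorem_rho=p1-p2}. You additionally make the localization explicit and verify the cited local equivalence directly, via $|\partial\phi/\partial z_3|=\sigma/|p_1+p_2z_3|^2$ and the estimate $\int_\T|1-az|^{-2\mathfrak p}|dz|\approx(1-|a|^2)^{1-2\mathfrak p}$. That computation does give $\int_\T|\partial\phi/\partial z_3|^{\mathfrak p}|dz_3|\approx\rho_\phi^{1-\mathfrak p}$, with constants depending only on $\mathfrak p$.

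One small slip: since $\rho_\phi^{1-\mathfrak p}=\sigma^{1-\mathfrak p}|\tilde p_1|^{2(\mathfrak p-1)}$ and $\mathfrak p\geq 1$, the correct bounds are $c^{\mathfrak p-1}\sigma^{1-\mathfrak p}\le\rho_\phi^{1-\mathfrak p}\le C^{\mathfrak p-1}\sigma^{1-\mathfrak p}$. You have $c$ and $C$ interchanged, which is harmless for the conclusion.
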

	\begin{proof}
		Such points $\zeta$ which are not contained in vertical line singularities, but still contained in $\mathcal{Z}_p\cap\T^3$, are either point singularities or contained in some curve singularity. In both cases, $\tilde{p}_1(\zeta_1,\zeta_2)\neq 0$, so the statement follows by the proof of Theorem \ref{theorem_rho=p1-p2}.
	\end{proof}

    We now turn to the question of global partial derivative integrability, namely, determining when $\frac{\partial \phi}{\partial z_3}\in L^{\mathfrak p}(\T^3)$. The following theorem stated in \cite[Theorem 6.6]{demailly2012complex}, will be of crucial importance in relating the local integrability properties to global ones. We refer the reader to \cite[Chapter 6.5]{tastybits} for the relevant terminology used in the theorem.
	
	\begin{theorem}{\label{Demailly}}
		Let M be a complex manifold with dim$_\mathbb{C}M=n$, let $(A,\xi)$ be an analytic germ of pure dimension $n-1$ and let $A_j$, $1\leq j\leq N$, be its irreducible components. Let $\mathcal{O}_{M,\xi}$ denote the ring of germs of holomorphic functions around $\xi\in M$. Then for every $f\in \mathcal{O}_{M,\xi}$ such that $f^{-1}(0)\subset (A,\xi)$, there is a unique decomposition $f=ug_1^{l_1}\cdots g_N^{l_N}$, where $u$ is an invertible germ, $g_j$ are irreducible germs, and $l_j$ is the order of vanishing of $f$ at any point $z\in A_{j, \text{reg}}\backslash \bigcup_{k\neq j} A_k$.
	\end{theorem}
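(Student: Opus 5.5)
The plan is to work entirely in the local ring $\mathcal{O}_{M,\xi}\cong\C\{z_1,\dots,z_n\}$, obtain the decomposition from unique factorization, and then compute the exponents $l_j$ geometrically.

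\emph{Algebraic setup.} First I would recall that $\mathcal{O}_{M,\xi}$ is a unique factorization domain, which is the standard consequence of the Weierstrass preparation and division theorems. For each irreducible component $A_j$ of the pure $(n-1)$-dimensional germ $(A,\xi)$, the ideal $I(A_j)$ is prime. It has height one, because $\dim A_j=n-1$. In a UFD every height-one prime is principal, so $I(A_j)=(g_j)$ for an irreducible germ $g_j$. Conversely, let $h$ be any irreducible (hence prime) non-unit germ. After a generic linear change of coordinates, Weierstrass preparation shows that $V(h)$ is a hypersurface germ of pure dimension $n-1$. Rückert's Nullstellensatz then gives $I(V(h))=\sqrt{(h)}=(h)$, which is prime, so $V(h)$ is irreducible.

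\emph{Existence and uniqueness.} Factor $f=u\,h_1^{e_1}\cdots h_r^{e_r}$ in the UFD, with the $h_i$ pairwise non-associate irreducibles. Each $V(h_i)$ is an irreducible $(n-1)$-dimensional germ contained in $f^{-1}(0)\subset(A,\xi)$, so it must equal one of the components $A_j$. Then $(h_i)=I(A_j)=(g_j)$, so $h_i$ and $g_j$ are associate. Distinct $h_i$ correspond to distinct $A_j$, since they generate distinct prime ideals. Absorbing units into $u$, we get $f=u\,g_1^{l_1}\cdots g_N^{l_N}$, with $l_j=0$ for components on which $f$ does not vanish. Uniqueness (up to units) is exactly uniqueness of factorization in the UFD.

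\emph{Identifying $l_j$ with the vanishing order.} Take a representative of $A_j$ on a small neighbourhood and a point $z\in A_{j,\mathrm{reg}}\setminus\bigcup_{k\neq j}A_k$. Near $z$ each $g_k$ with $k\neq j$ is non-vanishing: its zero set is $A_k$, which does not pass through $z$. It therefore suffices to show that $g_j$ vanishes to order exactly one at $z$; then the order of $f$ at $z$ is $l_j$.

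To prove this, choose coordinates so that $g_j$ is a unit times an irreducible Weierstrass polynomial $P$ in $z_n$. Since $P$ is irreducible it is squarefree, so its $z_n$-discriminant is not identically zero. Hence $\partial P/\partial z_n\notin (P)=I(A_j)$, and $dg_j\neq0$ on a dense open subset of $A_{j,\mathrm{reg}}$. At such points $g_j$ is a local defining coordinate. Near any regular point, $g_j$ factors as a unit times $w^{m}$, where $w$ is a local defining function of the smooth hypersurface $A_j$. The integer $m$ is therefore locally constant along $A_{j,\mathrm{reg}}$. The set $A_{j,\mathrm{reg}}\setminus\bigcup_{k\ne j}A_k$ is connected for a small representative of the irreducible germ, since removing a proper analytic subset of an irreducible germ does not disconnect its regular part. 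So $m\equiv 1$ there.

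\emph{Main obstacle.} I expect the delicate step to be this last one: passing from the ideal-theoretic statement $I(A_j)=(g_j)$ at the single point $\xi$ to the pointwise statement that $g_j$ vanishes to order one at \emph{every} nearby regular point of $A_j$ off the other components. It needs either the connectedness of the regular locus of an irreducible germ, or alternatively Oka–Cartan coherence of the ideal sheaf of $A_j$, which gives directly that $g_j$ generates $I(A_j)$ at nearby points and hence equals a unit times a coordinate at regular points.
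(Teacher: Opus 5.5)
The paper gives no proof of this statement; it is quoted from Demailly's book, so there is no in-paper argument to compare yours with. Your proof is correct and follows the standard line: factoriality of $\mathcal{O}_{M,\xi}$, a principal ideal $(g_j)$ with $g_j$ irreducible for each hypersurface component $A_j$, matching the irreducible factors of $f$ with the components via R\"uckert's Nullstellensatz, and reading off $l_j$ at regular points.

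Treatments differ only in the last step. The shortest route is the alternative you mention. By Oka--Cartan coherence of the ideal sheaf of $A_j$, a representative of $g_j$ generates $\mathcal{I}_{A_j,z}$ for every $z$ near $\xi$. At a regular point that ideal is generated by a local coordinate, so $dg_j(z)\neq 0$ at \emph{every} point of $A_{j,\mathrm{reg}}$ near $\xi$, and no connectedness argument is needed.

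Your primary route only gets $dg_j\neq 0$ at some points of $A_j$ arbitrarily close to $\xi$. It then propagates $m=1$ using the local form $g_j=(\text{unit})\cdot w^m$ and the connectedness of $A_{j,\mathrm{reg}}\cap U$ for a fundamental system of neighbourhoods $U$. That connectedness is itself a theorem resting on the local parametrization of irreducible germs. So you trade coherence for it: the argument is slightly longer but more hands-on.

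Two simplifications are available:
\begin{itemize}
\item The discriminant plays no role. $\partial P/\partial z_n$ is a nonzero polynomial of $z_n$-degree $d-1<d$, so it cannot lie in $(P)$, by uniqueness in Weierstrass division.
\item The height-one argument can be avoided. The nonzero prime $I(A_j)$ contains some irreducible $h$, so $A_j\subset V(h)$. Both germs are irreducible of dimension $n-1$, hence $A_j=V(h)$, and $I(A_j)=(h)$ by what you already proved about $V(h)$.
\end{itemize}
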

	
	A subtle fact which will be useful to us is that the order of vanishing is constant along irreducible components of analytic germs. In our setting, this means that the integrability around an arbitrary point of a component of the zero set of $\rho_{\phi}$ on $\T^{n-1}$ governs the order of vanishing of $\rho_{\phi}$ around the point, which in turn dictates the integrability conditions around all other points of that irreducible component. This is the topic of the following theorem.
	
	\begin{theorem}{\label{all pts on component same Lp}}
        Let the zero set of $\sigma$ on $\T^2$ be of pure real dimension one so that $\mathcal{Z}_{\rho_{\phi}}\cap\T^2=\bigcup_{i=1} ^N \gamma_i$ for some $N\in\mathbb{N}$, where each $\gamma_i$ is a real smooth curve.
		Assume $\frac{\partial\phi}{\partial z_3}\in L^{\mathfrak p}_{loc}(\T^3)$ around a point $(\zeta_1,\zeta_2)\in\gamma_j\backslash \bigcup_{k\neq j} \gamma_k$, for some $j\in \{1,\dots,N\}$. Then $\frac{\partial\phi}{\partial z_3}\in L^{\mathfrak p}_{loc}(\T^3)$ at every $(z_1,z_2)\in\gamma_j\backslash \bigcup_{k\neq j} \gamma_k$.
	\end{theorem}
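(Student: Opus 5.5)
Away from vertical line singularities (where $|\tilde p_1|^2$ is bounded below), Theorem \ref{theorem_rho=p1-p2} and Lemma \ref{lemma_rho} reduce local $L^{\mathfrak p}$-integrability of $\frac{\partial\phi}{\partial z_3}$ at a point of $\gamma_j$ to finiteness of
\[\int_{B_\varepsilon(\zeta_1,\zeta_2)}\sigma(z_1,z_2)^{1-\mathfrak p}\,dm(z_1,z_2).\]
So the plan is to show that, near every point of $\gamma_j\backslash\bigcup_{k\neq j}\gamma_k$, $\sigma$ has the same transversal behaviour, namely $\sigma\approx \mathrm{dist}(\cdot,\gamma_j)^{l_j}$ with a fixed exponent $l_j$. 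Then the integral is finite iff $l_j(\mathfrak p-1)<1$, a condition independent of the chosen point.

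The first step is to put $\sigma$ into the framework of Theorem \ref{Demailly}. Write $z_k=e^{i\theta_k}$, so $\sigma$ becomes a real-analytic function $s(\theta_1,\theta_2)$ on $\mathbb{R}^2$ with real Taylor expansion. Complexify it to a holomorphic function $S(w_1,w_2)$ on a neighbourhood $M$ of the real torus in $\C^2$; concretely, replace $\bar z_k$ by $1/z_k$, exactly as in Example \ref{Example 5.2.}, which turns $\sigma$ into a Laurent polynomial. The zero set $A=S^{-1}(0)$ is a complex curve of pure dimension one. Its real trace contains $\bigcup_i\gamma_i$, and each smooth real curve $\gamma_j$ lies in a unique irreducible component $A_j$ of $A$, namely the complexification of $\gamma_j$. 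Because the irreducible decomposition of a germ persists along connected regular parts, I will use Theorem \ref{Demailly} at points $\xi\in\gamma_j$ to write $S=u\,g_1^{l_1}\cdots g_N^{l_N}$ locally. The exponent $l_j$ is the vanishing order of $S$ at regular points of $A_j$ off the other components. This order is locally constant on the connected set $A_{j,\mathrm{reg}}\backslash\bigcup_{k\ne j}A_k$, hence constant along $\gamma_j\backslash\bigcup_{k\neq j}\gamma_k$.

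Near such a point $\xi$, where $\gamma_j$ is smooth and meets no other component, I will choose real-analytic coordinates $(x,y)$ with $\gamma_j=\{y=0\}$. In these coordinates $\sigma=u\,y^{l_j}$ with $u$ nonvanishing, and $u>0$ because $\sigma\ge0$ by Lemma \ref{sigma real analytic}. In particular $l_j$ is even. Then
\[\int_{B_\varepsilon}\sigma^{1-\mathfrak p}\,dm\approx\int_{-\varepsilon}^{\varepsilon}|y|^{l_j(1-\mathfrak p)}\,dy,\]
which is finite iff $l_j(\mathfrak p-1)<1$. If this holds at one point $(\zeta_1,\zeta_2)$, it holds at every point of $\gamma_j\backslash\bigcup_{k\ne j}\gamma_k$, which is what the theorem claims.

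The main obstacle will be the passage from real to complex. I need to check that $\gamma_j$ genuinely corresponds to one irreducible component $A_j$, and that the points of $\gamma_j\backslash\bigcup_{k\ne j}\gamma_k$ are regular points of $A$ lying on no other complex component. A different complex component could meet the real torus in isolated points of $\gamma_j$ without contributing a real curve, so the first thing I will try is to handle these finitely many exceptional points (finite by analyticity) separately. If the regular set $A_{j,\mathrm{reg}}\backslash\bigcup_{k\neq j}A_k$ is disconnected, constancy of $l_j$ still holds, since the exponent in the global factorization on the irreducible component is the same on all of it.

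At an exceptional point the exponent $l_j$ is unchanged, but $\sigma$ acquires additional factors, and I must verify that these only change $\sigma$ on a set small enough not to destroy the integrability conclusion. If that verification fails, I will instead restrict the statement to the generic points of $\gamma_j$.
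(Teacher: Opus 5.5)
The gap is the one you flag yourself, and your first proposed repair cannot succeed because the statement is actually false at those points.

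Your route is otherwise the paper's: reduce to $\int\sigma^{1-\mathfrak p}$ via Theorem \ref{theorem_rho=p1-p2}, complexify, apply Theorem \ref{Demailly}, use constancy of the exponent $l_j$ along the irreducible component, and integrate the normal form $u\,y^{l_j}$. Your complexification by $\bar z_k\mapsto 1/z_k$ makes $\{S=0\}$ an algebraic curve in $(\C\setminus\{0\})^2$. That, and your parity/positivity argument for $l_j$ and $u$, are cleaner than the paper's chart on $(-\pi,\pi)^2$.

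Take a point $\xi\in\gamma_j$ where another irreducible component of $\{S=0\}$ meets $\T^2$ only at $\xi$. There $\sigma=y^{l_j}h$, with $h\geq 0$ real-analytic and vanishing on $\T^2$ only at $\xi$. The factor $h$ does not change $\sigma$ ``on a small set''. It multiplies $\sigma$ on a whole neighbourhood of $\xi$ and raises the total vanishing order there. So the radial part of $\int\sigma^{1-\mathfrak p}$ can diverge even though the transversal exponent is still $l_j$. Your sentence \emph{hence constant along $\gamma_j\backslash\bigcup_{k\neq j}\gamma_k$} silently identifies the real curves $\gamma_k$ with the complex components $A_k$, and this is exactly where it fails.

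This really happens. The paper's proof has the same gap: it asserts the normal form $(\text{coordinate})^{l_j}(\text{const}+\dots)$ at every point of $\gamma_j$, and never considers components of $\{F=0\}$ that meet $\T^2$ only in isolated points of $\gamma_j$.

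Here is a counterexample. Let $\phi_a$ be the favourite example from the introduction, $\phi_b(z)=\frac{2z_1z_2z_3-z_1z_2-z_3}{2-z_1z_2-z_3}$, and $\phi(z)=\phi_a(z_1,z_2,\phi_a(z_1,z_2,\phi_b(z)))$. The computation in the proof of Lemma \ref{zero set of phi and phi^N the same} gives $M_\phi=M_{\phi_a}^2M_{\phi_b}$, so on $\T^2$
\[\sigma=\big(|3-z_1-z_2|^2-1\big)^2\cdot 2\,|1-z_1z_2|^2 ,\]
up to a factor bounded above and below, should a common factor cancel.
\begin{itemize}
\item $\mathcal Z_\sigma\cap\T^2=\{z_1z_2=1\}$ is a single smooth curve.
\item There are no vertical lines: the $(1,1)$ entry of the product matrix has no zeros on $\T^2$; for instance it equals $4$ at $(1,1)$.
\item Away from $(1,1)$, $\sigma\asymp\operatorname{dist}(\cdot,\{z_1z_2=1\})^2$, so the local condition is $\mathfrak p<\frac32$.
\item Near $(1,1)$, write $z_k=e^{i\theta_k}$. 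The first factor has positive definite Hessian, so $\sigma\asymp|\theta|^4(\theta_1+\theta_2)^2$, and $\int\sigma^{1-\mathfrak p}<\infty$ if and only if $\mathfrak p<\frac43$.
\end{itemize}

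So your fallback is not a concession but the correct theorem. The local index is constant on the points of $\gamma_j$ lying on no other irreducible component of the complexified zero set, which is all but finitely many points, and your argument proves exactly that.
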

	\begin{proof}
		For an arbitrary $\gamma_k$ in the mentioned set of curves, the smoothness of the curve guarantees that $\rho_{\phi}$ has no singularities on $\gamma_k$, so $|\tilde{p}_1|$ is non-vanishing on $\gamma_k$. This is true for $\gamma_j$ as well, and by Lemma \ref{lemma_rho}, 
        \vspace{-0.5pc}
        \[\int_{B_{\varepsilon}(\zeta_1,\zeta_2)} \rho_{\phi}(z_1,z_2)^{1-\mathfrak p} \,dm(z_1,z_2) <\infty \]
		for small enough $\varepsilon>0$. By Theorem \ref{theorem_rho=p1-p2} this also means that for such  $\varepsilon$
		\begin{align}
            \int_{B_{\varepsilon}(\zeta_1,\zeta_2)}\sigma(z_1,z_2)^{1-\mathfrak p} \,dm(z_1,z_2) <\infty. \label{equation rho_phi in the proof}
         \end{align}
        For $(z_1,z_2)\in \T^2$ we have
        \vspace{-0.5pc}
        \[\begin{cases}
              z_1=e^{i\theta_1}, \quad\theta_1\in (-\pi,\pi)\\
              z_2=e^{i\theta_2}, \quad \theta_2\in (-\pi,\pi).
        \end{cases}\]
        Then $\sigma(z_1,z_2)=\sigma(e^{i\theta_1},e^{i\theta_2})$ is a real-analytic function since it's a composition of three real-analytic functions $\theta$, $e^{i\theta_1}$ and $e^{i\theta_2}$, for all $\theta_1,\theta_2\in(-\pi,\pi)$. 
        
		Let $A:=\mathcal{Z}_{\sigma}\cap\T^2$. Then $(\gamma_i)_i$ are exactly the sets of regular points of each of the irreducible components $A_i$ of $A$, i.e.
        \vspace{-0.5pc}
        \[A_{reg}=\bigcup_{i=1}^N A_{i,reg}=\bigcup_{i=1}^N \gamma_i.\]
        By the sentence preceding Lemma \ref{finitely VLS}, zeros of $|\tilde{p}_1|$ are isolated, so the set of singular points $A_{\text{sing}}$ of $A$ is comprised of finitely many points. The set $A$ is contained in $\T^2$, and since all $\gamma_i$ are curves, $A$ is of pure real dimension $1$.

        Denote the domain of $\sigma$ by $U:=(-\pi,\pi)^2$. This is an open and connected in $\mathbb{R}^2$, so there exists a domain $V\in\mathbb{C}^2$ such that $U\subset V$, and a unique holomorphic function $F:V\to\mathbb{C}$ such that $F\big|_U=\sigma$. This is a classic complexification property that can be found, for example, in \cite[Proposition 3.1.3]{tastybits}.

        Now let $B:=\mathcal{Z}_F$. Then $F$ is analytic at all $z=(z_1,z_2)\in V$, which makes $(B,z)\supset F^{-1}(0)$ an analytic germ, meeting one of the conditions of Theorem \ref{Demailly}.  Moreover, $F$ is holomorphic in all small neighbourhoods of $z$ contained in $V$, thus $F\in\mathcal{O}_{\mathbb{C}^2,z}$. It's important to notice that $(\zeta_1,\zeta_2)\in V$ as well. By Theorem \ref{Demailly}, there is a unique decomposition of $F$ as
        \[F=vf_1^{l_1}\cdots f_N^{l_N}\]
        where each $l_i$ is the order of vanishing of $F$ along irreducible component $B_i$ of $B$, $v$ is invertible and $f_j$ are irreducible germs. 

        Since $A$ is comprised of a finite union of disjoint real curves, if we take a small enough neighbourhood $W$ of $\T^2$ in $\mathbb{C}^2$, we will see that $\mathcal{Z}_{\sigma}\cap W$ will be made up of a finite number of (possibly not disjoint) real surfaces, which after complexification become complex curves. This comes from the uniqueness of $F$ and the identity theorem for holomorphic functions, as well as the fact that holomorphic functions in two variables cannot have isolated zeros. By taking small enough neighbourhoods $W_i$ of each $\gamma_i$, we will have $\gamma_i\subset W_i\subset B_i$ for each $i\in(1,\dots,N)$, and the order of vanishing of $\sigma$ will stay constant along each $\gamma_i$ the same way it was for $F$ along each $B_i$. We get the decomposition $f=ug_1^{l_1}\cdots g_N^{l_N}$, for $u$ invertible and $g_i$ irreducible germs, and $l_i$ the order of vanishing along each $\gamma_i$.
        
        One of the hypotheses of the theorem is that $\frac{\partial\phi}{\partial z_3}\in L^{\mathfrak p}_{loc}(\T^3)$ around $(\zeta_1,\zeta_2)$, which implies that (\ref{equation rho_phi in the proof}) holds. The order of vanishing of $\sigma$ at $(\zeta_1,\zeta_2)$, which is precisely $l_j$, can be determined from the series expansion of $\sigma$ around $(\zeta_1,\zeta_2)$. From the definition of order of vanishing, this means that the lowest power of a term in the power series expansion of $\sigma$ around some other point $(z_1,z_2)\in\gamma_j$ will be exactly $l_j$ as well. By applying a real-analytic change of coordinates, without loss of generality we can assume $\gamma_j=\{z_2=0\}$. Since every polynomial $f$ can be written as 
        \[f(z)=\sum_nf_n(z)\]
        where $f_n$ are homogeneous polynomials with all terms of degree $n$,
        then for every $(z_1,z_2)\in\gamma_j$ we can write
        \vspace{-0.5pc}
        \begin{align}{\label{equation change of variables}}
            \sigma(z_1,z_2)= z_1^{l_j}(\text{const + higher degree terms)} 
        \end{align}
        in its series expansion around the point $(z_1,z_2)$. Now it immediately follows that
        \[\int_{B_{\varepsilon}(z_1,z_2)}\sigma(w_1,w_2)^{1-\mathfrak p} \,dm(w_1,w_2) <\infty \]
        and thus 
        \[\int_{B_{\varepsilon}(z_1,z_2)} \rho_{\phi}(w_1,w_2)^{1-\mathfrak p} \,dm(w_1,w_2) <\infty \]
		for all $(z_1,z_2)\in\gamma_j$ and small enough $\varepsilon>0$. Applying Lemma \ref{lemma_rho} gives the wanted result.
    \end{proof}

    In practice, the theorem above tells us that for $\phi$ such that the zero set of $\rho_{\phi}$ on $\T^2$ contains no isolated points, we can read off the integrability indices along entire smooth components of zero curves just by knowing local integrability around points where $\rho_{\phi}$ is particularly easy to handle. Those include points that are not intersection points of two or more curves, are not the first two coordinates of a vertical line in the zero set of $\phi$, but can also mean such points where the series expansion of $\rho_{\phi}$ admits a simpler form. 

    The last results of this section will give us a way to make use of vertical lines or, depending on the objective, evade the need to know the precise integrability conditions of the partial derivatives around them. 
    
    \begin{corollary}{\label{corollary 3 equivalences around the vertical line and its component}}
        Let $\phi$ be an RIF as before with a vertical line $\{(\xi_1,\xi_2)\}\times\T\subset \mathcal{Z}_p\cap\T^3$. Let the zero set of $\rho_{\phi}$ on $\T^2$ contain only smooth curves $(\gamma_i)_i$ (as in Theorem \ref{all pts on component same Lp}). If there is a smooth curve $\Gamma\in\mathcal{Z}_{\sigma}\cap\T^2$ such that $\Gamma=\{(\xi_1,\xi_2)\}\cup\gamma_i$ for some $i$, then for an arbitrary point $(w_1,w_2)\in\Gamma\,\backslash \big(\bigcup_{k\neq j} \gamma_k \cup \{(\xi_1,\xi_2)\}\big)$ the following are equivalent:
        \begin{enumerate}
            \item[(i)] $\int_{B_{\varepsilon}(\xi_1,\xi_2)} \sigma(z_1,z_2)^{1-\mathfrak p} \,dm(z_1,z_2) <\infty$
            \item[(ii)] $\int_{B_{\varepsilon}(w_1,w_2)} \sigma(z_1,z_2)^{1-\mathfrak p} \,dm(z_1,z_2) <\infty$
            \item[(iii)] $\int_{B_{\varepsilon}(w_1,w_2)} \rho_{\phi}(z_1,z_2)^{1-\mathfrak p} \,dm(z_1,z_2) <\infty$
        \end{enumerate} 
    \end{corollary}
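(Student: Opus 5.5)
The equivalence (ii)$\iff$(iii) is a local statement away from the vertical line, so I would obtain it exactly as in Lemma \ref{lemma_rho} and Theorem \ref{theorem_rho=p1-p2}. Since $(w_1,w_2)\neq(\xi_1,\xi_2)$, and the zeros of $|\tilde{p}_1|$ on $\T^2$ are isolated (sentence before Lemma \ref{finitely VLS}), we may shrink $\varepsilon$ so that $\overline{B_\varepsilon(w_1,w_2)}$ avoids all zeros of $\tilde{p}_1$. Then $|\tilde{p}_1|^2$ is bounded above and below by positive constants there. Since $\rho_\phi=\sigma/|\tilde{p}_1|^2$, the integrands $\rho_\phi^{1-\mathfrak p}$ and $\sigma^{1-\mathfrak p}$ are comparable on the ball, and the two integrals converge simultaneously.

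For (i)$\iff$(ii) I would run the argument of Theorem \ref{all pts on component same Lp}, now applied to $\sigma$ rather than $\rho_\phi$. This is legitimate because $\sigma$ is real-analytic on all of $\T^2$ (Lemma \ref{sigma real analytic}), including at $(\xi_1,\xi_2)$, where only $\rho_\phi$ is singular. The steps are as follows.
\begin{enumerate}
\item Complexify $\sigma$ to a holomorphic $F$ on a neighbourhood $V$ of $\T^2$ (in angular coordinates).
\item Observe that the smooth curve $\Gamma\subset\mathcal{Z}_\sigma\cap\T^2$ complexifies to a single irreducible component $B_\Gamma$ of $\mathcal{Z}_F$. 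The point is that $\Gamma$ is a connected smooth real-analytic arc through $(\xi_1,\xi_2)$, so the identity theorem propagates the germ along it.
\item Apply Theorem \ref{Demailly} to get the decomposition $F=vf_1^{l_1}\cdots f_N^{l_N}$. The vanishing order $l_\Gamma$ is then constant along $B_{\Gamma,\mathrm{reg}}\setminus\bigcup_{k}B_k$, and in particular it is the same at $(w_1,w_2)$ and at $(\xi_1,\xi_2)$.
\end{enumerate}

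The subtle point is $(\xi_1,\xi_2)$ itself. Theorem \ref{Demailly} only guarantees constancy at points of $\Gamma$ that are not on other components. I would use the hypothesis that $\Gamma$ is smooth through $(\xi_1,\xi_2)$ and that $\mathcal{Z}_\sigma\cap\T^2$ consists only of the curves $\gamma_i$ and this point. Near $(\xi_1,\xi_2)$ the only other zeros of $\sigma$ could come from other $\gamma_k$ passing through it, and I would exclude these by taking $(\xi_1,\xi_2)$ not in $\bigcup_{k\neq i}\overline{\gamma_k}$, as implicit in the statement.

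With that in place, I would straighten $\Gamma$ near each of the two points by a real-analytic change of coordinates to $\{\theta_2=0\}$. The expansion \eqref{equation change of variables} then gives $\sigma\asymp|\theta_2|^{l_\Gamma}$ on a small ball, with a non-vanishing unit factor, the same at both points. The local integral of $\sigma^{1-\mathfrak p}$ therefore converges if and only if $\int_{-\varepsilon}^{\varepsilon}|t|^{l_\Gamma(1-\mathfrak p)}\,dt<\infty$, that is, if and only if $l_\Gamma(\mathfrak p-1)<1$. This condition is independent of the base point, which proves (i)$\iff$(ii). The Jacobian of the change of coordinates is bounded above and below, so it does not affect convergence.

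I expect the main obstacle to be justifying that the unit factor is bounded away from zero near $(\xi_1,\xi_2)$, i.e.\ that $\sigma$ vanishes there only along $\Gamma$ and to the generic order $l_\Gamma$ with no extra vanishing concentrated at the point. The vanishing of $\tilde{p}_1$ there is a priori a source of higher-order behaviour. My first attempt would be to argue that, since $\sigma$ is globally real-analytic and its zero germ at $(\xi_1,\xi_2)$ is exactly the smooth arc $\Gamma$, the factor $g_\Gamma$ generates the ideal of $\Gamma$ locally. Then $\sigma/g_\Gamma^{l_\Gamma}$ is real-analytic with no zeros in a punctured neighbourhood. If it still vanished at the isolated point, nonnegativity would force a zero of even order. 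I would then argue that such a point zero cannot occur, because the complexification has no isolated zeros while the real zero set near the point is only $\Gamma$. If that argument fails, I would add the nondegeneracy as an implicit hypothesis, matching the smoothness assumption on $\Gamma$.
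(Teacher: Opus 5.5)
Your overall route is the same as the paper's. For (ii)$\iff$(iii) you compare $\rho_\phi=\sigma/|\tilde p_1|^2$ with $\sigma$ on a ball that avoids the zeros of $\tilde p_1$. The paper cites Theorem \ref{theorem_rho=p1-p2} here; your Lemma \ref{lemma_rho}-type local argument is the more accurate justification, since that theorem assumes there are no vertical lines. For (i)$\iff$(ii) you rerun the proof of Theorem \ref{all pts on component same Lp} for $\sigma$, which stays real-analytic through $(\xi_1,\xi_2)$: complexify, then apply Theorem \ref{Demailly}. The paper's proof consists of exactly this, supported only by the remark that $\Gamma$ meets no other $\gamma_k$. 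It does not address the issue you isolate at $(\xi_1,\xi_2)$. That issue is genuine, and your proposed way of settling it does not work.

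The step that fails is the claim that $u=\sigma/g_\Gamma^{l_\Gamma}$ cannot vanish at $(\xi_1,\xi_2)$ ``because the complexification has no isolated zeros while the real zero set near the point is only $\Gamma$.''

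\begin{itemize}
\item A complex curve can meet the real slice in a single point. For instance, $\theta_1^2+\theta_2^2$ vanishes on $\{\theta_1=\pm i\theta_2\}\subset\C^2$ but only at the origin of $\mathbb{R}^2$.
\item Hence $\sigma=\theta_2^2(\theta_1^2+\theta_2^2)^k$ is real-analytic and nonnegative, has real zero set exactly $\{\theta_2=0\}$ and generic order $l_\Gamma=2$, yet has order $2+2k$ at the origin. In the language of Theorem \ref{Demailly}, the origin lies on a second irreducible component of $\mathcal Z_F$ that is invisible on $\T^2$. The hypothesis on the $\gamma_k$ only rules out real components, so constancy of the order gives nothing at that point.
\item This changes the answer. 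On a ball centred at the origin one needs, in addition, $(2+2k)(\mathfrak p-1)<2$. So for $k\ge 2$ there is a range of $\mathfrak p$ in which (ii) holds and (i) fails.
\end{itemize}

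What you actually need is $\operatorname{ord}_{(\xi_1,\xi_2)}\sigma=l_\Gamma$. Real-analyticity, nonnegativity and the shape of the real zero set cannot give this; it has to come from the RIF structure. One possible way in:
\begin{itemize}
\item A scaling and Hurwitz argument, using $p_1\neq 0$ and $|p_2|\le |p_1|$ on $\D^2$, shows that the linear parts of $p_1,p_2$ at $(\xi_1,\xi_2)$ satisfy $\ell_2=\kappa\ell_1$ with $|\kappa|\le 1$.
\item Hence the quadratic part of $\sigma=|p_1|^2-|p_2|^2$ on $\T^2$ at that point is $(1-|\kappa|^2)|\ell_1|^2$.
\item This is nonzero exactly when $p$ vanishes only to first order along $\{(\xi_1,\xi_2)\}\times\T$. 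In that case $\operatorname{ord}\sigma=2=l_\Gamma$, and the equivalence follows.
\end{itemize}
The degenerate case is covered neither by your argument nor by the paper's. There your fallback nondegeneracy hypothesis is genuinely needed.

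The same caveat applies at the at most finitely many points of $\Gamma$ where another complex component of $\mathcal Z_F$ touches $\T^2$. So ``arbitrary $(w_1,w_2)$'' is only justified for generic points.
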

    \begin{proof}
        The equivalence between $(ii)$ and $(iii)$ was shown in Theorem $\ref{theorem_rho=p1-p2}$. The proof of the equivalence between $(i)$ and $(ii)$ follows the same steps as the proof of Theorem \ref{all pts on component same Lp} since $\sigma$ is a real-analytic function on a smooth curve $\Gamma$, which doesn't intersect any of the other smooth curves $\gamma_k$, for $k\neq i$. Thus it's possible to extend it to a holomorphic function, on which we can apply Theorem \ref{Demailly}.
    \end{proof}

    \begin{example}{\label{Example 5.2. global Lp}}
        Let's look at Example \ref{Example 5.2.} from before. We have
        \[\mathcal{Z}_p \,\cap \T^3 = \bigg\{(0,0,u) \, : \, u\in[-\pi,\pi]\} \,\cup\, \{s,\arg m(e^{is}),\pi) \,:\,s\in[-\pi,\pi] \, ,\, m(z)=\frac{3+z^2}{1+3z^2}\bigg\}\]
        so that 
        \vspace{-0.5pc}
        \[\mathcal{Z}_{\rho_{\phi}}\cap\T^2 = \bigg\{\Big(\xi_1,\frac{3+\xi_1^2}{1+3\xi_1^2}\Big)\, :\, \xi_1\in\T\backslash\{1\}\bigg\} \label{zero of rho}\]
        and $\mathcal{Z}_p \,\cap \T^3$ contains the vertical line $\{(1,1)\}\times\T$. On $\T^2$,  $\mathcal{Z}_{\rho_{\phi}}$ consists of a single smooth curve (Figure \ref{fig:5.2.rho_phi}), so the integrability index is the same along all points in $\mathcal{Z}_{\rho_{\phi}} \,\cap \T^2$. For example, take $(z_1,z_2)=(-1,1)\in\mathcal{Z}_{\rho_{\phi}} \,\cap \T^3$, or in other words, $(\theta_1,\theta_2)=(\pi,0)$, for $(z_1,z_2)=(e^{i\theta_1}, e^{i\theta_2})$. Notice that when $\theta_1$ is close to $\pi$, the point $\pi-\theta_1$ will be close to 0. Thus, after simplifying the trigonometric identities, around the point $(\pi,0)$ we have the expansions:
    \begin{align}
        \quad &\rho_{\phi}(\pi-\theta_1,\theta_2) = \nonumber\\
        & = \frac{20+12\cos2\theta_1-2\cos(2\theta_1+\theta_2)-12\cos\theta_2-18\cos(2\theta_1-\theta_2)}{24+4\cos\theta_1+12\cos2\theta_1-2\cos(2\theta_1+\theta_2)-10\cos\theta_2+4\cos(\theta_1-\theta_2)-16\cos(2\theta_1-\theta_2)}\nonumber \\ 
        & = (\theta_1-\theta_2)^2+\mathcal{O}(\|\theta\|^3 \nonumber\\
        \text{and }\nonumber\\
        & \hspace*{-1em} \quad\sigma(\pi-\theta_1,\theta_2)=16(\theta_1-\theta_2)^2+\mathcal{O}(\|\theta\|^3) \label{equation Example 5.2. sigma Lp}
    \end{align}
    for $(\theta_1,\theta_2)$ close to $(0,0)$. Here the change of variables as in (\ref{equation change of variables}) is easy to do, that is, by defining $\mu:=\theta_1-\theta_2$, we have $\rho_\phi(\pi-\theta_1,\theta_2)\approx \mu^2$ and $\sigma(\pi-\theta_1,\theta_2)\approx 16\mu^2$. The order of vanishing of the function $\rho_{\phi}(\pi-\theta_1,\theta_2)$ around the origin is two, so this means that $\frac{\partial\phi}{\partial z_3}\in L^{\mathfrak p}_{loc}(\T^3)$ around $(\pi,0,\tau)$, $\tau\in\T$ if and only if 
    \vspace{-0.4pc}
    \[2(1-\mathfrak p)>-1 \iff 2-2\mathfrak p>-1 \iff \mathfrak p<\frac{3}{2}.\]
    
    \noindent
    For the sake of comparison, take $(z_1,z_2)=(-i,-1)\in\mathcal{Z}_{\rho_{\phi}} \,\cap \T^2$, i.e. $(\theta_1,\theta_2)=(-\frac{\pi}{2},\pi)$. Then
    \begin{align*}
        \quad\rho_{\phi}&\Big(-\frac{\pi}{2}-\theta_1,\pi-\theta_2\Big)= (4\theta_1+\theta_2)^2+\mathcal{O}(\|\theta\|^3)
    \end{align*}
    for $(\theta_1,\theta_2)$ close to $(0,0)$. Now the change of variables $\mu=4\theta_1+\theta_2$ gives $\rho_{\phi}(-\frac{\pi}{2}-\theta_1,\pi-\theta_2)\approx\mu^2$ as well, so $\frac{\partial\phi}{\partial z_3}\in L^{\mathfrak p}_{loc}(\T^3)$ around $(\frac{\pi}{2},\pi,\tau)$, $\tau\in\T$ if and only if $\mathfrak p<\frac{3}{2}$, which is in accordance with Theorem \ref{all pts on component same Lp}. 
     \begin{figure}[h]
		\centering
		\includegraphics[width=4.5cm]{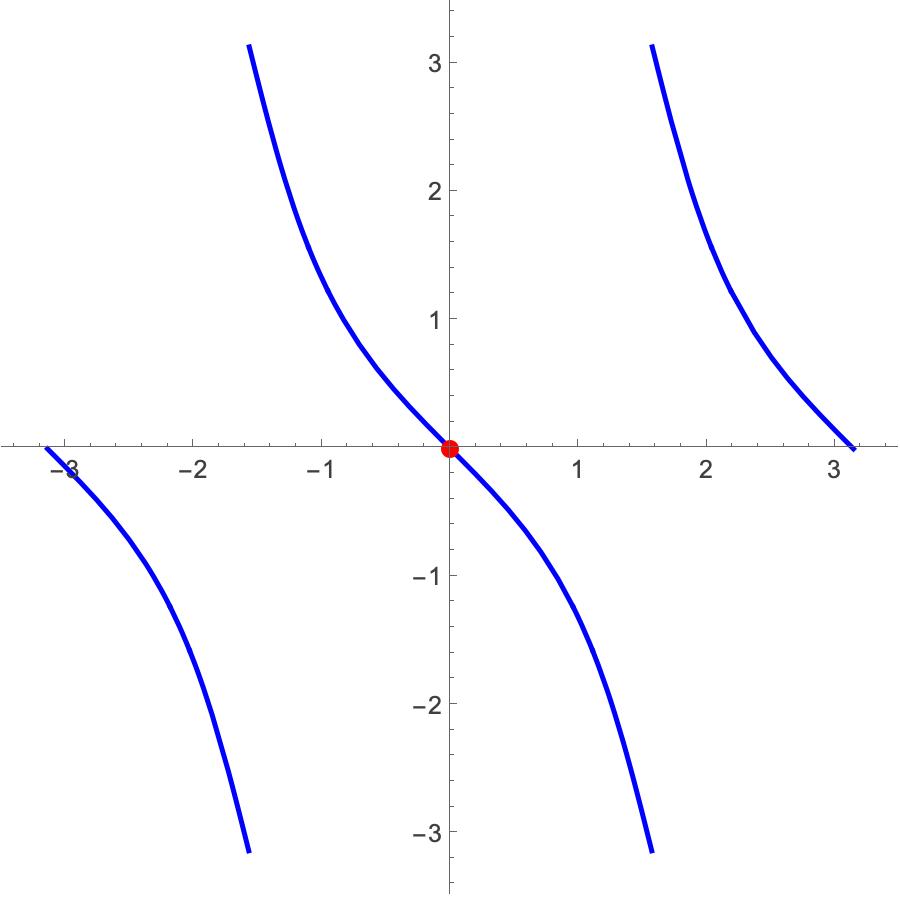}
		\caption{Example \ref{Example 5.2.}. Zero set of $\rho_{\phi}$ on $\T^2$, with the singular point $(0,0)$.}
		\label{fig:5.2.rho_phi}
	\end{figure}
    \end{example}

   \begin{theorem}{\label{situation better in vertical lines}}
       Let $\phi$ be an RIF that contains a vertical line singularity $\{(\xi_1,\xi_2)\}\times\T\subset \mathcal{Z}_p\cap\T^3$, with the zero set of $\rho_{\phi}$ on $\T^2$ as in Theorem \ref{all pts on component same Lp}. If 
       \[\int_{B_{\varepsilon}(\xi_1,\xi_2)} \sigma(z_1,z_2)^{1-\mathfrak p} \,dm(z_1,z_2) <\infty\]
        for some small $\varepsilon>0$, then 
       \[\int_{B_{\delta}(\xi_1,\xi_2)} \rho_{\phi}(z_1,z_2)^{1-\mathfrak p} \,dm(z_1,z_2) <\infty\]
       for some $0<\delta\leq \varepsilon$.\\
       Specifically, this means that the local $L^{\mathfrak p}_{loc}(\T^3)$-integrability of $\frac{\partial\phi}{\partial z_3}$ at $(\xi_1,\xi_2,\xi_3)$ cannot make the global $L^{\mathfrak p}(\T^3)$-integrability of $\frac{\partial\phi}{\partial z_3}$ worse.
   \end{theorem}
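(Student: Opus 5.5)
The plan is to compare $\rho_\phi$ and $\sigma$ pointwise near $(\xi_1,\xi_2)$, using that $\rho_\phi=\sigma/|\tilde p_1|^2$ and that $|\tilde p_1|^2$ is bounded above on $\T^2$. Since $1-\mathfrak p\le 0$ (recall $\mathfrak p\ge 1$ by Remark \ref{remark p in Lp^1}), the map $t\mapsto t^{1-\mathfrak p}$ is non-increasing on $(0,\infty)$. Therefore an inequality of the form $\rho_\phi\ge c\,\sigma$ yields $\rho_\phi^{1-\mathfrak p}\le c^{1-\mathfrak p}\sigma^{1-\mathfrak p}$, and this is the direction we want.

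First, $\tilde p_1$ is a polynomial, so $M:=\max_{\T^2}|\tilde p_1|^2<\infty$, and $M>0$ because $\tilde p_1\not\equiv 0$. For $(z_1,z_2)\in\T^2$ with $\tilde p_1(z_1,z_2)\neq 0$ we then get $\rho_\phi(z_1,z_2)=\sigma(z_1,z_2)/|\tilde p_1(z_1,z_2)|^2\ge \sigma(z_1,z_2)/M$.

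Second, we check that the exceptional sets are null. By the remark preceding Lemma \ref{finitely VLS}, $\mathcal Z_{|\tilde p_1|^2}\cap\T^2$ is finite. By the hypothesis inherited from Theorem \ref{all pts on component same Lp}, the zero set of $\sigma$ on $\T^2$ is a finite union of smooth curves. Both sets have planar measure zero, so the inequality holds $m$-almost everywhere on $B_\delta(\xi_1,\xi_2)$. Consequently
\[\int_{B_{\delta}(\xi_1,\xi_2)}\rho_\phi^{1-\mathfrak p}\,dm\le M^{\mathfrak p-1}\int_{B_{\delta}(\xi_1,\xi_2)}\sigma^{1-\mathfrak p}\,dm<\infty,\]
and we can take any $0<\delta\le\varepsilon$, for instance $\delta=\varepsilon$. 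There is no genuine obstacle here. The one point needing care is that $\rho_\phi$ is undefined at $(\xi_1,\xi_2)$ itself, as a $0/0$ expression; this is a single point, irrelevant for the integral.

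Third, for the global statement, cover $\mathcal Z_p\cap\T^3$ by finitely many neighbourhoods. Away from the finitely many vertical lines (Lemma \ref{finitely VLS}), Lemma \ref{lemma_rho} together with Theorem \ref{all pts on component same Lp} governs local integrability through $\rho_\phi$. At a vertical line $\{(\xi_1,\xi_2)\}\times\T$, the local criterion from \cite[Theorem 2.1]{bickel2022singularities}, as in Theorem \ref{theorem_rho=p1-p2}, is phrased via $1-|\psi^0|^2=\rho_\phi$. The estimate above shows that finiteness of the $\sigma$-integral there, which by Corollary \ref{corollary 3 equivalences around the vertical line and its component} is equivalent to finiteness along the adjoining component $\gamma_i$, already forces finiteness of the $\rho_\phi$-integral. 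So the neighbourhood of the vertical line imposes no condition beyond those coming from the curve components. Compactness of $\T^3$ then turns the finitely many local conditions into the global one.

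The step I would treat most carefully is this last identification: that the local criterion at the vertical line is indeed given by the $\rho_\phi$-integral, even though $\tilde p_1$ vanishes at $(\xi_1,\xi_2)$. I would justify it by noting that Theorem \ref{thm equiv Lp and integral} is stated for $1-|\psi^0|^2$ directly, with no nonvanishing assumption on $\tilde p_1$, and that it localizes as described after that theorem.
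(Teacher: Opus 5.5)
Your proposal is correct and follows the paper's argument: an upper bound on $|\tilde{p}_1|^2$ gives $\rho_{\phi}^{1-\mathfrak p}\le M^{\mathfrak p-1}\sigma^{1-\mathfrak p}$. You use the global maximum $M$ on $\T^2$ where the paper uses a local bound $\kappa$ near $(\xi_1,\xi_2)$, which is why you can take $\delta=\varepsilon$. Your account of the global consequence via Corollary \ref{corollary 3 equivalences around the vertical line and its component} is more explicit than the paper's one-line remark, but it tacitly assumes that corollary's extra hypothesis, namely that $(\xi_1,\xi_2)$ lies on a smooth curve $\Gamma=\{(\xi_1,\xi_2)\}\cup\gamma_i$.
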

   \begin{proof}
       The function $|\tilde{p}_1|^2$ is continuous with a zero in $(\xi_1,\xi_2)$, so there exists a neighbourhood of $(\xi_1,\xi_2)$ such  that $|\tilde{p}_1|^2$ is bounded. Moreover, the function is bounded on any compact set, so for each $\kappa>0$, there exists an $\varepsilon'>0$ such that $|\tilde{p}_1(z_1,z_2)|^2<\kappa$, for all $(z_1,z_2)\in B_{\varepsilon'}(\xi_1,\xi_2)$. Take $\delta:=\min\{\varepsilon,\varepsilon'\}$. Since $|\sigma|^2\geq 0$, clearly
       \[\int_{B_{\delta}(\xi_1,\xi_2)} \sigma(z_1,z_2)^{1-\mathfrak p} \,dm(z_1,z_2) <\infty\]
       as well. Now we get that
       \begin{align*}
         \int_{B_{\delta}(\xi_1,\xi_2)} \rho_{\phi}(z_1,z_2)^{1-\mathfrak p} \,dm(z_1,z_2) & = \int_{B_{\delta}(\xi_1,\xi_2)} \frac{\sigma(z_1,z_2)^{1-\mathfrak p}}{|\tilde{p}_1(z_1,z_2)|^{2(1-\mathfrak p)}} \,dm \\ 
         & =  \int_{B_{\delta}(\xi_1,\xi_2)} \sigma(z_1,z_2)^{1-\mathfrak p}\, |\tilde{p}_1(z_1,z_2)|^{2(\mathfrak p-1)}  \,dm(z_1,z_2) \\
         & \leq \kappa^{\mathfrak p-1} \int_{B_{\delta}(\xi_1,\xi_2)} \sigma(z_1,z_2)^{1-\mathfrak p} \,dm(z_1,z_2) < \infty
       \end{align*}
       since $\kappa>0$ and $\mathfrak p$ cannot be less than 1. Moreover, since the global integrability is the smallest among all local integrability exponents, the inequality above says that the one of $\frac{\partial\phi}{\partial z_3}$ at $(\xi_1,\xi_2)$ will not be smaller than the global one, and will in turn not affect the global integrability.
   \end{proof}

    The above theorem, together with the analysis done in Example \ref{Example 5.2. global Lp}, yields the critical $L^{\mathfrak p}(\T^3)$ integrability index of $\frac{\partial\phi}{\partial z_3}$ for that example, namely $\mathfrak p^*=\frac{3}{2}$. This addresses Question 2 in \cite[Section 5]{bickel2022singularities}, answering the first part of the question, while completely disproving the second. Not only does the presence of a vertical line singularity fail to worsen the integrability, but locally the integrability is either improved or remains the same, and it has no effect on the global integrability.   
     
   \begin{remark*}
        The claim of the Corollary \ref{corollary 3 equivalences around the vertical line and its component} applies to RIFs with several vertical line singularities as well, since all of them are disjoint, and the argument holds for each of the lines separately. By Theorem \ref{situation better in vertical lines}, this means that none of them have an effect on the global integrability.
    \end{remark*}

    \section{Higher dimensions}{\label{section higher dimensions}}
    This section is dedicated to generalizing results of Section \ref{section on curve singularities} to higher-dimensional irreducible RIFs of polydegree $(m_1,\dots,m_{n-1},1)$ with singularities on $\T^n$. Once again, if an RIF $\phi$ is linear in the last variable, we can write
    \[\phi(z)=\frac{\tilde{p}_1(z_1,\dots,z_{n-1})\cdot z_n+ \tilde{p}_2(z_1,\dots,z_{n-1})}{p_1(z_1,\dots,z_{n-1})+ p_2(z_1,\dots,z_{n-1}) \cdot z_n}.\]
    and therefore define
    \[\rho_{\phi}(z_1,\dots,z_{n-1})=\frac{\sigma(z_1,\dots,z_{n-1})}{|\tilde{p}_1(z_1,\dots,z_{n-1})|^2},\]
    which is analytic at all points of $\T^{n-1}$ except those where $\tilde{p}_1=0$. That again happens at points contained in a vertical line, i.e. we get the immediate result:

    \begin{corollary}{\label{rho_phi analytic n variables}}
        The function $\rho_{\phi}$ is analytic on $\T^n\backslash\bigcup_{\alpha\in A }\{\xi_{\alpha}\}$, where $\xi_{\alpha}\in\T^{n-1}$ are points at which $\phi$ has vertical line singularities $\{\xi_{\alpha}\}\times\T$. Moreover, $\sigma$ is analytic on $\T^{n-1}$.
    \end{corollary}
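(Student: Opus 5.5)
The plan is to prove the analyticity of $\sigma$ first and then obtain the statement about $\rho_{\phi}$ from it. I read the domain of $\rho_{\phi}$ as $\T^{n-1}$, since $\rho_{\phi}$ is a function of $(z_1,\dots,z_{n-1})$. On $\T^{n-1}$ write $z_j=e^{i\theta_j}$. Then $\overline{z_j}=e^{-i\theta_j}$, so
\[\sigma=\tilde{p}_1\overline{\tilde{p}_1}-\tilde{p}_2\overline{\tilde{p}_2}\]
is a finite trigonometric polynomial in $(\theta_1,\dots,\theta_{n-1})$. Equivalently, it is a polynomial in the real and imaginary parts of the $z_j$. Hence it is real-analytic on all of $\T^{n-1}$, in fact on a neighbourhood of $\overline{\D^{n-1}}$. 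This is the same argument as Lemma \ref{sigma real analytic}. Non-negativity carries over verbatim from the $n$-variable version of (\ref{inequalities for p1 and p2}): stability of $p$ together with $|z_n|\le 1$ forces $|p_1|\ge|p_2|$, and $|p_j|=|\tilde{p}_j|$ on the torus.

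For $\rho_{\phi}$, I would first pin down where the denominator $|\tilde{p}_1|^2$ vanishes on $\T^{n-1}$. Take $\hat\xi\in\T^{n-1}$ with $\tilde{p}_1(\hat\xi)=0$. Since $|\tilde{p}_1|\ge|\tilde{p}_2|$ on $\T^{n-1}$, also $\tilde{p}_2(\hat\xi)=0$. Then
\[\tilde{p}(\hat\xi,z_n)=\tilde{p}_1(\hat\xi)z_n+\tilde{p}_2(\hat\xi)\]
vanishes for every $z_n\in\T$. By Proposition \ref{p and tilde p similarities}(ii), $p$ vanishes on the whole line $\{\hat\xi\}\times\T$, so this is a vertical line singularity. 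Conversely, if $\{\hat\xi\}\times\T\subset\mathcal{Z}_p$, then $p_1(\hat\xi)+p_2(\hat\xi)z_n\equiv 0$ in $z_n$. This forces $p_1(\hat\xi)=p_2(\hat\xi)=0$, and hence $\tilde{p}_1(\hat\xi)=0$ because $|\tilde{p}_1|=|p_1|$ on the torus. Thus the zero set of $\tilde{p}_1$ on $\T^{n-1}$ is exactly $\{\xi_\alpha\}_{\alpha\in A}$.

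Away from these points, $|\tilde{p}_1|^2$ is a real-analytic function with no zeros on the open set $\T^{n-1}\setminus\{\xi_\alpha\}$. Its reciprocal is therefore real-analytic there, and $\rho_{\phi}=\sigma\cdot|\tilde{p}_1|^{-2}$ is real-analytic as a product of real-analytic functions. I would also note that in the irreducible case the index set $A$ is finite, by the $n$-variable analogue of Lemma \ref{finitely VLS}. The statement does not need this, however, since $\{\xi_\alpha\}$ is in any case a closed subset of $\T^{n-1}$, being the zero set of a continuous function.

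The only delicate step is the identification of the zero set of $\tilde{p}_1$ with the vertical lines. It hinges on the boundary inequality $|\tilde{p}_1|\ge|\tilde{p}_2|$ in $n$ variables. I would justify that inequality by letting $z'\to\hat\zeta$ from inside $\D^{n-1}$ in
\[|p_1(z')|\ge|p_2(z')|, \qquad z'\in\D^{n-1}.\]
This inequality on $\D^{n-1}$ itself follows because otherwise $p_1(z')+p_2(z')z_n$ would have a root $z_n=-p_1(z')/p_2(z')$ with $|z_n|<1$, contradicting stability of $p$.
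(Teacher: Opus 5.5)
Your proof is correct and is the paper's own one-line argument written out in full: $\sigma$ is analytic exactly as in Lemma \ref{sigma real analytic}, and $\rho_{\phi}=\sigma/|\tilde{p}_1|^2$ is analytic wherever $\tilde{p}_1\neq 0$. By $|\tilde{p}_1|\ge|\tilde{p}_2|$ on $\T^{n-1}$, the zeros of $\tilde{p}_1$ are precisely the base points of vertical lines, and you are right to read $\T^n$ in the statement as $\T^{n-1}$.

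Delete your aside that $A$ is finite in the irreducible case, although your argument never uses it. For $n\ge 4$ this is false, as the paper stresses right after the corollary. For instance, if $p$ is the denominator in Example \ref{Example 5.2.}, then $p(z_1,z_2z_3,z_4)$ is the irreducible denominator of a $(2,1,1,1)$-degree RIF and vanishes on the vertical surface $\{(1,t,\bar t)\}\times\T$. Theorem \ref{agler-mccarthy-stankus2006} allows this, since it only bounds the dimension of the set of base points by $n-3$.
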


    In the Corollary above, the set $A$ containing $\alpha\in A$ need not be finite. Unlike the three variable case, general $(m_1,\dots,m_{n-1},1)$-degree RIFs can have infinitely many vertical lines. For example, if the zero set of $p$ on $\T^4$ contained a set of the form $\{(\xi,\xi^2, 1)\}\times\T$ for $\xi\in\T$, this would be called a vertical\textit{ surface} singularity (Figure \ref{vertical surface image}). Therefore, we obtain:

    \begin{corollary}
        For $n\geq 4$, $(m_1,\dots,m_{n-1},1)$-degree RIFs can have infinitely many vertical line singularities. In particular, they are arranged as a finite discrete set and/or as a continuum of vertical lines. 
    \end{corollary}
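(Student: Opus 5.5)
The plan is to identify the set of vertical line singularities with the zero set of $\tilde{p}_1$ on $\T^{n-1}$, show that this set is a compact real-analytic subset of $\T^{n-1}$, and then use the structure theory of such sets to get the dichotomy between finitely many isolated points and a continuum.

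First I would prove the identification. As at the start of Section \ref{section on curve singularities}, stability of $p$ together with $|z_n|\le 1$ gives $|p_1|\ge|p_2|$ on $\overline{\D^{n-1}}$. On $\T^{n-1}$ this becomes $|\tilde p_1|=|p_1|\ge|p_2|=|\tilde p_2|$. Hence $\{\hat\xi\}\times\T\subset\mathcal Z_p\cap\T^n$ if and only if $p_1(\hat\xi)=p_2(\hat\xi)=0$, which happens if and only if $\tilde p_1(\hat\xi)=0$. So the set $\{\xi_\alpha\}_{\alpha\in A}$ of Corollary \ref{rho_phi analytic n variables} equals $Z:=\mathcal Z_{|\tilde p_1|^2}\cap\T^{n-1}$. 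In the coordinates $z_j=e^{i\theta_j}$, $Z$ is the zero set of the real trigonometric polynomial $|\tilde p_1|^2$, so it is a compact real-analytic (indeed semi-algebraic after a Cayley transform) subset of $\T^{n-1}$.

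For the structural claim I would use the finiteness of connected components of compact real-analytic (or semialgebraic) sets, via Łojasiewicz's stratification. Under this, $Z$ is a finite union of connected components. Each component is either a single point or has positive dimension; in the latter case it contains a real-analytic arc and is uncountable. This gives exactly the statement: a finite discrete set of vertical lines, and/or finitely many continua of them.

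To show that infinitely many lines actually occur once $n\ge 4$, I would bound the dimension using Theorem \ref{agler-mccarthy-stankus2006}. Every point of $Z\times\T$ lies in $\mathcal Z_p\cap\T^n$, so $\dim Z+1\le n-2$, that is $\dim Z\le n-3$. For $n=3$ this forces $\dim Z=0$, recovering Lemma \ref{finitely VLS}. For $n\ge4$ a one-dimensional $Z$ is allowed, which produces the vertical surface singularities described above. Existence would then be settled by an explicit example, namely a four-variable RIF whose $\tilde p_1$ vanishes along a curve such as $\{(\xi,\xi^2,1)\}$, to be given in Section \ref{Examples}.

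I expect the main obstacle to be this last point: exhibiting a stable $p$, linear in $z_n$, with $p_1$ and $p_2$ both vanishing on a curve in $\T^{n-1}$ while $p$ stays irreducible and has no common factors with $\tilde p$. I would try first to take $p_1,p_2$ as products or sums built from the three-variable favourite example, and then check stability through the inequality $|p_1|\ge|p_2|$ on $\D^{n-1}$.
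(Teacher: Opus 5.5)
Your identification of the vertical lines with $Z=\mathcal Z_{|\tilde p_1|^2}\cap\T^{n-1}$ and your structural argument are correct. They take a different and stronger route than the paper. The paper argues only by compactness: an infinite subset of $\T^{n-1}$ has a limit point, so a closed discrete set of such points must be finite. That is elementary, but it only shows that infinitely many lines must accumulate somewhere. It does not rule out, say, a countable set of lines accumulating at a single point, so it does not actually justify ``continuum''.

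Your use of the real-algebraic structure of $Z$ supplies exactly that missing step: finitely many connected components, each a point or uncountable, which is the claimed dichotomy. Note that $Z$ is directly a real algebraic subset of $\mathbb{R}^{2(n-1)}$ via $z_j=x_j+iy_j$, $x_j^2+y_j^2=1$. The Cayley transform is therefore unnecessary, and it would miss the points with some $z_j=-1$. Your bound $\dim Z\le n-3$ via Theorem \ref{agler-mccarthy-stankus2006} also recovers Lemma \ref{finitely VLS} as a by-product.

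The gap is the existence half of the statement, which you leave open. You cannot defer it to Section \ref{Examples}, which only contains a three-variable RIF with two isolated vertical lines. The paper's own proof does not supply an example either: the surface $\{(\xi,\xi^2,1)\}\times\T$ there is only hypothetical. Your suggested starting point would not work as stated. The polynomial $3-z_1-z_2-z_3$ vanishes on $\T^3$ only at $(1,1,1)$, so using it as $p_1$ produces a single vertical line.

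What you need is a stable $p_1$ with positive-dimensional zero set on $\T^{n-1}$, together with a $p_2$ vanishing there and satisfying $|p_2|\le|p_1|$ on $\D^{n-1}$. A multiple of the reflection of $p_1$ gives the inequality for free. Concretely, put $P=2-z_1-z_2$, $\tilde P=2z_1z_2-z_1-z_2$, and $p=P+c\,z_3\cdots z_{n-1}z_n\tilde P$ with $0<|c|<1$.

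\begin{itemize}
\item Stability: the identity $|P|^2-|\tilde P|^2=2(1-|z_1|^2)|1-z_2|^2+2(1-|z_2|^2)|1-z_1|^2$, together with $P\neq0$ on $\D^2$, gives $|c\,z_3\cdots z_n\tilde P|<|P|$ on $\D^n$.
\item Irreducibility: $\tilde P(z_1,2-z_1)=-2(z_1-1)^2\not\equiv0$ gives $\gcd(P,\tilde P)=1$, so $p$ is irreducible.
\item Coprimality: $\tilde p=\bar cP+z_3\cdots z_n\tilde P$ is irreducible for the same reason, and it is not a multiple of $p$ since $|c|\neq1$.
\item Zero set: $p_1=P$ and $p_2=c\,z_3\cdots z_{n-1}\tilde P$ both vanish exactly on $\{(1,1)\}\times\T^{n-3}$. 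Off this set, $|p_2|=|c||p_1|<|p_1|$ on $\T^{n-1}$.
\end{itemize}

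Hence $\mathcal Z_p\cap\T^n=\{(1,1)\}\times\T^{n-2}$. Once $n\ge4$ this is a union of infinitely many vertical lines (a vertical surface for $n=4$), and it attains your bound $\dim Z=n-3$.
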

    \begin{proof}
        Vertical lines appear precisely when $|\tilde{p}_1|^2=0$. Assume $A\subset\mathcal{Z}_{|\tilde{p}_1|^2}\cap\T^{n-1}$ is a closed discrete set. This set cannot have limit points, as they are points whose all neighbourhood intersect some other points of the set. However, $\T^{n-1}$ is compact in the topology inherited from $\mathbb{R}^{n-1}$, so by \cite[Theorem 28.1]{munkres_topology}, every infinite subset of $\T^{n-1}$ has a limit point. Therefore, $A$ must be finite. 
    \end{proof}

    \vspace{-1pc}
    \begin{figure}[h]
		\centering
		\includegraphics[width=4.5cm]{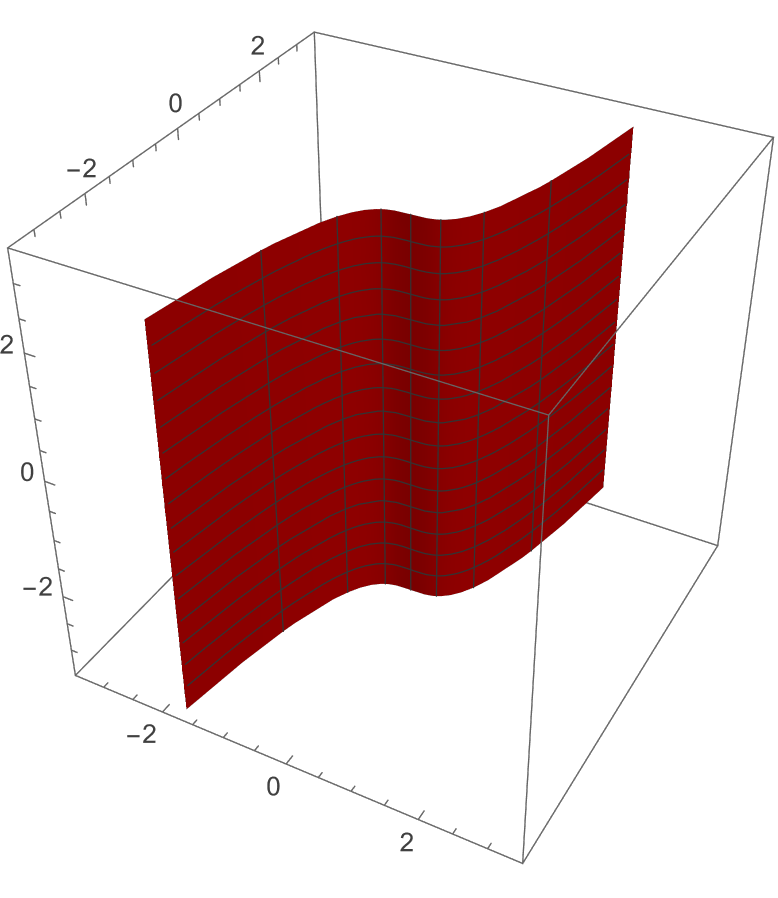}
		\caption{Vertical surface $\{(\xi,\xi^2, 1)\}\times\T$, shown in $\T^3$ by dropping the fixed third coordinate.}
        \label{vertical surface image}
	\end{figure}
    \vspace{-1pc}

    \begin{example}{\label{example 4 variable RIF}}
    We follow the matrix construction as in \cite[Example 5.1.]{bickel2022singularities}, where a three-variable RIF was created. In order to accommodate for an extra variable in a construction of a four-variable function, we pass to a $5\times5$ self-adjoint matrix $A$ and four contractions $Y_1,Y_2,Y_3$ and $Y_4$. Take the matrices as following. 
     
	\begin{align*}
		A &= \begin{pmatrix}
			1 & 0 & 0 & 0 & 0 \\
			0 & 0 & 1 & 0 & 0 \\
            0 & 1 & 0 & 0 & 1 \\
			0 & 0 & 0 & 1 & 0 \\
			0 & 0 & 1 & 0 & 1 
		\end{pmatrix},
		&
		Y_1 &= \begin{pmatrix}
			1 & & & & \\
              & 0 & & & \\
              & & 0 & & \\
              & & & 0 & \\
              & & & & 0
		\end{pmatrix},
		&
		Y_2 &= \begin{pmatrix}
			0 & & & & \\
              & 0 & & & \\
              & & 1 & & \\
              & & & 0 & \\
              & & & & 0
		\end{pmatrix},
	\end{align*}
    \begin{align*}
        Y_3 &= \begin{pmatrix}
			0 & & & & \\
              & 1 & & & \\
              & & 0 & & \\
              & & & 1 & \\
              & & & & 0
		\end{pmatrix}, 
        &
        Y_4 = I-Y_1-Y_2-Y_3= \begin{pmatrix}
			0 & & & & \\
              & 0 & & & \\
              & & 0 & & \\
              & & & 0 & \\
              & & & & 1
		\end{pmatrix}.
    \end{align*}
    Finally, we take $v=(1,1,0,0,1)^T$, after which \cite{agler2016nevanlinna} guarantees that we will obtain a Pick function on the poly-upper half-plane $\Pi^3$:
	\begin{align*}
		f(z) & = \;\langle (A-z_1 Y_1-z_2Y_2-z_3Y_3)^{-1}v,v\rangle, \; \; z\in\Pi^3 \\
		&=\frac{1+z_2-z_1 z_2-z_3-2z_2 z_3+z_1 z_2 z_3-z_4-z_2 z_4+z_1 z_2 z_4 +z_2 z_3 z_4}{(1-z_1)(1-z_3-z_2 z_3-z_4+z_2 z_3 z_4)}.
	\end{align*}
	By taking the composition
	\begin{align*}
		\phi(z_1,z_2,z_3)=(\beta\circ f \circ m) (z_1,z_2,z_3)
	\end{align*}
    for
    \vspace{-0.5pc}
    \[m:\D^3 \to \Pi^3, \quad (z_1,z_2,z_2)\mapsto \bigg(i \,\frac{1-z_1}{1+z_1},i\, \frac{1-z_2}{1+z_2},i \,\frac{1-z_3}{1+z_3}\bigg)\]
    and
    \[\beta:\Pi\to\D, \quad z\mapsto \frac{1+iz}{1-iz},\]
	we obtain a four-variable $(1,1,1,1)$-degree irreducible rational inner function $\phi$ with
	\begin{align*}
	    p_1(z_1,z_2,z_3) &=
         -3i + (2-5i)z_1 -(4-i)z_2 -(2+5i)z_1z_2 +(2+i)z_3 +3iz_1z_3 \\
         &\qquad -(2+3i)z_2z_3 +(4-5i)z_1z_2z_3 \\
        p_2(z_1,z_2,z_3) &= 2-i +(4+i)z_1 -(2+i)z_2 -3iz_1z_2 +3iz_3 -(2-i)z_1z_3\\
        &\qquad+(4-5i)z_2z_3 +(10+5i)z_1z_2z_3) \\
    \tilde{p}_1(z_1,z_2,z_3)&=
    -4-5i +(2-3i)z_1 +3iz_2 -(2-i)z_1z_2 +(2-5i)z_3+(4+i)z_1z_3 \\
    &\qquad-(2+5i)z_2z_3 -3iz_1z_2z_3 \\
    \tilde{p}_2(z_1,z_2,z_3) &=-10+5i -(4+5i)z_1 +(2+i)z_2 +3iz_1z_2 -3iz_3 +(2-i)z_1z_3 \\
    &\qquad -(4-i)z_2z_3-(2+i)z_1z_2z_3.
	\end{align*}

    \noindent
     Direct calculation shows $\mathcal{Z}_{\tilde{p}_1}\cap\T^3=\{(i,-1,1)\}$. Therefore, $\phi$ has only one isolated vertical line singularity on $\T^4$. The complete singular locus is, however, much bigger. In particular, 
    \begin{align*}
        \mathcal{Z}_p\cap\T^4&=\bigg\{\bigg(i,z_2,z_3,\frac{(-5+i)-(1-i)z_2+(1-i)z_3-(3+i)z_2 z_3}{(1+3i)+(1-i)z_2-(1-i)z_3-(1-5i)z_2 z_3}\bigg)\; : \; z_2,z_3\in\T\bigg\}\\
        &\cup \bigg\{\bigg(z_1,1,z_3,\frac{(4+2i)+10iz_1+2iz_3-(4-2i)z_1 z_3}{-2i+(4-2i)z_1+(4-2i)z_3+(8+6i)z_1 z_3}\bigg)\; : \; z_1,z_3\in\T\bigg\} \\
        &\cup \{(z_1,z_2,1,i)\; : \; z_1,z_2\in\T\}
    \end{align*}
    The sets are shown in $\T^3$ in Figure \ref{figure:4 variable zero set}, with the first, second and third variable fixed, respectively. The vertical line $\{(i,-1,1)\}\times\T$ is contained in the first set.

    \begin{figure}[h!]
    \centering
    \begin{subfigure}[t]{0.32\textwidth}
        \centering
        \includegraphics[width=\textwidth]{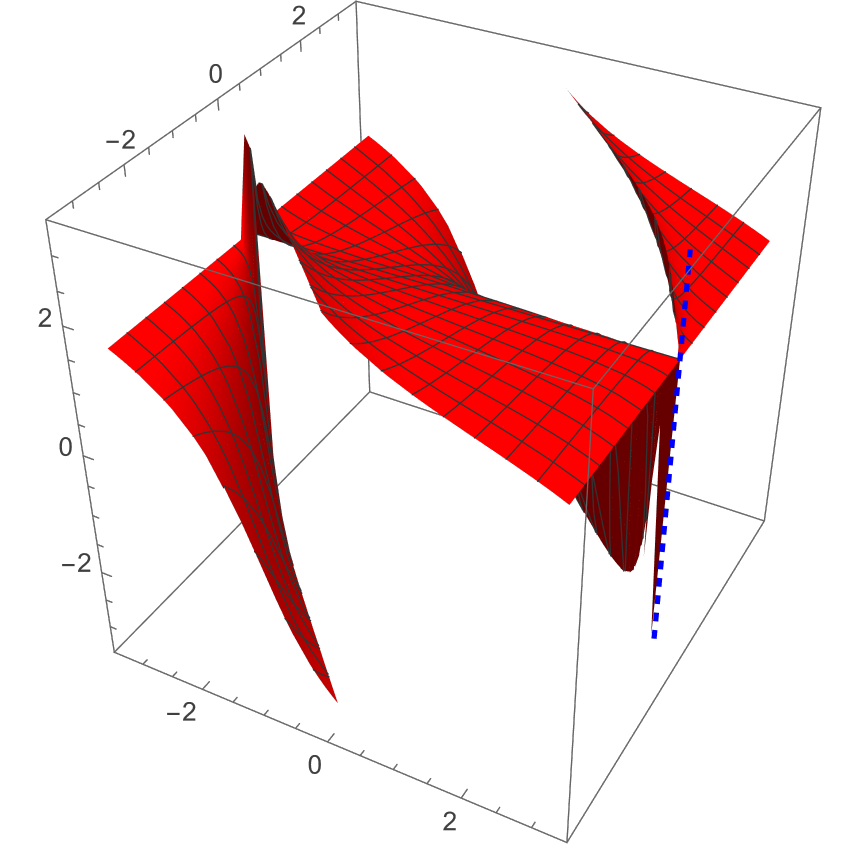}
    \end{subfigure}
    \hfill
    \begin{subfigure}[t]{0.32\textwidth}
        \centering
        \includegraphics[width=\textwidth]{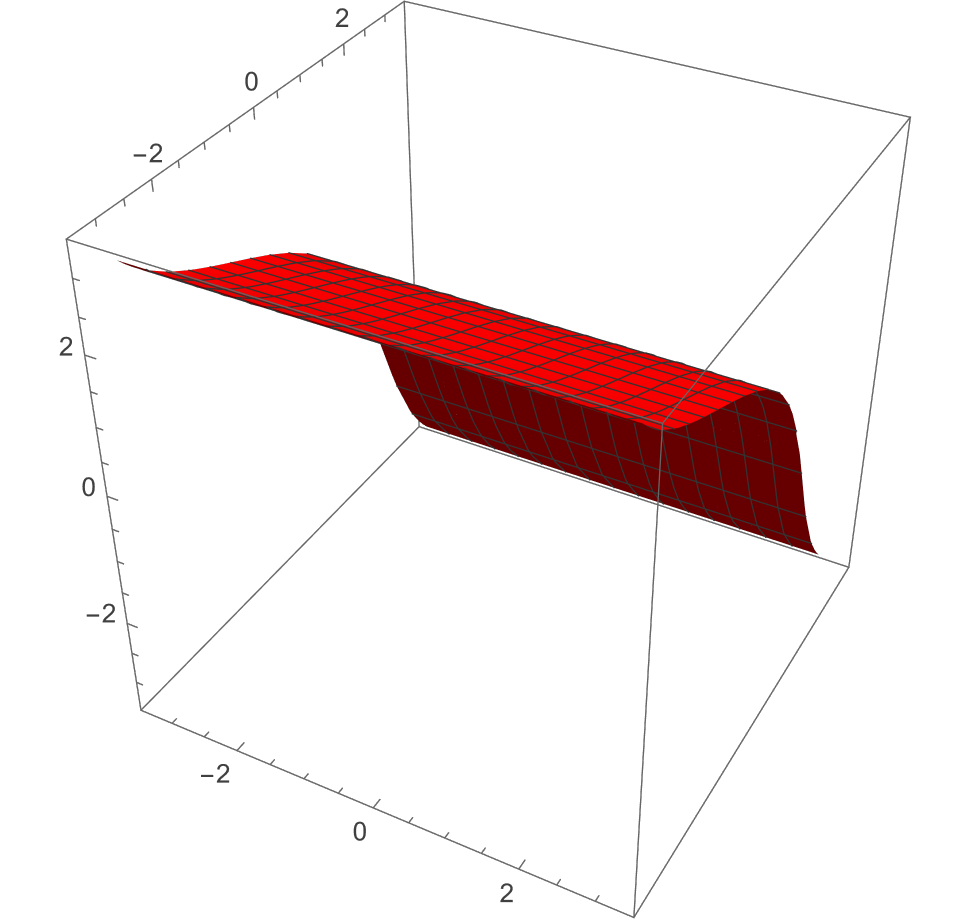}
    \end{subfigure}
    \hfill
    \begin{subfigure}[t]{0.33\textwidth}
        \centering
        \includegraphics[width=\textwidth]{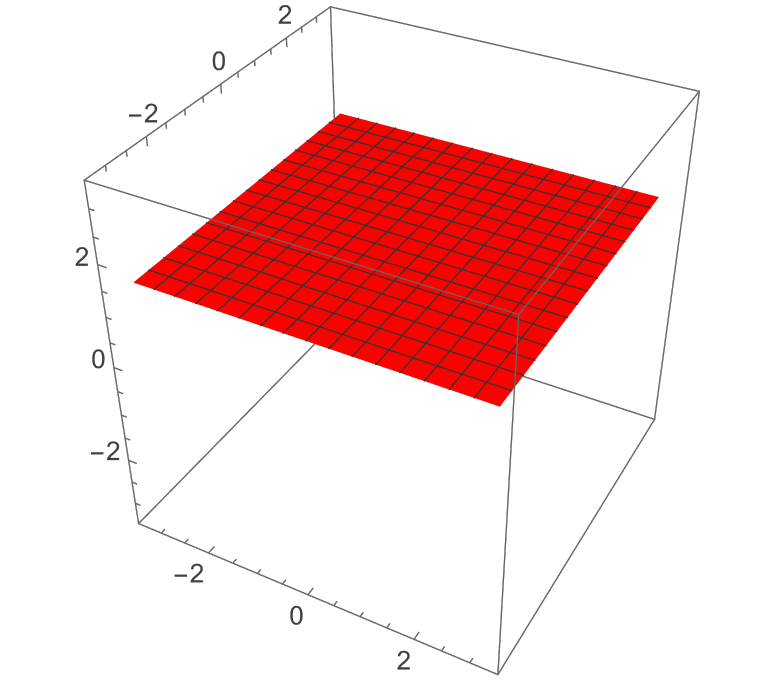}
    \end{subfigure}
    \caption{$\mathcal{Z}_{\phi}\cap\T^4$ from Example \ref{example 4 variable RIF} (pictured after dropping the constant coordinates $z_1,z_2$ and $z_3$, respectively)}
    \label{figure:4 variable zero set}
    \end{figure}
    \end{example}
    \vspace{-1pc}
    
    The complexity of the previous example motivates the extension of previous three-variable results to higher dimensions. Fortunately, this extension is quite straightforward. First, we obtain a generalization of Lemma \ref{Z_p and Z_rho_phi (almost) the same} to $n\geq 3$ variables.
    
    \begin{lemma}{\label{lemma n variables Z phi and Z rho the same}}
        Let $\phi$ be an irreducible $(m_1,\dots,m_{n-1},1)$-degree RIF. Then the following holds. 
        \begin{enumerate}
        \item[(i)] If $\phi$ has no vertical line singularities, $\rho_{\phi}(\zeta_1,\dots,\zeta_{n-1})=0$ if and only if $(\zeta_1,\dots,\zeta_{n-1},\tau)$ is a singularity of $\phi$ for some $\tau\in\T$.
        \item[(ii)] If $\mathcal{Z}_{p}\cap\T^n\supset \bigcup_{\alpha\in A }\{\xi^{\alpha}\}\times\T$, for $\alpha\in A$ from some collection $A$, and $\xi^{\alpha}\in\T^{n-1}$, then $\rho_{\phi}(\zeta_1,\dots,\zeta_{n-1})=0$ if and only if $(\zeta_1,\dots,\zeta_{n-1},\tau)\in\mathcal{Z}_p\cap\T^n\,\backslash\big( \{\xi^{\alpha}\}\times\T\big)$.
        \end{enumerate}  
    \end{lemma}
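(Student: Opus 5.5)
The plan is to repeat the proof of Lemma \ref{Z_p and Z_rho_phi (almost) the same} in $n-1$ base variables. Nothing there used that the base was two-dimensional, except the finiteness of vertical lines, which part (ii) no longer needs. Throughout, write $\hat\zeta=(\zeta_1,\dots,\zeta_{n-1})$ and $\sigma=|\tilde p_1|^2-|\tilde p_2|^2$.

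First I will record the $n$-variable version of (\ref{inequalities for p1 and p2}). Stability of $p=p_1+p_2z_n$ on $\D^n$, with $z_n$ ranging over $\D$, forces $|p_1|\ge|p_2|$ on $\D^{n-1}$. By continuity this holds on $\T^{n-1}$, and since $|p_j|=|\tilde p_j|$ there, we get $|\tilde p_1|\ge|\tilde p_2|$ on $\T^{n-1}$. Consequently:
\begin{itemize}
\item[(a)] $\tilde p_1(\hat\zeta)=0$ forces $\tilde p_2(\hat\zeta)=0$, so $\{\hat\zeta\}\times\T\subset\mathcal Z_{\tilde p}\cap\T^n=\mathcal Z_p\cap\T^n$.
\item[(b)] Conversely, a vertical line $\{\hat\zeta\}\times\T$ in $\mathcal Z_p$ forces $p_1(\hat\zeta)=p_2(\hat\zeta)=0$, hence $\tilde p_1(\hat\zeta)=0$.
\end{itemize}
So the vertical-line base points are exactly $\mathcal Z_{\tilde p_1}\cap\T^{n-1}$. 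By Corollary \ref{rho_phi analytic n variables}, $\rho_\phi=\sigma/|\tilde p_1|^2$ is defined and analytic off these points, and $\mathcal Z_{\rho_\phi}=\mathcal Z_\sigma\setminus\{\xi^\alpha\}$.

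For (i), the absence of vertical lines means $\tilde p_1$ has no zeros on $\T^{n-1}$, so $\mathcal Z_{\rho_\phi}=\mathcal Z_\sigma$.
\begin{itemize}
\item Forward direction: if $\tilde p(\hat\zeta,\tau)=0$ with $\tau\in\T$, then $\tau=-\tilde p_2(\hat\zeta)/\tilde p_1(\hat\zeta)$. Taking moduli gives $\sigma(\hat\zeta)=0$, so $\rho_\phi(\hat\zeta)=0$.
\item Reverse direction: if $\sigma(\hat\zeta)=0$ with $\tilde p_1(\hat\zeta)\ne0$, then $\tau:=-\tilde p_2(\hat\zeta)/\tilde p_1(\hat\zeta)$ lies in $\T$ and $\tilde p(\hat\zeta,\tau)=0$. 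By Proposition \ref{p and tilde p similarities}(ii), $(\hat\zeta,\tau)\in\mathcal Z_p\cap\T^n$.
\end{itemize}
This is a direct construction, so I will not need to reroute through the Clark-measure argument quoted after (\ref{equation in beginning, phi equiv with rho_phi}).

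For (ii), I will split $\mathcal Z_\sigma\cap\T^{n-1}$ into two parts. The first is the points with $\tilde p_1=0$; by (a) and (b) these are exactly the $\xi^\alpha$, and at them $\rho_\phi$ is undefined and so not a zero. The second is the points with $\tilde p_1\neq0$, where the argument of (i) applies verbatim.

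The one step that needs care is reading the statement correctly. The excluded set should be $\bigcup_\alpha\{\xi^\alpha\}\times\T$. Also, a point $(\hat\zeta,\tau)\in\mathcal Z_p$ with $\hat\zeta\neq\xi^\alpha$ for every $\alpha$ has $\tilde p_1(\hat\zeta)\ne0$ by (b), so the forward computation in (i) applies to it. Hence $\rho_\phi(\hat\zeta)=0$ if and only if there is a $\tau$ with $(\hat\zeta,\tau)\in\mathcal Z_p\cap\T^n\setminus\bigcup_\alpha(\{\xi^\alpha\}\times\T)$. Any such $\tau$ is then unique.

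The set $A$ may be infinite, for instance along a vertical surface, but this causes no difficulty: the argument is pointwise and uses no finiteness or discreteness.
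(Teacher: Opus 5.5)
Your proof is correct. Its skeleton matches the paper's, which simply says to rerun the proof of Lemma~\ref{Z_p and Z_rho_phi (almost) the same}: your (a)/(b) analysis of where $\tilde p_1$ vanishes, and the modulus computation for the forward direction of (i), are exactly that proof in $n-1$ base variables.

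The genuine difference is the reverse implication of (i). The paper does not produce $\tau$ by hand. It defers to the discussion after (\ref{equation in beginning, phi equiv with rho_phi}), i.e.\ to \cite[Lemma 2]{sola_2023}. There, when $\sigma(\hat\zeta)=0$, the slice $\phi_{\hat\zeta}$ is a unimodular constant $\alpha$, so $\{\hat\zeta\}\times\T$ lies in the level set $C_\alpha$. The Clark-measure results of \cite{clarkmeasuresrationalinner2021}, stated for two variables and extended by the parenthetical remark there, then yield a singularity lying over $\hat\zeta$.

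You instead note that $\sigma(\hat\zeta)=0$ with $\tilde p_1(\hat\zeta)\neq0$ makes $\tau=-\tilde p_2(\hat\zeta)/\tilde p_1(\hat\zeta)$ unimodular with $\tilde p(\hat\zeta,\tau)=0$, and you finish with Proposition~\ref{p and tilde p similarities}(ii). This is purely algebraic, valid verbatim for every $n$, and gives uniqueness of $\tau$ at no cost. It also removes the reliance on extending a two-variable Clark-measure theorem. The paper's route only adds the link to the level-set picture, which this lemma does not need.

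Your pointwise handling of (ii), splitting $\mathcal Z_\sigma$ according to whether $\tilde p_1$ vanishes, is also tighter than the paper's. The paper appeals to (i) in a setting where the no-vertical-line hypothesis of (i) fails.

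Two small corrections. First, the step ``$\hat\zeta\neq\xi^\alpha$ for all $\alpha$ implies $\tilde p_1(\hat\zeta)\neq0$'' is the contrapositive of (a), not (b). Second, that step needs $A$ to index \emph{all} vertical line singularities. Under the literal hypothesis $\mathcal Z_p\cap\T^n\supset\bigcup_{\alpha}\{\xi^\alpha\}\times\T$, a vertical line left out of $A$ would violate (ii): the right-hand side holds over its base point while $\rho_\phi$ is undefined there. You adopt this intended reading tacitly; state it explicitly.
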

    \begin{proof}
        The proof follows the same steps as the one of Lemma \ref{Z_p and Z_rho_phi (almost) the same}, since by Corollary \ref{rho_phi analytic n variables}, the function $\sigma$ is analytic at all points except those possibly contained in vertical line singularities of $\phi$. 
    \end{proof}
    
    \begin{example}{\label{example 4 variable rho_phi}}
    Going back to Example \ref{example 4 variable RIF}, we recover the zero set of $\sigma$ (Figure \ref{figure: 4 variable Z_rho_phi}),
    \[\mathcal{Z}_\sigma\cap\T^3=\{(i,z_2,z_3)\,:\, z_2,z_3\in\T\} \cup \{(z_1,1,z_3)\,:\, z_1,z_3\in\T\} \cup \{(z_1,z_2,1)\,:\, z_1,z_2\in\T\},\]
    which is exactly equal to $\mathcal{Z}_{\rho_{\phi}}\cap\T^3\backslash\{(i,-1,1)\}$.

     \begin{figure}[h]
		\centering
		\includegraphics[width=4.5cm]{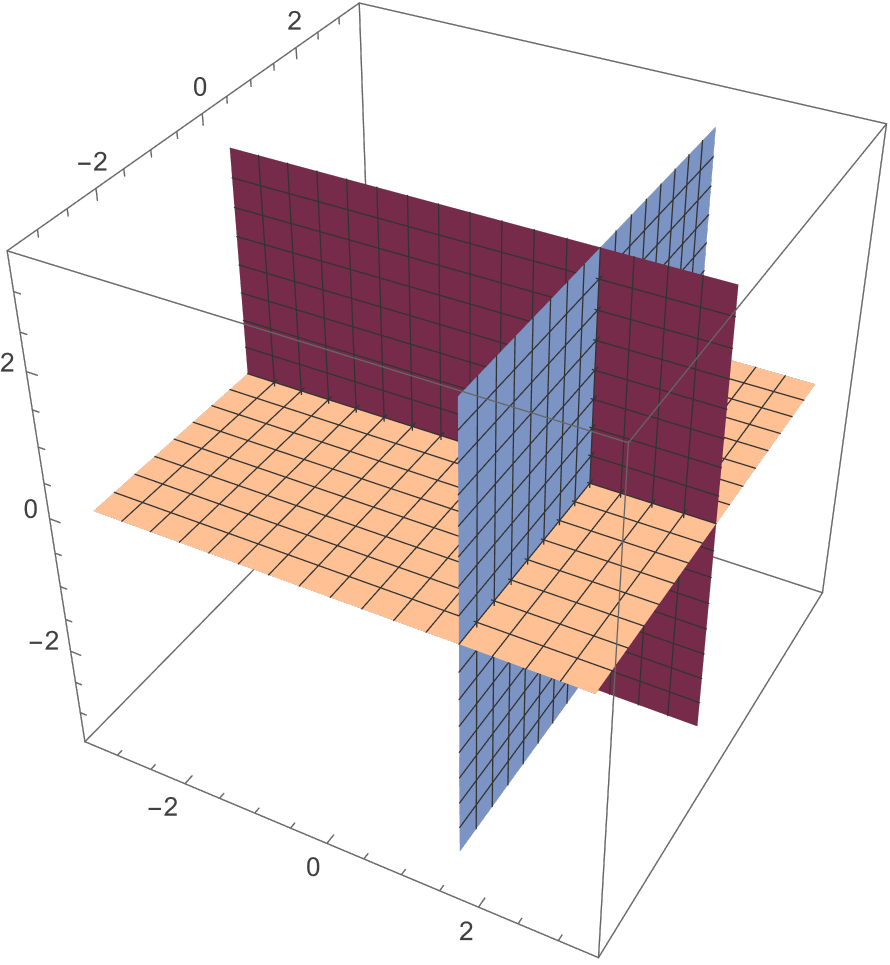}
		\caption{The three sets making up $\mathcal{Z}_\sigma\cap\T^3$}
        \label{figure: 4 variable Z_rho_phi}
	\end{figure}
    \end{example}
    \vspace{-1pc}
    
  In light of this example and previous research, we are naturally led to the question of integrability in higher dimensions. Consequently, we obtain an analogue of Theorem \ref{theorem_rho=p1-p2} and Lemma \ref{lemma_rho}.

    \begin{theorem}{\label{theorem Lp with rho and sigma n variables}}
        Let $\phi$ be a $(m_1,..,m_{n-1},1)$-degree RIF with possible (not necessarily isolated) vertical line singularities $\bigcup_{\alpha\in A }\{\xi^{\alpha}\}\times\T$, for some collection $A\ni \alpha$. Then at all points $\zeta\in\mathcal{Z}_p\cap\T^n$ not contained in $\bigcup_{\alpha\in A }\{\xi^{\alpha}\}\times\T$,
        \begin{align*}
        \frac{\partial\phi}{\partial z_n}\in L^{\mathfrak p}_{loc}(\T^n) &\iff \int_{B_{\varepsilon}(\zeta_1,\dots,\zeta_{n-1})} \rho_{\phi}(z_1,\dots,z_{n-1})^{1-\mathfrak p} \,dm(z_1,\dots,z_{n-1}) <\infty \\
        &\iff \int_{B_{\varepsilon}(\zeta_1,\dots,\zeta_{n-1})} \sigma(z_1,\dots,z_{n-1})^{1-\mathfrak p} \,dm(z_1,\dots,z_{n-1}) <\infty
        \end{align*}
		for sufficiently small $\varepsilon>0$. In particular, if $\phi$ contains no vertical line singularities, the above equivalences hold for all $\zeta\in\mathcal{Z}_p\cap\T^n$.
    \end{theorem}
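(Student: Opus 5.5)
The plan mirrors the three-variable argument of Theorem \ref{theorem_rho=p1-p2} and Lemma \ref{lemma_rho}. There are two equivalences to prove.

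\textbf{First equivalence.} We localize Theorem \ref{thm equiv Lp and integral}, which itself comes from \cite[Theorem 2.1]{bickel2022singularities} and \cite[Lemma 2]{sola_2023}. Fix $\zeta\in\mathcal{Z}_p\cap\T^n$ with $\hat\zeta=(\zeta_1,\dots,\zeta_{n-1})\notin\{\xi^\alpha\}_{\alpha\in A}$.
\begin{itemize}
\item Since $\hat\zeta$ is not the base point of a vertical line, $\tilde p_1(\hat\zeta)\neq 0$. This holds because $|\tilde p_1|\ge|\tilde p_2|$ on $\T^{n-1}$, by the same argument as (\ref{inequalities for p1 and p2}). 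So a vanishing $\tilde p_1$ at $\hat\zeta$ would force the whole line $\{\hat\zeta\}\times\T$ into $\mathcal{Z}_p$.
\item By continuity, choose $\varepsilon>0$ so that $c\le|\tilde p_1|^2\le C$ on $\overline{B_\varepsilon(\hat\zeta)}$, with constants $0<c\le C$.
\item On this ball $\psi^0=-\tilde p_2/\tilde p_1$ is a well-defined rational function, and $\phi(\hat z,\cdot)$ is a Möbius map in $z_n$ for every $\hat z$ in the ball.
\item The computation behind Theorem \ref{thm equiv Lp and integral} is fiberwise. It integrates $|\partial_{z_n}\phi(\hat z,z_n)|^{\mathfrak p}$ over $z_n\in\T$ for fixed $\hat z$, which gives a quantity comparable to $(1-|\psi^0(\hat z)|^2)^{1-\mathfrak p}$, and then integrates over $\hat z$.
\item Restricting the $\hat z$-integration to $B_\varepsilon(\hat\zeta)$ therefore yields: $\partial\phi/\partial z_n\in L^{\mathfrak p}(B_\varepsilon(\hat\zeta)\times\T)$ if and only if $\int_{B_\varepsilon}\rho_\phi^{1-\mathfrak p}\,dm<\infty$.
\end{itemize}

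Local integrability near $\zeta$ is understood here as integrability on such a tube $B_\varepsilon(\hat\zeta)\times\T$. This is consistent with the paper's convention of working with neighbourhoods of points of $\mathcal{Z}_{\rho_\phi}$ in $\T^{n-1}$. The only additional point to check is that the fiberwise comparability constants are uniform on the ball. They are, because $\tilde p_1$ is bounded away from zero there and $|\psi^0|\le1$.

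\textbf{Second equivalence.} On $B_\varepsilon(\hat\zeta)$ we have $\rho_\phi=\sigma/|\tilde p_1|^2$ with $c\le|\tilde p_1|^2\le C$. Hence
\[
C^{\mathfrak p-1}\sigma^{1-\mathfrak p}\le\rho_\phi^{1-\mathfrak p}\le c^{\mathfrak p-1}\sigma^{1-\mathfrak p},
\]
which uses $\mathfrak p\ge1$, and the two integrals converge or diverge together. Lemma \ref{lemma n variables Z phi and Z rho the same}(ii) guarantees that $\hat\zeta$ is indeed a zero of $\rho_\phi$, so the statement is not vacuous. Near $\hat\zeta$, the zero sets of $\sigma$ and $\rho_\phi$ coincide, since $|\tilde p_1|^2$ is nonvanishing there.

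The final sentence of the theorem follows at once: if there are no vertical lines, the excluded set is empty, and Lemma \ref{lemma n variables Z phi and Z rho the same}(i) applies at every point of $\mathcal{Z}_p\cap\T^n$.

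The main obstacle is the first step: justifying that the global equivalence of Theorem \ref{thm equiv Lp and integral} localizes, with uniform constants, to tubes over balls avoiding the set where $\tilde p_1=0$. When $A$ is infinite, as for vertical surfaces, that set may be a continuum. This does not affect the argument, because we only need a small ball around the single point $\hat\zeta$ at which $\tilde p_1\ne0$.
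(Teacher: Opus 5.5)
Your proposal is correct and is essentially the paper's argument. The paper's proof simply says to repeat the proofs of Theorem~\ref{theorem_rho=p1-p2} and Lemma~\ref{lemma_rho} with Lemma~\ref{lemma n variables Z phi and Z rho the same}, i.e.\ localize the fiberwise result of \cite[Theorem 2.1]{bickel2022singularities} and compare $\rho_\phi$ with $\sigma$ using $\tilde p_1(\hat\zeta)\neq 0$ off vertical lines, which is what you do, with more detail (the two-sided bounds on $|\tilde p_1|^2$ and the tube interpretation of local integrability); one small correction: the uniformity of the fiberwise constants does not come from $\tilde p_1$ being bounded away from zero, but from the M\"obius structure itself, since $\int_{\T}\bigl|\tfrac{d}{dz}\tfrac{z-a}{1-\bar a z}\bigr|^{\mathfrak p}\,|dz|\approx(1-|a|^2)^{1-\mathfrak p}$ with constants depending only on $\mathfrak p$, uniformly in $a\in\D$.
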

    \begin{proof}
       The  equivalences follow by using Lemma \ref{lemma n variables Z phi and Z rho the same} and repeating the steps of Theorem \ref{theorem_rho=p1-p2}, when there is no vertical lines, and afterwards, of Lemma \ref{lemma_rho}, when there is.
    \end{proof}

    And accordingly, we once again make use of Theorem \ref{Demailly} to obtain the following result, the generalization of Theorem \ref{all pts on component same Lp}.

    \begin{theorem}{\label{situation better in VL n variable}}
       Let $(\zeta_1,\dots,\zeta_{n-1})\in\T^{n-1}$ be a point in the zero set of $\rho_{\phi}$ around which $\frac{\partial \phi}{\partial z_n}\in L^{\mathfrak p}_{loc}(\T^n)$. Assume $\mathcal{Z}_{\sigma}\cap\T^{n-1}$ is of pure real dimension $n-2$ so that $\mathcal{Z}_{\rho_{\phi}}\cap\T^{n-1}=\bigcup_{i=1} ^N M_i$ for some $N\in\mathbb{N}$, where each $M_i$ is a real manifold of dimension $n-2$, and let $(\zeta_1,\dots,\zeta_{n-1})\in M_j\backslash \bigcup_{k\neq j} M_k$ for some $j\in \{1,\dots,N\}$. Then $\frac{\partial\phi}{\partial z_n}\in L^{\mathfrak p}_{loc}(\T^n)$ at every $(z_1,\dots,z_{n-1})\in M_j\backslash \bigcup_{k\neq j} M_k$.
    \end{theorem}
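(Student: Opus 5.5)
The plan is to mimic the proof of Theorem \ref{all pts on component same Lp}, now in $n-1$ real variables. First, Theorem \ref{theorem Lp with rho and sigma n variables} lets us move between the three conditions at $(\zeta_1,\dots,\zeta_{n-1})$:
\begin{itemize}
\item[(a)] $\frac{\partial \phi}{\partial z_n}\in L^{\mathfrak p}_{loc}$ there;
\item[(b)] the local integrability of $\rho_\phi^{1-\mathfrak p}$;
\item[(c)] the local integrability of $\sigma^{1-\mathfrak p}$.
\end{itemize}
This is legitimate as long as the point does not lie over a vertical line. Since $M_j$ is a smooth manifold contained in $\mathcal{Z}_{\rho_\phi}$, and $\rho_\phi$ is analytic off the vertical-line base points by Corollary \ref{rho_phi analytic n variables}, we argue as for $\gamma_k$ in Theorem \ref{all pts on component same Lp} that $\tilde p_1\neq 0$ on $M_j\setminus\bigcup_{k\ne j}M_k$. (If needed, we simply exclude the closed set $\mathcal{Z}_{\tilde p_1}$, which lies in $\mathcal{Z}_\sigma$ and does not belong to $\mathcal{Z}_{\rho_\phi}$.) Hence at every point of $M_j\setminus\bigcup_{k\neq j}M_k$ it suffices to prove finiteness of $\int_{B_\varepsilon}\sigma^{1-\mathfrak p}\,dm$.

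Next, write $\sigma(e^{i\theta_1},\dots,e^{i\theta_{n-1}})$ as a real-analytic function on $U=(-\pi,\pi)^{n-1}$, or on a translate of $U$ to cover the whole torus. Complexify it, using \cite[Proposition 3.1.3]{tastybits}, to a holomorphic $F$ on a domain $V\subset\C^{n-1}$ with $F|_U=\sigma$. Near $M_j$, the complex zero set $\mathcal{Z}_F$ has a local component $B_j$ containing $M_j$ as a totally real submanifold of maximal dimension $n-2$. Such a submanifold is a uniqueness set in $B_j$, so $B_j$ is the complexification of $M_j$ and is a hypersurface. Apply Theorem \ref{Demailly} to the germ of $\mathcal{Z}_F$, which is of pure dimension $n-2$ since the zero sets of holomorphic functions are hypersurfaces. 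This gives $F=v f_1^{l_1}\cdots f_N^{l_N}$, with $l_j$ the vanishing order of $F$ at every regular point of $B_j$ off the other components. Restricting back to real points, the vanishing order of $\sigma$ is the same integer $l_j$ at every point of $M_j\setminus\bigcup_{k\ne j}M_k$.

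Finally, near any such point $w$, straighten $M_j$ by a real-analytic change of coordinates to $\{x_1=0\}$, whose Jacobian is bounded above and below and so does not affect integrability. Because $M_j$ is smooth at $w$ and $w$ lies on no other component, $\sigma=x_1^{l_j}\,u(x)$ there, with $u$ real-analytic and nonvanishing near $w$. By non-negativity of $\sigma$ (Lemma \ref{sigma real analytic}), $l_j$ is even and $u>0$. Hence $\int_{B_\varepsilon(w)}\sigma^{1-\mathfrak p}\,dm<\infty$ if and only if $l_j(1-\mathfrak p)>-1$. This criterion does not depend on $w$. The hypothesis at $(\zeta_1,\dots,\zeta_{n-1})$ gives it, so it holds everywhere on $M_j\setminus\bigcup_{k\ne j}M_k$, and Theorem \ref{theorem Lp with rho and sigma n variables} converts this back into $\frac{\partial \phi}{\partial z_n}\in L^{\mathfrak p}_{loc}(\T^n)$.

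The main obstacle is the middle step: identifying the real components $M_j$ with irreducible components of the complex germ, and showing that the real vanishing order equals $l_j$ and is constant. Theorem \ref{Demailly} is local at one germ. To make it global along $M_j$, I would use connectedness of $M_j\setminus\bigcup_{k\ne j}M_k$, or of each of its components, together with a chain of overlapping germs. On overlaps the factorizations agree, so the order of $f_j$ cannot jump. The claim that $F$ factors with $f_j$ vanishing exactly to first order on $B_j$ comes from irreducibility and smoothness of $B_j$ at real regular points, where $f_j$ can be taken as a local defining function with nonzero differential.
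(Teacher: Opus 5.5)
\textbf{Gap: real pieces $M_k$ versus complex components.} Your outline follows the paper's route: pass to $\sigma$, complexify, apply Theorem \ref{Demailly}, straighten $M_j$. The step carrying all the weight does not hold. You identify the real pieces $M_i$ with the irreducible components of the complex zero germ of $F$. From this you conclude that at every $w\in M_j\setminus\bigcup_{k\neq j}M_k$ one has $\sigma=x_1^{l_j}u$ with $u(w)\neq 0$.

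Theorem \ref{Demailly} gives constancy of the order only off \emph{all other complex components}. Because $F$ is real on real points, its zero germ at a real point $w$ can contain, besides the complexification of $M_j$, pairs of conjugate branches. These branches may meet $\T^{n-1}$ only in a lower-dimensional set, e.g.\ only at $w$. They do not appear among the $M_k$, so $w\in M_j\setminus\bigcup_{k\ne j}M_k$, and yet $u(w)=0$.

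The model case is $\sigma=x_1^2(x_1^2+x_2^4)$. Its real zero set is the smooth line $\{x_1=0\}$, but $F$ also vanishes on $x_1=\pm i x_2^2$. Substituting $x_1=x_2^2u$ shows that $\int\sigma^{1-\mathfrak p}$ is locally finite at $(0,a)$, $a\neq 0$, iff $\mathfrak p<\frac{3}{2}$, but at the origin only iff $\mathfrak p<\frac{11}{8}$. So smoothness of $M_j$ at $w$ and absence of other real components give no control over $u(w)$. The paper's proof of Theorem \ref{all pts on component same Lp} makes the same identification ($\gamma_i\subset W_i\subset B_i$, and $\sigma=z_1^{l_j}(\mathrm{const}+\dots)$), so mirroring it does not close the gap.

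\textbf{This happens for actual RIFs.} The statement needs an extra hypothesis, for instance that $w$ lie on no other complex component of the complexified germ. Without it, constancy holds only off the lower-dimensional real traces of those components. Composition in the last variable multiplies slice matrices, as in the proof of Lemma \ref{zero set of phi and phi^N the same}, so $\sigma$ is multiplicative.
\begin{itemize}
\item Take $\phi_a=\frac{2z_1z_2z_3-z_1-z_2}{2-(z_1+z_2)z_3}$. Then $\sigma_a=|z_1-z_2|^2$ on $\T^2$.
\item Let $\psi$ have denominator $2-(z_1+z_2)G(z_1z_2)z_3$. Here $G$ is a polynomial with $|G(\zeta)|^2=1-c|1-\zeta|^4$ on $\T$, obtained from Fej\'er--Riesz with $0<c<\frac{1}{16}$, and normalized so that $G(1)=1$. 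Then $\sigma_\psi=|z_1-z_2|^2+c|z_1+z_2|^2|1-z_1z_2|^4$.
\item For $\Phi(z)=\phi_a(z_1,z_2,\psi(z))$ one gets $\sigma_\Phi=\sigma_a\sigma_\psi$. This vanishes on $\T^2$ exactly on the diagonal, which is a single smooth curve.
\item The $\tilde p_1$ of $\Phi$ does not vanish on the diagonal, since $|G(t^2)|<1$ for $t^2\neq 1$ and $G(1)=1$. Hence there are no vertical lines.
\item Near $(1,1)$, with $x_1=\theta_1-\theta_2$ and $x_2=\theta_1+\theta_2$, one has $\sigma_\Phi\asymp x_1^2(x_1^2+x_2^4)$.
\end{itemize}
So $\frac{\partial\Phi}{\partial z_3}$ is locally in $L^{\mathfrak p}$ over $(i,i)$ for every $\mathfrak p<\frac{3}{2}$, but over $(1,1)$ only for $\mathfrak p<\frac{11}{8}$.

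\textbf{Secondary issue.} Chaining germs gives constancy only on each connected component of $M_j\setminus\bigcup_{k\neq j}M_k$. To pass through crossing points you would need the connectedness of the regular locus of the global complex component, not of the real set.
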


    It is now clear that the integrability results established in the previous section have all translated nicely to higher dimensions as well. To finalize the analysis, we state the $n$-dimensional extension of Theorem \ref{situation better in vertical lines}.

    \begin{theorem}
        Let $\phi$ be an RIF with a vertical line singularity  $\{\xi=(\xi_1,\dots,\xi_{n-1})\}\times\T\in \mathcal{Z}_p\cap\T^n$ and the zero set of $\sigma$ and $\rho_{\phi}$ as in Theorem \ref{situation better in VL n variable}. If 
        \vspace{-1pc}
       \[\int_{B_{\varepsilon}(\xi)} \sigma(z_1,\dots,z_{n-1})^{1-\mathfrak p} \,dm(z_1,\dots,z_{n-1}) <\infty\]
        for small $\varepsilon>0$, then 
        \vspace{-1pc}
       \[\int_{B_{\delta}(\xi)} \rho_{\phi}(z_1,\dots,z_{n-1})^{1-\mathfrak p} \,dm(z_1,\dots,z_{n-1}) <\infty\]
       for some $0<\delta\leq \varepsilon$.\\
       Specifically, this means that the local $L^{\mathfrak p}_{loc}(\T^n)$-integrability of $\frac{\partial\phi}{\partial z_n}$ at $(\xi,\tau), \tau\in\T$ cannot make the global $L^{\mathfrak p}(\T^n)$-integrability of $\frac{\partial\phi}{\partial z_n}$ worse.
    \end{theorem}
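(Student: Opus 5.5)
The plan is to copy the proof of Theorem \ref{situation better in vertical lines}, now in $n-1$ real variables. The core identity on $\T^{n-1}$, away from the zero set of $\tilde{p}_1$, is
\[\rho_{\phi}^{1-\mathfrak p}=\sigma^{1-\mathfrak p}\,|\tilde{p}_1|^{2(\mathfrak p-1)}.\]
Since $\mathfrak p\geq 1$ by Remark \ref{remark p in Lp^1}, the exponent $2(\mathfrak p-1)$ is non-negative. So the factor $|\tilde{p}_1|^{2(\mathfrak p-1)}$ is harmless wherever $|\tilde{p}_1|$ is bounded, and this holds on all of the compact torus.

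First I will note that $\tilde{p}_1$ is a polynomial, hence continuous on the compact set $\overline{B_{\varepsilon}(\xi)}\cap\T^{n-1}$. So there is $\kappa>0$ with $|\tilde{p}_1|^2\leq\kappa$ there. Near $\xi$ itself, continuity and $\tilde{p}_1(\xi)=0$ even allow $\kappa$ to be taken small by choosing $\varepsilon'$ small, and I then set $\delta=\min\{\varepsilon,\varepsilon'\}$. Monotonicity of the integral of a non-negative function gives $\int_{B_\delta(\xi)}\sigma^{1-\mathfrak p}\,dm<\infty$. I will then bound
\[\int_{B_\delta(\xi)}\rho_\phi^{1-\mathfrak p}\,dm\leq \kappa^{\mathfrak p-1}\int_{B_\delta(\xi)}\sigma^{1-\mathfrak p}\,dm<\infty.\]

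The one point needing care is that $\rho_\phi$ is undefined where $\tilde{p}_1=0$. Here, unlike in three variables, that set may be a continuum of vertical lines, such as a vertical surface. I would argue it is a proper real-analytic subset of $\T^{n-1}$ and hence of Lebesgue measure zero. Indeed, $|\tilde{p}_1|^2$ is a real-analytic function on $\T^{n-1}$ and is not identically zero, since otherwise $\tilde{p}_1\equiv 0$ and $\phi$ would not be a genuine degree-$(\dots,1)$ RIF. Removing a null set does not change either integral, so the pointwise inequality $\rho_\phi^{1-\mathfrak p}\leq\kappa^{\mathfrak p-1}\sigma^{1-\mathfrak p}$ a.e. suffices. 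This measure-zero observation is what I expect to be the main (though mild) obstacle, and it is the only genuinely new ingredient compared with the three-variable case.

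For the final sentence, I will combine compactness with the local characterization. Cover $\mathcal{Z}_p\cap\T^n$ by finitely many small neighbourhoods. By Theorem \ref{thm equiv Lp and integral} in its local form, together with Theorem \ref{theorem Lp with rho and sigma n variables} at points off the vertical lines, global integrability is the minimum of the local critical exponents. The estimate above shows the local exponent of $\rho_\phi$ at $(\xi,\tau)$ is at least that of $\sigma$ at $\xi$. The exponent of $\sigma$ at $\xi$ in turn agrees with that at nearby regular points of the adjacent component $M_j$. This is by the analogue of Corollary \ref{corollary 3 equivalences around the vertical line and its component}, obtained via Theorem \ref{situation better in VL n variable} and Theorem \ref{Demailly}. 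Hence the vertical line never realizes a strictly smaller exponent, and it does not lower the global index.
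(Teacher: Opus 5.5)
Your proposal is correct and is essentially the paper's argument: the bound $\rho_\phi^{1-\mathfrak p}=\sigma^{1-\mathfrak p}|\tilde p_1|^{2(\mathfrak p-1)}\le\kappa^{\mathfrak p-1}\sigma^{1-\mathfrak p}$ carried over from Theorem~\ref{situation better in vertical lines}, with the final sentence resting on the $n$-variable analogue of Corollary~\ref{corollary 3 equivalences around the vertical line and its component}. Your null-set remark for $\mathcal{Z}_{\tilde p_1}\cap\T^{n-1}$ (needed once the vertical lines may form a continuum) is a detail the paper leaves implicit, while the paper's pure-dimensionality argument is the justification for the ``adjacent component $M_j$'' you invoke without comment; that argument shows that $\xi$, or a whole vertical-line set, cannot be a component of $\mathcal{Z}_\sigma\cap\T^{n-1}$ by itself.
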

    \begin{proof}
        First, notice that the assumption of a pure dimensional $\mathcal{Z}_{\sigma}\cap\T^{n-1}$ dictates that $\xi$ is contained in a larger component of the zero set of $\sigma$, i.e. that every open neighbourhood around $\xi$ contains infinitely many points of $\mathcal{Z}_{\rho_{\phi}}\cap\T^{n-1}$. Otherwise, $\{\xi\}$ itself would be a component of the zero set of $\sigma$ on $\T^{n-1}$ with dimension $0$. But this is smaller than $n-2$ for any $n\geq 3$ and therefore contradicts the statement of Theorem \ref{situation better in VL n variable}.

        If $\phi$ contains only isolated vertical line singularities (for instance, as in Example \ref{example 4 variable RIF}), we proceed by extending arguments made in Theorem \ref{situation better in vertical lines} to higher dimensions as before. Applying Theorem \ref{situation better in VL n variable}, and redoing analysis developed in Corollary \ref{corollary 3 equivalences around the vertical line and its component} proves the statements. 

        Alternatively, if $\{\xi\}\times\T$ is a part of a larger vertical line set, such as a vertical surface or a vertical threefold, that set also can't be a \q{self-standing} component. In the smallest dimension where such vertical line sets may appear, which is $n=4$, this would mean that we have an isolated curve of points $\{\xi^{\alpha}\}$ of dimension 1 in a $n-2=2$ -dimensional zero set of $\sigma$ on $\T^{n-1}$. Again, contradiction with the statement of Theorem \ref{situation better in VL n variable}. Now, the same argument clearly holds for higher dimensions too. Therefore, reusing the arguments of Corollary \ref{corollary 3 equivalences around the vertical line and its component}, by swapping the curve $\Gamma$ containing a smooth component $\gamma_i$ and the point at which there is a vertical line, with a higher dimensional $\mathcal{M}=M\cup\{\xi^{\alpha}\}$, for $A\subset\mathcal{Z}_{\sigma} \cap\T^{n-1}$, we prove the statement.
    \end{proof}

    \begin{example}
        We apply the previous result to the rational inner function from Example \ref{example 4 variable rho_phi}, whose zero set of $\rho_{\phi}$ on $\T^3$ is composed of three surfaces $A_1=\{(i,z_2,z_3)\,:\, z_2,z_3\in\T\}$, $A_2=\{(z_1,1,z_3)\,:\, z_1,z_3\in\T\}$, $A_3=\{(z_1,z_2,1)\,:\, z_1,z_2\in\T\}$.
        
        Expanding $\sigma$ into a series around points corresponding to all seven possible cases (points lying in a single component, in the intersection of two components, and in the intersection of all three) gave the same result in each case: the critical integrability of $\mathfrak p^*=\frac{3}{2}$. The details are omitted as the calculation is identical to previous examples.

    \end{example}

    \subsection{Other partial derivatives}
    If $\phi$ is an rational inner function that is linear in $j$-th variable $z_j$ instead of (or in addition to) the last variable $z_n$, the theory developed so far for the integrability of $\frac{\partial\phi}{\partial z_n}$  is directly applicable in the analysis of integrability of $\frac{\partial\phi}{\partial z_j}$. 
     Strictly speaking, the main difference will lie in the terminology, since for example, vertical lines would no longer be called \q{vertical}, but instead various axis-parallel lines.

    \section{Compositions}{\label{Compositions}}
    \subsection{The slice matrix $M_{\phi}$}
    In this section, we introduce the notion of a slice matrix and investigate to what extent the properties established in \cite{sola_2023} remain valid in our setting of curve zero sets. 
    
    \vspace{0.5pc}
    \noindent
    First recall that a $(m_1,\dots,m_{n-1},1)$-degree $\phi$ can be written in the form of (\ref{general formula phi}). Now the following matrix representation comes almost naturally.

    \begin{definition}
        The \textit{slice matrix} of a $(m,1)=(m_1,\dots,m_{n-1},1)$-degree RIF is the function $M_{\phi}:\T^{n-1}\to M_{2,2}(\C)$ given by
        \[M_{\phi}(z)= M_{\phi}(z_1,\dots, z_{n-1})= \begin{pmatrix}
            \tilde{p}_1(z) & \tilde{p}_2(z) \\
            p_2(z) & p_1(z)
        \end{pmatrix}.\]
        The \textit{slice determinant} of $\phi$ is the function $P_{\phi}:\T^{n-1}\to \C$ given by
        \[P_{\phi}(z)=\det M_{\phi}(z), \quad z\in\T^{n-1}.\]
    \end{definition}
    
    Given a finite-singularity RIF with polydegree $(m,1)$, it has been shown in \cite[Lemma 2.]{sola_2023} that for $\xi\in\T^{n-1}$, $P_{\phi}(\xi)=0$ if and only if $(\xi,\tau)$ is a singularity of $\phi$ for some value of $\tau\in\T$. Furthermore,
    \vspace{0.2pc}
    \[\frac{\partial\phi}{\partial z_n}\in L^{\mathfrak p}_{loc}(\T^n) \text{ at } (\xi,\tau) \iff \int_{B_{\varepsilon}(\xi)} |P_{\phi}(z)|^{1-\mathfrak p} dm(z) <\infty.\]

    \vspace{0.2pc}
    In the setting of RIFs with higher-dimensional zero sets, the only difference lies in the failure of $P_{\phi}$ to detect vertical line singularities. In particular, we obtain the following statement.

    \begin{proposition}{\label{prop P_phi same as phi}}
        If $\phi$ contains vertical line singularities, then for all $\zeta=(\zeta_1,\dots,\zeta_{n-1})\in\T^{n-1}$, the slice determinant $P_{\phi}(\zeta)=0$ if and only if $(\zeta,\tau)$ is a singularity of $\phi$ not contained in a vertical line singularity, for some $\tau\in\T$. Then around such $\zeta$ 
        \[\frac{\partial\phi}{\partial z_n}\in L^{\mathfrak p}_{loc}(\T^n) \iff  \int_{B_{\varepsilon}(\zeta)} |P_{\phi}(z_1,\dots,z_{n-1})|^{1-\mathfrak p} dm(z_1,\dots,z_{n-1}) <\infty.\]
		for sufficiently small $\varepsilon>0.$
    \end{proposition}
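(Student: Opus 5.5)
The plan is to reduce everything to the function $\sigma=|\tilde p_1|^2-|\tilde p_2|^2$ by showing that on the torus the slice determinant equals $\sigma$ up to a unimodular monomial factor. Once we have $|P_\phi|=\sigma$ on $\T^{n-1}$, both the zero-set description and the integrability equivalence follow from Lemma \ref{lemma n variables Z phi and Z rho the same} and Theorem \ref{theorem Lp with rho and sigma n variables}.

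\emph{Step 1 (identity on the torus).} Write $p=p_1+p_2z_n$ with polydegree $(m,1)$, $m=(m_1,\dots,m_{n-1})$, and $z'=(z_1,\dots,z_{n-1})$. Apply the definition of the reflection polynomial coefficientwise in $z_n$:
\[\tilde p(z',z_n)=z'^{m}z_n\,\overline{p_1(1/\bar z')}+z'^{m}\,\overline{p_2(1/\bar z')}.\]
This gives $\tilde p_1(z')=z'^m\overline{p_1(1/\bar z')}$ and $\tilde p_2(z')=z'^m\overline{p_2(1/\bar z')}$. For $\zeta\in\T^{n-1}$ we have $1/\bar\zeta=\zeta$ and $|\zeta^m|=1$, so $p_1(\zeta)=\zeta^m\overline{\tilde p_1(\zeta)}$ and $p_2(\zeta)=\zeta^m\overline{\tilde p_2(\zeta)}$. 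Therefore
\[P_\phi(\zeta)=\tilde p_1p_1-\tilde p_2p_2=\zeta^m\big(|\tilde p_1(\zeta)|^2-|\tilde p_2(\zeta)|^2\big)=\zeta^m\sigma(\zeta),\]
and hence $|P_\phi|=\sigma$ on $\T^{n-1}$, using that $\sigma\ge0$ there by (\ref{inequalities for p1 and p2}).

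\emph{Step 2 (zero sets).} By Step 1, $\mathcal Z_{P_\phi}\cap\T^{n-1}=\mathcal Z_\sigma\cap\T^{n-1}$. By Lemma \ref{lemma n variables Z phi and Z rho the same} and the identity $\mathcal Z_{\rho_\phi}=\mathcal Z_\sigma\setminus\{\xi^\alpha\}$ from the proof of Lemma \ref{Z_p and Z_rho_phi (almost) the same}, a point $\zeta$ outside the base points $\xi^\alpha$ of vertical lines satisfies $P_\phi(\zeta)=0$ if and only if $(\zeta,\tau)\in\mathcal Z_p\cap\T^n$ for some $\tau\in\T$.

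This is the step that needs care, since $P_\phi$ also vanishes at each $\xi^\alpha$: there $\tilde p_1=\tilde p_2=0$. So "$P_\phi$ fails to detect vertical lines" must be read as follows. Away from the $\xi^\alpha$, the zeros of $P_\phi$ are exactly the projections of the non-vertical singularities. At the $\xi^\alpha$ themselves, $P_\phi$ vanishes but does not distinguish a vertical line from an ordinary singular point. I would state the equivalence for $\zeta\notin\{\xi^\alpha\}$, which is also the setting of the integrability claim.

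\emph{Step 3 (integrability).} Fix $\zeta$ with $(\zeta,\tau)$ a singularity not on a vertical line. Since $|P_\phi|^{1-\mathfrak p}=\sigma^{1-\mathfrak p}$ pointwise on $\T^{n-1}$, the two local integrals over $B_\varepsilon(\zeta)$ are literally equal. The second equivalence of Theorem \ref{theorem Lp with rho and sigma n variables} then gives $\frac{\partial\phi}{\partial z_n}\in L^{\mathfrak p}_{loc}(\T^n)$ if and only if the $P_\phi$-integral is finite, for $\varepsilon$ small.

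I expect the only real obstacle to be the bookkeeping in Step 2 at the vertical-line base points. The analytic content is the one-line identity in Step 1.
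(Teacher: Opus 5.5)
Your proof is correct and follows the paper's own route: first establish $P_\phi(\zeta)=\zeta^m\sigma(\zeta)$ on $\T^{n-1}$, then invoke Lemma~\ref{lemma n variables Z phi and Z rho the same} and Theorem~\ref{theorem Lp with rho and sigma n variables}. The paper gets the identity via $p_i=\tilde{\tilde p}_i$, while you expand $\tilde p$ coefficientwise in $z_n$, which also justifies pairing $\tilde p_1,\tilde p_2$ with $p_1,p_2$. Your caveat in Step 2 is justified and is glossed over in the paper, which passes from $\mathcal Z_{P_\phi}=\mathcal Z_\sigma$ straight to Lemma~\ref{lemma n variables Z phi and Z rho the same} even though that lemma describes $\mathcal Z_{\rho_\phi}=\mathcal Z_\sigma\setminus\{\xi^\alpha\}$: since $p_1(\xi^\alpha)=p_2(\xi^\alpha)=0$ one has $P_\phi(\xi^\alpha)=0$ (e.g.\ $P_\phi(1,1)=0$ in Example~\ref{Example 5.2.}), so the stated zero-set equivalence holds only for $\zeta\notin\{\xi^\alpha\}$.
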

    \begin{proof}
        Since $z_j\bar{z}_j=1$ on $\T$, for all $j=1,2$, we have the equality
        \[p_i(z_1,z_2)=\tilde{\tilde{p}}_i(z_1,z_2)=z_1^{m_1}\cdots z_{n-1}^{m_{n-1}} \,\overline{\tilde{p}_i\Big(\frac{1}{\bar{z}_1},\dots,\frac{1}{\bar z_{n-1}}\Big)}= z_1^{m_1}\cdots z_{n-1}^{m_{n-1}} \,\overline{\tilde{p}_i(z_1,z_2)} \]
        for $i,j=1,2$. Hence, we can rewrite $P_{\phi}$ as
        \[P_{\phi}=\det M_{\phi}=\tilde{p}_1p_1-p_2\tilde{p}_2= z_1^{m_1}\cdots z_{n-1}^{m_{n-1}}(|\tilde{p}_1|^2-|\tilde{p}_2|^2),\]
        and since $z_1^{m_1}\cdots z_{n-1}^{m_{n-1}}$ is non-vanishing on $\T^{n-1}$, we have $\mathcal{Z}_{P_{\phi}}=\mathcal{Z}_{\sigma}$ on $\T^{n-1}$.
        Since\\ 
        $|z_1|^{m_1}\cdots |z_{n-1}|^{m_{n-1}}=1$ is bounded on $\T^{n-1}$, applying Lemma \ref{lemma n variables Z phi and Z rho the same} and Theorem \ref{theorem Lp with rho and sigma n variables} now proves all the claims in their respective order. 
    \end{proof}

    \begin{example}
    Calculation of the slice matrix of the RIF in Example \ref{Example 5.2.},
	\[M_{\phi}(z_1,z_2) = \begin{pmatrix}
		-2-z_1-z_1^2+z_2-z_1z_2+4z_1^2z_2 && 1-z_1-z_1 z_2 +z_1^2z_2 \\
		1-z_1-z_1z_2+z_1^2z_2 && 4-z_1+z_1^2-z_2-z_1z_2-2z_1^2z_2
	\end{pmatrix}\]
	gives the determinant
	\begin{align*}
		\det M_{\phi}(z_1,z_2) & = -9-6z_1^2-z_1^4+6z_2+20z_1^2z_2+6z_1^4z_2-z_2^2-6z_1^2z_2^2-9z_1^4z_2^2\\
		& = -(-3-z_1^2+z_2+3z_1^2z_2)^2.
	\end{align*}
    Since 
    \[\det M_{\phi}(z_1,z_2)=0 \iff z_2(1+3z_1^2) = (3+z_1^2) \iff z_2=\frac{3+z_1^2}{1+3z_1^2},\]  the determinant vanishes precisely at the restriction of zero set of $\mathcal{Z}_p \,\cap \mathbb{T}^3$ to the first two variables, that is, precisely when $\sigma$ vanishes. 
    \end{example}

    This claim is even stronger than it might seem. The matrix $M_{\phi}$ helps determine the local critical integrability around every zero point not contained in a vertical line singularity, but by Theorem \ref{situation better in VL n variable}, that might be all we need. If $\sigma$ has a pure dimensional zero set of dimension $n-2$, the integrability around such points cannot be worse than the integrability of the component of the zero set passing through it, therefore $M_{\phi}$ gives us information about the global integrability index $\mathfrak p^*$ as well.

\subsection{Compositions $\phi^N$ and $M_{\phi}^N$}

   In this section, we study iterations of $\phi$ using the slice matrix $M_{\phi}$. We will observe how the zero sets of $\phi$ on $\T^n$ and $\rho_{\phi}$ on $\T^{n-1}$ evolve and how the integrability behaves under composition. First, let's define what we mean by composition.

   \begin{definition}
       For $(m,1)$-degree RIF $\phi$, we define $\phi^N:\C^n\to\C$ inductively as
       \[\phi^N:=\phi(z_1,\dots,z_{n-1},\phi^{N-1}(z_1,\dots,z_n)),\quad \text{for } N\geq 2.\]
   \end{definition}

   From the discussion in \cite[Section 3]{sola_2023}, we know that $\phi^N$ are rational inner functions as well, and that in the case of finite-singularity RIFs that don't experience a polydegree drop, they preserve the zero set of $\phi$. By \textit{polydegree drop} we mean that  the numerator and denominator of $\phi^N$ share a common factor, which then cancels. For example, see \cite[Example 1]{dynamics_sola_tullydoyle} where $\phi(z_1,z_2)=-(2-z_1z_2-z_1-z_2)/(2-z_1-z_2)$ is shown to have all iterations $\phi^N$ of bidegree $(1,1)$. The following lemma shows that the above property also holds for curve singularities.

   \begin{lemma}{\label{zero set of phi and phi^N the same}}
        Let $\phi$ be a $(m_1,\dots,m_{n-1},1)$-degree irreducible RIF and $\phi^N:=\frac{\tilde{p}^{N}}{p^{N}}$ defined as before, with no polydegree drop. Then $\phi^N$ has the same singular set on $\T^n$ as $\phi$, preserving all point, curve and higher-dimensional singularities, including the vertical lines, if $\mathcal{Z}_p\cap\T^n$ contains any.
   \end{lemma}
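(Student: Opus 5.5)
The plan is to encode the iteration in $z_n$ as powers of the slice matrix. For fixed $z'=(z_1,\dots,z_{n-1})$, the map $w\mapsto \phi(z',w)=\frac{\tilde p_1(z')w+\tilde p_2(z')}{p_2(z')w+p_1(z')}$ is the linear fractional transformation associated with $M_\phi(z')$. Composition of linear fractional maps corresponds to matrix multiplication, so $\phi^N(z',w)$ is the linear fractional map of $M_\phi(z')^N$. Concretely, writing
\[M_\phi(z')^N=\begin{pmatrix} a_N(z') & b_N(z')\\ c_N(z') & d_N(z')\end{pmatrix},\]
one has, as polynomials, $\tilde p^{N}=a_N z_n+b_N$ and $p^N=c_N z_n+d_N$. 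The hypothesis of no polydegree drop is exactly what guarantees these are the reduced numerator and denominator of $\phi^N$, so that $M_{\phi^N}=M_\phi^N$. In particular $\phi^N$ is again an $(m',1)$-degree RIF, by the discussion from \cite[Section 3]{sola_2023}, and all earlier results of Sections \ref{section on curve singularities}--\ref{section higher dimensions} apply to it.

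\textbf{Step 1: projections of the singular sets agree.} Taking determinants gives $P_{\phi^N}=\det M_\phi^N=P_\phi^N$. By the computation in the proof of Proposition \ref{prop P_phi same as phi}, $P_\psi$ equals a unimodular monomial times $\sigma_\psi$ on $\T^{n-1}$ for any such RIF $\psi$. Hence $\mathcal Z_{\sigma_{\phi^N}}\cap\T^{n-1}=\mathcal Z_{\sigma_\phi}\cap\T^{n-1}$. Lemma \ref{lemma n variables Z phi and Z rho the same}, applied to both $\phi$ and $\phi^N$, then shows that the two singular sets on $\T^n$ have the same projection to $\T^{n-1}$. This identifies point, curve and higher-dimensional components at the level of the first $n-1$ coordinates.

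\textbf{Step 2: vertical lines.} At a point $\xi$ with $\tilde p_1(\xi)=0$, inequality (\ref{inequalities for p1 and p2}) together with $|p_i|=|\tilde p_i|$ on the torus forces $M_\phi(\xi)=0$. Hence $M_\phi(\xi)^N=0$, so $\tilde p^{N}$ and $p^N$ vanish on all of $\{\xi\}\times\T$, and every vertical line of $\phi$ is a vertical line of $\phi^N$. Conversely, if $M_\phi(\xi)\neq 0$, I will show $\tilde p^{N}(\xi,\cdot)\not\equiv 0$ in Step 3, so $\phi^N$ acquires no new vertical lines.

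\textbf{Step 3: matching the fibers.} It remains to show that the fiber coordinate $\tau$ is preserved when $\sigma(\zeta')=0$ but $M_\phi(\zeta')\neq0$. In this case $M=M_\phi(\zeta')$ has rank one, so by Cayley--Hamilton $M^N=t^{N-1}M$ with $t=\operatorname{tr}M=\tilde p_1(\zeta')+p_1(\zeta')$. If $t\ne0$, the matrix $M^N$ is a nonzero multiple of $M$. Then $p^N(\zeta',\cdot)$ is a nonzero multiple of $p(\zeta',\cdot)$ and vanishes at the same unique $\tau=-p_1(\zeta')/p_2(\zeta')$, and likewise $\tilde p^N(\zeta',\cdot)\not\equiv0$. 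For $\zeta'$ with $\sigma(\zeta')\neq0$, the slice of $\phi$ is a disk automorphism, hence so is the slice of $\phi^N$, and there is no singularity over $\zeta'$ for either function.

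\textbf{Main obstacle.} The hard part is the degenerate case $t=0$, i.e.\ a nilpotent rank-one slice matrix, where $M^N=0$ for $N\ge2$ and a new vertical line would appear. I would first try to rule this out directly. Using $p_i=z'^{m}\overline{\tilde p_i}$ on the torus, $t=\tilde p_1+z'^m\overline{\tilde p_1}$, and rank one with $|\tilde p_1|=|\tilde p_2|\neq0$ determines the unimodular constant relating the two rows. I expect this forces $t\neq0$, perhaps only after excluding $z'^m\overline{\tilde p_1}=-\tilde p_1$. If that fails, I would argue that such an identical vanishing along a positive-dimensional set of slices would create a common factor of $\tilde p^N$ and $p^N$, contradicting the no polydegree drop assumption. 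At isolated slices, I would use the finiteness of $\mathcal Z_{|\tilde p_1|^2}$-type arguments, as in Lemma \ref{finitely VLS}, to control the exceptional points.
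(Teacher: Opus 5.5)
Your Steps 1 and 2 are the paper's proof: $M_{\phi^N}=M_\phi^N$, so $P_{\phi^N}=P_\phi^N$; Lemma~\ref{lemma n variables Z phi and Z rho the same} then gives equal projections; and old vertical lines persist because $M_\phi(\xi)=0$ there. The paper stops at that point. You are right that this only identifies the projections of the singular sets to $\T^{n-1}$: it neither matches the fibre coordinate nor excludes new vertical lines.

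Your Step 3 settles both whenever $\operatorname{tr}M_\phi(\zeta')\neq0$. The nilpotent case you flag, however, is a genuine obstruction that cannot be removed. Suppose $\sigma(\zeta')=0\neq\tilde p_1(\zeta')$. Then the rows of $M_\phi(\zeta')$ are proportional, $\phi(\zeta',\cdot)\equiv\alpha\in\T$, and the singular point over $\zeta'$ has $\tau=-\tilde p_2(\zeta')/\tilde p_1(\zeta')$. A two-line computation gives $\operatorname{tr}M_\phi(\zeta')=0\iff\alpha=\tau$. Replacing $\phi$ by $\mu\phi$ with $\mu\in\T$ keeps the form $\tilde q/q$ (take $q=cp$ with $\bar c/c=\mu$), rotates $\alpha$, and fixes $\tau$. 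So $\alpha=\tau$ can be arranged at any chosen singular slice, and your expectation that rank one forces $t\neq0$ is false.

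Concretely, let $\psi=-\frac{3z_1z_2z_3-z_1z_2-z_1z_3-z_2z_3}{3-z_1-z_2-z_3}$, whose only singularity on $\T^3$ is $(1,1,1)$. Up to a scalar factor,
\[
M_\psi=\begin{pmatrix} z_1+z_2-3z_1z_2 & z_1z_2\\ -1 & 3-z_1-z_2\end{pmatrix},\qquad \operatorname{tr}M_\psi=3(1-z_1z_2),\qquad M_\psi(1,1)=\begin{pmatrix}-1&1\\-1&1\end{pmatrix}.
\]
Hence $M_\psi(1,1)^2=0$, and the denominator $(3-z_1-z_2)^2-z_1z_2-3(1-z_1z_2)z_3$ of $\psi^2$ vanishes on all of $\{(1,1)\}\times\T$.

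There is no polydegree drop. Two entries of $M_\psi^2$ are $3z_1z_2(1-z_1z_2)$ and $-3(1-z_1z_2)$, while the $(1,1)$ entry $(z_1+z_2-3z_1z_2)^2-z_1z_2$ equals $24$ at $(-1,-1)$, so the four entries are coprime. Thus $\psi^2$ has a vertical line singularity that $\psi$ lacks, and the statement fails as written.

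Your fallbacks cannot rescue it:
\begin{itemize}
\item For $n\geq3$, simultaneous vanishing of $\tilde p^{N}$ and $p^{N}$ on $\{\zeta'\}\times\C$ is a codimension-two condition and forces no common factor. This is exactly why vertical lines can exist at all.
\item Finiteness arguments only bound the number of exceptional slices; they do not remove them.
\item Your positive-dimensional intuition is correct but does not reach isolated slices. For $-\phi$ with $\phi$ as in Example~\ref{Example 5.2.}, one gets $\operatorname{tr}=-2g$ and $\det=g^2$ with $g=-3-z_1^2+z_2+3z_1^2z_2$. Then $M^2=(\operatorname{tr}M)M-(\det M)I$ is divisible by $g$, so the no-polydegree-drop hypothesis does exclude that case, but isolated nilpotent slices survive.
\end{itemize}

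What is missing is an extra hypothesis, for example $\operatorname{tr}M_\phi(\zeta')\neq0$ (equivalently $\alpha(\zeta')\neq\tau(\zeta')$) at every $\zeta'\in\mathcal Z_\sigma\cap\T^{n-1}$ with $\tilde p_1(\zeta')\neq0$. Under that hypothesis your Steps 1--3 form a complete proof, more careful than the paper's.
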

   \begin{proof}
       Taking the square of the slice matrix $M_{\phi}$ gives
       \[M^2_{\phi}=\begin{pmatrix}
           \tilde{p}_1 & \tilde{p}_2 \\
           p_2 & p_1
       \end{pmatrix} \begin{pmatrix}
           \tilde{p}_1 & \tilde{p}_2 \\
           p_2 & p_1
       \end{pmatrix} = \begin{pmatrix}
           \tilde{p}_1 \tilde{p}_1+ \tilde{p}_2 p_2 & \tilde{p}_1\tilde{p}_2 + \tilde{p}_2 p_1 \\
           \tilde{p}_1 p_2+ p_1 p_2 & p_2 \tilde{p}_2 +p_1p_1
       \end{pmatrix},\]
       which is precisely the slice matrix of $\phi^2$. This can be checked directly by substituting
       \begin{align*}
            \phi^2(z_1,\dots,z_n):=\phi(z_1,\dots,z_{n-1},\phi(z_1,\dots,z_n))&=\frac{\tilde{p}_1\cdot \frac{z_n\cdot\tilde{p}_1+ \tilde{p}_2}{z_n\cdot p_2 + p_1} + \tilde{p}_2}{p_2\cdot \frac{z_n\cdot\tilde{p}_1+ \tilde{p}_2}{z_n\cdot p_2 + p_1} + p_1}\\
            &= \frac{\tilde{p}_1(z_n\cdot\tilde{p}_1 + \tilde{p}_2)+\tilde{p}_2(z_n\cdot p_2 +p_1)}{p_2(z_n\cdot\tilde{p}_1+\tilde{p}_2)+p_1(z_n\cdot p_2+p_1)} \\
            &=\frac{z_n(\tilde{p}_1\tilde{p}_1+\tilde{p}_2 p_2)+\tilde{p}_1\tilde{p}_2+\tilde{p}_2p_1}{z_n(p_2\tilde{p}_1+p_1p_2)+p_2\tilde{p}_2+p_1p_1}.
       \end{align*}
       By induction, we get that $M_{\phi^N}=M_{\phi}^N$ for all iterations $N\in\mathbb{N}$. Thus,
       \[P_{\phi^N}=P_{\phi}\cdots P_{\phi},\]
       so the zero sets of $P_{\phi^N}$ and $P_{\phi}$ on $\T^{n-1}$ coincide. If $\phi$ is a finite-singularity RIF, the claim follows by \cite{sola_2023}. On the other hand, if $\phi$ has higher-dimensional singularities, Lemma \ref{lemma n variables Z phi and Z rho the same} implies that these are preserved under $N$ iterations as well. This also includes vertical line singularities, which can be seen directly from the form of $M_{\phi^N}$; writing $p^{N}=z_n\cdot p_2^{N}+p_1^{N}$, both $p_1^{N}$ and $p_1^{N}$ are sums and products of $p_1$ and $p_2$, and hence vanish at such points. 
    \end{proof}

    This result makes the analysis of integrability conditions of $\frac{\partial \phi ^N}{ \partial z_n}$ quite simple. We recall from the discussion after Theorem \ref{theorem_rho=p1-p2} that the partial derivative $\frac{\partial \phi}{\partial z_n}\in L^1_{loc}(\T^n)$ for all singularities $\xi\in\T^n$ of $\phi$. Therefore, if $\mathfrak p^*$ is its local $z_n$-derivative integrability index at a singularity $\xi\in\T^n$, we can write 
    \vspace{-0.5pc}
    \[\mathfrak p^*:=1+\mathfrak q^*,\]
    for some $\mathfrak q^*\geq0$. Now we get the following theorem. 
   
    \begin{theorem}{\label{p=1+q/N}}
        Let $\phi$ be a $(m_1,\dots,m_{n-1},1)$-degree irreducible RIF and $\phi^N=\frac{\tilde{p}^{N}}{p^{N}}$ with no polydegree drop. Let $\zeta=(\zeta_1,\dots,\zeta_n)\in\T^n$ be a singularity of $\phi$ not contained in a vertical line singularity. If $\mathfrak p^*=1+\mathfrak q^*$ is the local $z_n$-derivative integrability index at  $\zeta$, then $\phi^N$ has the local $z_n$-derivative integrability index equal to $\mathfrak p^*_{(N)}=1+\frac{\mathfrak q^*}{N}$  at $\zeta$.    
    \end{theorem}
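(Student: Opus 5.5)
The plan is to reduce everything to the slice determinant and use the multiplicativity $P_{\phi^N}=P_\phi^N$ obtained in the proof of Lemma \ref{zero set of phi and phi^N the same}.

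First, $\phi^N$ is again an irreducible $(m_1,\dots,m_{n-1},1)$-degree RIF. This holds because there is no polydegree drop and $M_{\phi^N}=M_\phi^N$. Moreover, $\zeta$ is a singularity of $\phi^N$ that is not on a vertical line: by Lemma \ref{zero set of phi and phi^N the same} the singular sets coincide, and the vertical lines of $\phi^N$ are exactly those of $\phi$. For the latter, one checks that the $(1,1)$ entry $\tilde p_1^{(N)}$ of $M_\phi^N$ vanishes only where $\tilde p_1$ does. The inequality $|\tilde p_1^{(N)}|\ge|\tilde p_2^{(N)}|$ together with $\det M_{\phi^N}=P_\phi^N$ should give this. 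Alternatively, Proposition \ref{prop P_phi same as phi} can be applied directly to $\phi^N$ at a point where $P_\phi(\hat\zeta)=0$ with $\tilde p_1(\hat\zeta)\neq 0$, where $\hat\zeta=(\zeta_1,\dots,\zeta_{n-1})$.

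Second, apply Proposition \ref{prop P_phi same as phi} (the finite-singularity version being \cite[Lemma 2]{sola_2023}) to both $\phi$ and $\phi^N$ at $\hat\zeta$. This gives
\[
\frac{\partial\phi^N}{\partial z_n}\in L^{\mathfrak p}_{loc} \text{ at }\zeta \iff \int_{B_\varepsilon(\hat\zeta)}|P_{\phi}|^{N(1-\mathfrak p)}\,dm<\infty .
\]
Setting $\mathfrak p'=1+N(\mathfrak p-1)$, we have $N(1-\mathfrak p)=1-\mathfrak p'$, so the right-hand integral is exactly the criterion for $\frac{\partial\phi}{\partial z_n}\in L^{\mathfrak p'}_{loc}$ at $\zeta$. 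Hence $\frac{\partial \phi^N}{\partial z_n}\in L^{\mathfrak p}_{loc}$ if and only if $\frac{\partial\phi}{\partial z_n}\in L^{1+N(\mathfrak p-1)}_{loc}$. Taking suprema, $\mathfrak p^*_{(N)}$ is the supremum of all $\mathfrak p$ with $1+N(\mathfrak p-1)<\mathfrak p^*$ (or $\le$, depending on whether the endpoint is attained, which is irrelevant for the supremum). This gives $\mathfrak p^*_{(N)}-1=\mathfrak q^*/N$. The edge cases need a line each. If $\mathfrak q^*=0$, the index stays $1$, consistent with Remark \ref{remark p in Lp^1}; if $\mathfrak q^*=\infty$ the result is also immediate. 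Note that $\mathfrak p\ge 1$ maps to $\mathfrak p'\ge1$, so the equivalence stays in the range where the criterion applies.

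The main obstacle is the first step: justifying that the local integral criterion may be applied to $\phi^N$ at $\zeta$. This requires that $\tilde p_1^{(N)}(\hat\zeta)\ne0$, i.e.\ that iteration creates no new vertical line through $\zeta$, and that the slice matrix of $\phi^N$ is really $M_\phi^N$ with no cancellation. I would first derive the nonvanishing from the no-polydegree-drop hypothesis together with the description of vertical lines as common zeros of $\tilde p_1^{(N)}$ and $\tilde p_2^{(N)}$. If that fails, I would argue directly: if $\tilde p_1^{(N)}(\hat\zeta)=0$, then the whole line $\{\hat\zeta\}\times\T$ lies in $\mathcal Z_{p^{N}}\cap\T^n$, hence in $\mathcal Z_p\cap\T^n$ by Lemma \ref{zero set of phi and phi^N the same}. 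That would force $\sigma$ to vanish on a vertical line of $\phi$, and so $\tilde p_1(\hat\zeta)=0$, contradicting the hypothesis.
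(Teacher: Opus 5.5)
Your reduction is the paper's: apply Proposition~\ref{prop P_phi same as phi} to $\phi$ and to $\phi^N$ at $\hat\zeta=(\zeta_1,\dots,\zeta_{n-1})$, use $P_{\phi^N}=P_\phi^N$, and rescale the exponent. Your substitution $\mathfrak p'=1+N(\mathfrak p-1)$ is a cleaner form of the paper's bookkeeping. You also correctly isolate the hypothesis the paper uses silently when it applies the $P$-criterion to $\phi^N$, namely $\tilde p_1^{(N)}(\hat\zeta)\neq0$.

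\textbf{The gap.} That step cannot be closed, because it is false in general. Since $P_\phi(\hat\zeta)=0$ while $\tilde p_1(\hat\zeta)\neq0$, the matrix $M_\phi(\hat\zeta)$ has rank one. Cayley--Hamilton then gives $M_\phi^N(\hat\zeta)=\big(\operatorname{tr}M_\phi(\hat\zeta)\big)^{N-1}M_\phi(\hat\zeta)$. Hence all of $M_\phi^N(\hat\zeta)$, in particular $\tilde p_1^{(N)}(\hat\zeta)$, vanishes for every $N\ge2$ as soon as $\tilde p_1(\hat\zeta)+p_1(\hat\zeta)=0$. Equivalently, this happens when the constant value of the slice $\phi(\hat\zeta,\cdot)$ equals $\zeta_n$. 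Neither $|\tilde p_1^{(N)}|\ge|\tilde p_2^{(N)}|$ nor $\det M_{\phi^N}=P_\phi^N$ excludes this.

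Your fallback through Lemma~\ref{zero set of phi and phi^N the same} fails too, because that lemma is false in this situation. Its proof only shows $\mathcal Z_{P_{\phi^N}}=\mathcal Z_{P_\phi}$ on $\T^{n-1}$, i.e.\ that the projections agree. It says nothing about the fibres over them, which can grow from a point to a whole circle.

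\textbf{A counterexample.} Take $p=3+z_1-z_2-z_3$, so $\tilde p=3z_1z_2z_3+z_2z_3-z_1z_3-z_1z_2$.
\begin{itemize}
\item The only singularity of $\phi$ on $\T^3$ is $\zeta=(-1,1,1)$, and there are no vertical lines.
\item $M_\phi(-1,1)=\begin{pmatrix}-1&1\\-1&1\end{pmatrix}$ is nilpotent, so $M_\phi^2(-1,1)=0$.
\item There is no polydegree drop: $\operatorname{tr}M_\phi=3(1+z_1z_2)$ does not divide $\det M_\phi$, and $\det M_\phi^2\not\equiv0$.
\item Nevertheless $\phi^2$ acquires the vertical line $\{(-1,1)\}\times\T$.
\end{itemize}
In coordinates $(z_1,z_2)=(-e^{i\theta_1},e^{i\theta_2})$ one has $\sigma\asymp|\theta|^2$, so $\mathfrak p^*=2$ and $\mathfrak q^*=1$. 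For $\phi^2$, use the localized $\rho$-criterion of Theorem~\ref{thm equiv Lp and integral}. The relevant integrand is $\rho_{\phi^2}^{1-\mathfrak p}=\sigma^{2(1-\mathfrak p)}|\tilde p_1^{(2)}|^{2(\mathfrak p-1)}\lesssim|\theta|^{-2(\mathfrak p-1)}$, because the polynomial $\tilde p_1^{(2)}$ vanishes at $(-1,1)$. Hence $\frac{\partial\phi^2}{\partial z_3}\in L^{\mathfrak p}_{loc}$ at $\zeta$ for all $\mathfrak p<2$, not just $\mathfrak p<\frac{3}{2}$. So the statement itself fails here, and the paper's proof has the same hidden gap as yours.

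\textbf{What your argument does prove.} In general you get the one-sided bound $\mathfrak p^*_{(N)}\ge1+\mathfrak q^*/N$, since $|\tilde p_1^{(N)}|$ is bounded and so $\rho_{\phi^N}^{1-\mathfrak p}\lesssim|P_\phi|^{N(1-\mathfrak p)}$. You get equality once you add the hypothesis $\tilde p_1(\hat\zeta)+p_1(\hat\zeta)\neq0$.

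A minor point: $\phi^N$ does not have polydegree $(m_1,\dots,m_{n-1},1)$, and its irreducibility is unclear. Only linearity in $z_n$ is needed for the integral criterion.
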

    \begin{proof}
        Proposition \ref{prop P_phi same as phi} gives
        \[\frac{\partial\phi}{\partial z_n}\in L^{\mathfrak p}_{loc}(\T^n) \iff  \int_{B_{\varepsilon}(\zeta_1,\dots,\zeta_{n-1})} |P_{\phi}(z_1,\dots,z_{n-1})|^{1-\mathfrak p} dm(z_1,\dots,z_{n-1}) <\infty\]   
        for $\varepsilon>0$ small enough. By Lemma \ref{zero set of phi and phi^N the same}, the singular sets of $\phi$ and $\phi^N$ coincide on $\T^n$, which together with
        \vspace{-0.5pc}
         \[P_{\phi^N}=P_{\phi}\cdots P_{\phi}\]
         gives
         \[\frac{\partial\phi^N}{\partial z_n}\in L^{\mathfrak p}_{loc}(\T^n) \iff  \int_{B_{\varepsilon}(\zeta_1,\dots,\zeta_{n-1})} |P_{\phi}(z_1,\dots,z_{n-1})|^{N(1-\mathfrak p)} dm(z_1,\dots,z_{n-1}) <\infty\] 
         for small $\varepsilon$. The expression $\mathfrak p^*:=1+\mathfrak q^*$ for $\phi$ comes from $\frac{1-\mathfrak p}{\mathfrak q}\geq -1$, which in turn for $\phi^N$ gives $\frac{N(1-\mathfrak p)}{\mathfrak q}\geq -1$. After simplifying, this means that $\mathfrak p\leq 1+\frac{\mathfrak q}{N}$,
         so $\mathfrak p^*_{(N)}=1+\frac{\mathfrak q^*}{N}$ at $\zeta$.
    \end{proof}

    \begin{remark*}
        Similar to how the local $L^{\mathfrak p}_{loc}(\T^n)$-integrability of $\frac{\partial\phi}{\partial z_n}$ at a point contained in a vertical line doesn't affect the global $L^{\mathfrak p}(\T^n)$-integrability of $\frac{\partial\phi}{\partial z_n}$ (as seen in Theorem \ref{situation better in vertical lines}), this shows that neither does it affect the global integrability of $\frac{\partial\phi^N}{\partial z_n}$.
    \end{remark*}

    \begin{example}
        We will apply the previous claims about compositions of $\phi$ and their integrability on Example \ref{Example 5.2.}. The RIF $\phi^2$ has the numerator
        \begin{align*}
            \tilde{p}^2(z_1,z_2)&=17-10z_1+10z_1^2-2z_1^3+z_1^4-8z_2-8z_1 z_2-12 z_1^2 z_2-4z_1^4 z_2+ \\&+z_2^2+2z_1z_2^2+6z_1^2z_2^2+2z_1^3z_2^2+5z_1^4z_2^2 + \\ 
            &+z_3(2-4z_1+2z_1^2-4z_1z_2-8z_1^2z_2-4z_1^3z_2+2z_1^2z_2^2-4z_1^3z_2^2+2z_1^4z_2^2),
        \end{align*}
        which by further calculation gives
        \[\sigma^2(z_1,z_2)=\frac{(-3-z_1^2+z_2+3z_1^2z_2)^4}{z_1^4z_2^2}\]
        and 
        \vspace{-0.5pc}
        \[P_{\phi^2}(z_1,z_2)=(-3-z_1^2+z_2+3z_1^2z_2)^4.\]
        Thus $\mathcal{Z}_{\sigma^2}\cap\T^2=\mathcal{Z_{\sigma}}\cap\T^2$
        and
        $\mathcal{Z}_{|\tilde{p}_1^{2}|^2}\cap\mathcal{Z}_{\sigma^2}\cap\T^2={\{(1,1)\}}.$
        Either by composing $\phi^2$ further with $\phi$, or by simply multiplying $P_{\phi^2}$ with $P_{\phi}$, we get
        \[\sigma^{N}(z_1,z_2)=\frac{(-3-z_1^2+z_2+3z_1^2z_2)^{2N}}{z_1^{2N}z_2^N}= \frac{P_{\phi^N}(z_1,z_2)}{z_1^{2N}z_2^N}\]
        for all $N\in\mathbb{N}$. Therefore, zero sets of $\phi$ on $\T^3$ and $\rho_{\phi}$, $\sigma$ and $|\tilde{p}_1|^2$ on $\T^2$ are all preserved under iterations, as Lemma \ref{zero set of phi and phi^N the same} claimed.

        \vspace{0.5pc}
         In Example \ref{Example 5.2. global Lp} we have shown that the global critical $z_3$-integrability index of $\phi$ is $\mathfrak p^*=\frac{3}{2}=1+\frac{1}{2}$. By Theorem \ref{p=1+q/N}, it follows that the global $\mathfrak p^*_{(N)}$ of $\phi ^N$ equals to $1+\frac{1}{2N}$, for $N\geq 1$. For example, around $(\theta_1,\theta_2)=(
        \pi,0)$, we have the series expansion (\ref{equation Example 5.2. sigma Lp}), and further,
        \[\sigma^2(\pi-\theta_1,\theta_2)=-256(\theta_1-\theta_2)^4+\mathcal{O}(\|\theta\|^6)\]
        and 
         \[\sigma^3(\pi-\theta_1,\theta_2)=4096(\theta_1-\theta_2)^6+\mathcal{O}(\|\theta\|^8).\]
         Thus $\frac{\partial\phi^2}{\partial z_3},\frac{\partial\phi^3}{\partial z_3}\in L^{\mathfrak p}_{loc}(\T^3)$ around $(\pi,0,\tau)$, $\tau\in\T$ if and only if $\mathfrak p<\frac{5}{4}$ and $\mathfrak p<\frac{7}{6}$, respectively.   
    \end{example}

    \section{A further example}{\label{Examples}}
    
	We construct an example of a type that, to the best of our knowledge, has not appeared previously in literature. In what follows we will see an RIF with two vertical line singularities. Analogous to Example \ref{example 4 variable RIF}, following the matrix construction as in \cite[Example 5.1.]{bickel2022singularities}, we set
	\begin{align*}
		A &= \begin{pmatrix}
			1 & 0 & 1 & 0 \\
			0 & 0 & 1 & 0 \\
			1 & 1 & 0 & 0 \\
			0 & 0 & 0 & 1 
		\end{pmatrix},
		&
		Y_1 &= \begin{pmatrix}
			1 &  &  &  \\
			 & 0 &  &  \\
			 &  & 0 &  \\
			&  &  & 0 
		\end{pmatrix},
		&
		Y_2 &= \begin{pmatrix}
			0 &  &  &  \\
			 & 1 &  &  \\
			 &  & 1 &  \\
			 &  &  & 0 \\
		\end{pmatrix}
        \end{align*}
		and therefore
        \vspace{-1pc}
		\[Y_3 = I - Y_1-Y_2 = \begin{pmatrix}
			0 &  &  &  \\
			 & 0 &  &  \\
			 &  & 0 &  \\
			 &  &  & 1
		\end{pmatrix}.\]
	Here it was necessary for $A$ to be self-adjoint, and $Y_1$ and $Y_2$ to be contractions. By choosing vector $v=(1,0,0,1)^T$, by \cite{agler2016nevanlinna} we obtain the following Pick function on the poly-upper half-plane $\Pi^3$:
	\begin{align*}
		f(w) & = \;\langle (A-w_1 Y_1-w_2Y_2-w_3Y_3)^{-1}v,v\rangle\\
		&=\frac{2-w_1-w_2-2w_2^2+w_1w_2^2-w_3+w_2^2w_3}{(1-w_1-w_2-w_2^2+w_1w_2^2)(1-w_3)},\quad w=(w_1,w_2,w_3)\in\Pi^3.
	\end{align*}
    As before in Example \ref{example 4 variable RIF}, taking the composition
	\begin{align*}
		\phi(z_1,z_2,z_3)=(\beta\circ f \circ m) (z_1,z_2,z_3),
	\end{align*}
	now gives a new RIF $\phi=\frac{\tilde{p}}{p}$, where
	\begin{align*}
	    \tilde{p}(z_1,z_2,z_3)=&-1-4i+(3-4i)z_1+(1-4i)z_2^2+(5-4i)z_1z_2^2\\
        &+z_3\,(3-6i+(7+2i)z_1+(5-2i)z_2^2+(9+6i)z_1z_2^2)\\
        p(z_1,z_2,z_3)=&9-6i+(5+2i)z_1+(7-2i)z_2^2+(3+6i)z_1z_2^2\\
        &+z_3\,(5+4i+(1+4i)z_1+(3+4i)z_2^2-(1-4i)z_1z_2^2).
	\end{align*}
	This rational inner function has two vertical line singularities, namely 
    \[\{(-1,i)\}\times\T\, \cup\, \{(-1,-i)\}\times\T\ \subset \mathcal{Z}_p\,\cap\,\T^3.\]

    \noindent
    The zero set of $\rho_{\phi}$ on $\T^2$ is of the form $(\mathcal{Z}_{\sigma}\cap\T^2) \cup \{(-1,i),(-1,-i)\}\times\T$, where
    \begin{align}{\label{example two vertical lines sigma}}
        \mathcal{Z}_{\sigma}\cap\T^2=\bigg\{\bigg(\frac{-2+3i+(i-2)z_2^2}{2+i+(2+3i)z_2^2},z_2\bigg):z_2\in\T\bigg\}.
    \end{align}
    The entirety of $\mathcal{Z}_{\rho_{\phi}}\cap\T^2$ is composed of two smooth curves; see Figure \ref{fig:2 vertical lines}. However, the zero set of $\sigma$ on $\T^3$ is a smooth curve, therefore it is enough to look at the local derivative integrability around any point of the zero set (\ref{example two vertical lines sigma}). Calculation shows that $\frac{\partial \phi}{\partial z_3}\in L^{\mathfrak{p}}(\T^3)$ for $\mathfrak{p}< \frac{3}{2}$.

    \begin{figure}[h]
	   \centering
		\includegraphics[width=4.5cm]{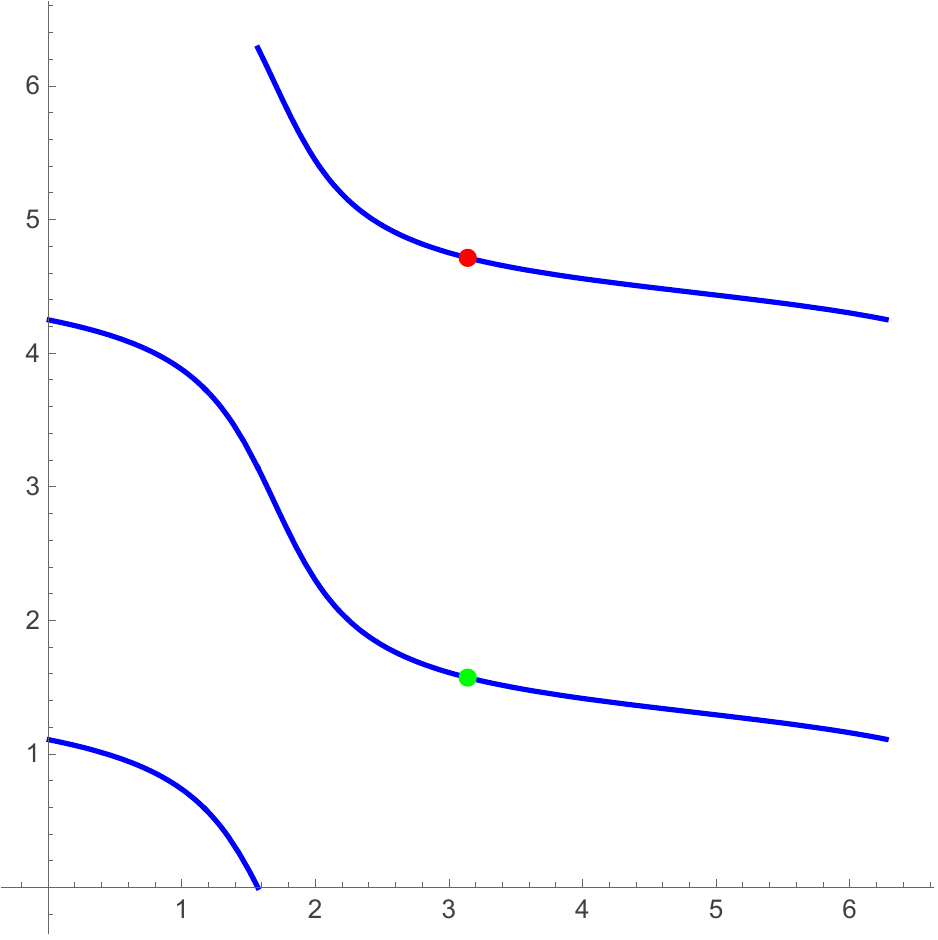}
		\caption{Zero set of $\rho_{\phi}$ shifted for visibility to $(0,2\pi]^2$, with singular points $(\pi,\frac{\pi}{2})$ and $(\pi,\frac{3\pi}{2})$.}
		\label{fig:2 vertical lines}
	\end{figure}
    \noindent
    In order to study the iterations $\phi^N$, we raise $P_{\phi}$ to the $N$-th power. Since
    \[P_{\phi}(z_1,z_2)=\frac{(2-3i+(2+i)z_1+(2-i)z_2^2+(2+3i)z_1z_2^2)^2}{z_1 z_2^2},\]
    we obtain
    \[P_{\phi^N}(z_1,z_2)=\frac{(2-3i+(2+i)z_1+(2-i)z_2^2+(2+3i)z_1z_2^2)^{2N}}{(z_1 z_2^2)^N}.\]
    The zero set of $\sigma$, that is, of $P_{\phi}$, stays preserved under the composition of the function with itself by Lemma \ref{zero set of phi and phi^N the same}. Using the notation of Theorem \ref{p=1+q/N}, we have $\mathfrak{q}^*=\frac{1}{2}$. Then, that same theorem will give us the global critical integrability index of $\phi^N$, notably $\mathfrak{p}^*_N=1+\frac{1}{2N}$.

    \section*{Acknowledgements}
    \noindent
    This material has been adapted from the author's Master's thesis at Stockholm University. 
    The author extends her deepest gratitude to Alan Sola, for his guidance, invaluable advice and unwavering support.

    \addcontentsline{toc}{section}{References}
	\bibliographystyle{plain}
	\bibliography{refs}
\end{document}